\numberwithin{equation}{section} 
\numberwithin{table}{section}
\theoremstyle{plain}
\newtheorem{theorem}{Theorem}[section]
\newtheorem{proposition}[theorem]{Proposition}
\newtheorem{lemma}[theorem]{Lemma}
\theoremstyle{definition}
\newtheorem{definition}[theorem]{Definition}
\newtheorem{example}[theorem]{Example}
\newtheorem{nota}[theorem]{Notation}
\newtheorem{assume}[theorem]{Assumption}
\theoremstyle{remark}
\newtheorem{remark}[theorem]{Remark}
\newcommand{\HH}{\mathrm{H}}
\begin{document} 

\title{Holomorphic differentials of Klein four covers}

\author{Frauke M. Bleher \and Nicholas Camacho}
\address{Department of Mathematics\\University of Iowa\\
14 MacLean Hall\\Iowa City, IA 52242-1419\\ U.S.A.}
\email{frauke-bleher@uiowa.edu, nicholascamacho@live.com}
\thanks{Both authors were supported in part by NSF Grant No.\ DMS-1801328. Frauke M. Bleher is the corresponding author.}

\date{January 14, 2023}

\subjclass[2010]{Primary 11G20; Secondary 20C20, 14H05, 14G17}

\begin{abstract}
Let $k$ be an algebraically closed field of characteristic two, and let $G$ be isomorphic to $\mathbb{Z}/2\times\mathbb{Z}/2$. Suppose $X$ is a smooth projective irreducible curve over $k$ with a faithful $G$-action, and assume that the cover $X\to X/G$ is totally ramified, in the sense that it is ramified and every branch point is totally ramified. 
We study to what extent the lower ramification groups of the closed points of $X$ 
determine the isomorphism types of the indecomposable $kG$-modules and the multiplicities with which they occur as direct summands of the space $\HH^0(X,\Omega_{X/k})$ of holomorphic differentials of $X$ over $k$. 
In the case when $X/G=\mathbb{P}^1_k$, we completely determine the decomposition of $\HH^0(X,\Omega_{X/k})$ into a direct sum of indecomposable $kG$-modules. 
Moreover, we show that the isomorphism classes of indecomposable $kG$-modules that actually occur as direct summands belong to an infinite list of non-isomorphic indecomposable $kG$-modules that contain modules of arbitrarily large $k$-dimension. In particular, our results show that \cite[Theorem 6.4]{MarquesWard2018} is incorrect.
\end{abstract}

\maketitle


\section{Introduction}
\label{s:intro}

Suppose $k$ is an algebraically closed field, and $X$ is a smooth projective irreducible curve over $k$ on which a finite group $G$ acts faithfully on the right. Let $\Omega_{X/k}$ be the sheaf of relative differentials of $X$ over $k$,
and let $\HH^0(X,\Omega_{X/k})$ be the space of holomorphic differentials of $X$ over $k$. 
It is a classical problem, posed by Hecke in \cite{Hecke1928}, to determine the indecomposable $kG$-modules that occur as direct summands of $\HH^0(X,\Omega_{X/k})$ together with their multiplicities. If the characteristic of $k$ does not divide the order of $G$, this was solved by Chevalley and Weil  in \cite{ChevalleyWeil1934}.

For the remainder of the paper, we assume that the characteristic of $k$ is a prime $p$ dividing $\#G$. Many authors have studied this problem and have made some progress by either making assumptions on the group $G$ or the ramification of the cover $X\to X/G$. See \cite{ValentiniMadan1981,Kani1986,Nakajima1986,RzedowskiCVillaSMadan1996,Kock2004,KaranikolopoulosKontogeorgis2013,MarquesWard2018,BleherChinburgKontogeorgis2020,Garnek2022} for a sample of previous results. We will now briefly discuss two of these articles, as they are closely related to our results.

The authors of \cite{RzedowskiCVillaSMadan1996} studied the case when $G$ is an elementary abelian $p$-group and $X/G=\mathbb{P}^1_k$. They made additional assumptions on the ramification of the $G$-cover $X \to \mathbb{P}^1_k$ that ensured that the indecomposable $kG$-modules that occur as direct summands of $\HH^0(X,\Omega_{X/k})$ belong, up to isomorphism, to a list of $|G| - 1$ non-isomorphic indecomposable $kG$-modules that have pairwise distinct $k$-dimensions between $1$ and $|G| - 1$. 

The authors of \cite{MarquesWard2018} considered the case when $G$ is a solvable group, $X/G=\mathbb{P}^1_k$, and the $G$-cover $X \to \mathbb{P}^1_k$ decomposes into a tower of Artin-Schreier extensions and/or Kummer extensions that are in so-called ``global standard form." Moreover, if $G$ is abelian with Sylow $p$-subgroup $P$, they claimed in \cite[Theorem 6.4]{MarquesWard2018} that only a list of $(|P| - 1)\cdot |G/P|$ non-isomorphic indecomposable $kG$-modules can occur as direct summands of $\HH^0(X,\Omega_{X/k})$ and that these $kG$-modules have $k$-dimensions between $1$ and $|P| - 1$. However, this statement is incorrect, 
as we will show below.

The goal of this paper is to study the smallest case when the set of isomorphism classes of finitely generated indecomposable $kG$-modules is infinite, which occurs when the characteristic of $k$ is two and $G$ is a Klein four group, i.e. $G$ is isomorphic to $\mathbb{Z}/2\times \mathbb{Z}/2$. Note that 
the isomorphism classes of finitely generated indecomposable $kG$-modules can still be classified, using a certain one-parameter family in every even $k$-dimension (see \S\ref{s:klein4rep} for a  description). 
We assume that the Klein four cover $\pi:X\to X/G =: Y$ is totally ramified, in the sense that it is ramified and every ramification point in $X$ has $G$ as its inertia group. We study the following two questions:
\begin{enumerate}
\item To what extent do the lower ramification groups of the closed points of $X$ determine the precise $kG$-module structure of $\HH^0(X,\Omega_{X/k})$, in the sense that they give the precise decomposition of $\HH^0(X,\Omega_{X/k})$ into a direct sum of indecomposable $kG$-modules?
\item What can we say about the list of isomorphism classes of indecomposable $kG$-modules that actually occur as direct summands of $\HH^0(X,\Omega_{X/k})$?
\end{enumerate}

Our main idea is to define infinitely many operators given by certain elements in the group ring $kG$ (see Notation \ref{not:operators}) and to compare the filtrations defined by these on both the $\mathcal{O}_Y$-$G$-sheaf $\pi_*\Omega_{X/k}$ and the $kG$-module $\HH^0(X,\Omega_{X/k})$ (see Propositions \ref{prop:crucial} and \ref{prop:dimension}). We then use this relationship, which is governed by the ramification data of the cover $\pi$, to determine information about the $kG$-module structure of $\HH^0(X,\Omega_{X/k})$. When the genus of $X/G$ is arbitrary, we obtain restrictions on the indecomposable $kG$-modules and their multiplicities that can occur as direct summands of $\HH^0(X,\Omega_{X/k})$ (see Theorem \ref{thm:mostgeneral}). Moreover, we obtain sufficient conditions that lead 
to the following complete answers to Questions (1) and (2) above (see Theorem \ref{thm:onlyB1orB2B3small} and Examples \ref{ex:oops1} and \ref{ex:oops2} for more details):

\begin{theorem}
\label{thm:awkward1}
Suppose $\pi:X\to Y=X/G$ is a totally ramified Klein four cover. 
Moreover, assume that for each branch point $y\in Y$ of $\pi$, there exists a tower of function fields $k(Y)\subset k(Z_y) \subset k(X)$ such that $k(Z_y) = k(Y)(u_y)$ and $k(X)=k(Z_y)(w_y)$ are degree $2$ Artin-Schreier extensions of $k(Y)$ and $k(Z_y)$, respectively, satisfying the following two conditions:
\begin{itemize}
\item[(i)] If $z_y\in Z_y$ lies above $y$ and $x_y\in X$ lies above $z_y$, then $j_y:=-\mathrm{ord}_{z_y}(u_y)$ and $J_y:=-\mathrm{ord}_{x_y}(w_y)$ are positive odd integers such that $j_y\le J_y$ and the lower ramification groups at $x_y$ inside $G$ have jumps at $j_y$ and $J_y$, and
\item[(ii)] either $j_y=J_y$ and $(g-1)(w_y)\in k^\times$ for all non-identity elements $g \in G$, or $j_y<J_y$ and $j_y=1$.
\end{itemize}
Then the lower ramification groups of the closed points of $X$ that ramify in the cover $\pi$ 
fully determine the $kG$-module structure of $\HH^0(X,\Omega_{X/k})$. 

Moreover, the list of isomorphism classes of  indecomposable $kG$-modules that actually occur as direct summands of $\HH^0(X,\Omega_{X/k})$ for such $X$ is infinite.
\end{theorem}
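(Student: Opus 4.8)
The plan is to deduce both assertions from the structural results already established for arbitrary genus, specialized to the restrictive local configurations imposed by (i)--(ii). For the first assertion (that the lower ramification groups determine the full decomposition), I would invoke Theorem \ref{thm:mostgeneral} together with its refinement Theorem \ref{thm:onlyB1orB2B3small}, checking that hypotheses (i) and (ii) place every branch point into one of the two clean local types handled there. For the second assertion (infiniteness of the occurring list), I would exhibit, over $Y=\mathbb{P}^1_k$, an explicit sequence of covers satisfying (i)--(ii) whose spaces of holomorphic differentials contain indecomposable summands of unbounded $k$-dimension, following the constructions underlying Examples \ref{ex:oops1} and \ref{ex:oops2}.

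For the first assertion, recall that the isomorphism type of a finitely generated $kG$-module is determined by the $k$-dimensions of the kernels (equivalently the images) of the operators in Notation \ref{not:operators}; besides the norm-type elements these include the radical elements $\alpha(s-1)+\beta(t-1)$ indexed by $[\alpha:\beta]\in\mathbb{P}^1(k)$, and it is precisely the pattern of their ranks that separates the members of the one-parameter family in each even $k$-dimension. By Propositions \ref{prop:crucial} and \ref{prop:dimension}, each such rank on $\HH^0(X,\Omega_{X/k})$ is computed from the jumps of the filtration that the operator induces on $\pi_*\Omega_{X/k}$, and these jumps are governed by the lower ramification groups at the points $x_y$. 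The argument is then local at each branch point $y$, where (ii) leaves two cases. If $j_y=J_y$ and $(g-1)(w_y)\in k^\times$ for all $1\neq g\in G$, I would show the local summand is rigid and that its family parameter is pinned to a single value read off from the Artin--Schreier data. If instead $j_y<J_y$ with $j_y=1$, the small first jump forces the local summand into the rigid modules (free modules and syzygies of the trivial $kG$-module), each of which is determined by its $k$-dimension alone. In both cases every rank in Notation \ref{not:operators} becomes computable from $(j_y,J_y)$ and the ramification filtration, so Theorem \ref{thm:onlyB1orB2B3small} yields a unique decomposition.

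For the second assertion, I would take $Y=\mathbb{P}^1_k$ with a controlled set of branch points and build each tower $k(Y)\subset k(Z_y)\subset k(X)$ from two Artin--Schreier equations whose pole orders realize prescribed odd integers $j_y\le J_y$ as in (i), arranging (ii) to hold (for instance the case $j_y=J_y$ with $(g-1)(w_y)\in k^\times$, which is standard to realize over $\mathbb{P}^1_k$). Letting the conductors grow forces $g_X\to\infty$ by Riemann--Hurwitz, and applying the now-complete decomposition from the first part I would show that the multiplicity of an indecomposable summand whose $k$-dimension grows with the conductors is positive. Since indecomposable $kG$-modules of distinct $k$-dimensions are non-isomorphic, these summands yield infinitely many isomorphism classes, including ones of arbitrarily large dimension, which establishes the final assertion and contradicts \cite[Theorem 6.4]{MarquesWard2018}.

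The main obstacle is the first assertion, and within it the even-dimensional summands: ramification data sees only character- and dimension-level information, which by itself cannot distinguish the $\mathbb{P}^1(k)$-family in a fixed even dimension. The crux is therefore to prove that (i)--(ii) force each even-dimensional local contribution either to be absorbed into free summands or to have its family parameter fixed by the Artin--Schreier invariants, so that the ranks of the parametrized operators $\alpha(s-1)+\beta(t-1)$ become genuinely computable from ramification alone. Verifying this rigidity in the boundary case $j_y<J_y,\ j_y=1$, and checking that the separate local computations remain compatible when several branch points interact through the global filtration on $\pi_*\Omega_{X/k}$, is where I expect the technical effort to concentrate.
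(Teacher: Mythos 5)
Your treatment of the first assertion follows essentially the paper's route: hypothesis (ii) is translated into the local types handled by Theorem \ref{thm:onlyB1orB2B3small} (this is exactly the content of Remark \ref{rem:awkward1follows}: the case $j_y=J_y$ with $(g-1)(w_y)\in k^\times$ is condition (a), and the case $j_y<J_y$, $j_y=1$ is condition (c)), and that theorem then pins down the decomposition. One detail in your sketch is wrong, though repairable: in the case $j_y<J_y$, $j_y=1$, the local contribution is \emph{not} made of ``free modules and syzygies of the trivial $kG$-module, determined by $k$-dimension alone.'' By Theorem \ref{thm:onlyB1orB2B3small}, a point of type (c) contributes copies of $N_{2,\lambda}$ with $\lambda\in\{0,1,\infty\}$ (besides $M_{3,1}$ and $k$); these are three pairwise non-isomorphic two-dimensional modules, so dimension alone does not determine them. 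What saves the claim is that the lower ramification filtration records \emph{which} cyclic subgroup of $G$ occurs as the second jump, and that subgroup identifies $\lambda\in\{0,1,\infty\}$.

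The genuine gap is in your proof of the second assertion (infiniteness). You propose to let conductors grow and extract indecomposable summands of unbounded $k$-dimension. But under hypotheses (i)--(ii), Theorem \ref{thm:onlyB1orB2B3small} --- the very result you invoke for the first assertion --- says that \emph{every} indecomposable summand of $\HH^0(X,\Omega_{X/k})$ is isomorphic to one of $N_{2,\lambda}$, $M_{3,1}$, $k$, $kG$, hence has $k$-dimension at most $4$. Growing the conductors only increases the multiplicities of these small modules; a summand ``whose $k$-dimension grows with the conductors'' cannot exist, so your plan would fail, and it also mischaracterizes Examples \ref{ex:oops1} and \ref{ex:oops2}, which produce no large summands. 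The actual source of infiniteness in the list (\ref{eq:list1}) is the \emph{parameter}: since $k$ is algebraically closed, hence infinite, the modules $N_{2,\lambda}$ for distinct $\lambda\in k\cup\{\infty\}$ form an infinite family of pairwise non-isomorphic two-dimensional modules, and the work consists in exhibiting, for each $\lambda$, a cover satisfying (i)--(ii) in which $N_{2,\lambda}$ actually occurs. This is precisely what Examples \ref{ex:oops1} and \ref{ex:oops2} do: branch points in $B_1$ with prescribed $\lambda_y=\lambda_0\in k-\{0,1\}$ (where $\lambda_y$ is read off from the Artin--Schreier data $b_{y,0}$), and branch points in $B_3$ realizing $\lambda\in\{0,1,\infty\}$. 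The unbounded-dimension phenomenon you have in mind belongs to Theorem \ref{thm:awkward2}, proved via Theorem \ref{thm:P1main} and Example \ref{ex:KatzGabber}, where hypothesis (ii) is dropped; under (ii) it is impossible.
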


Theorem \ref{thm:awkward1} includes the case in \cite{RzedowskiCVillaSMadan1996} when $p=2=n$ and extends it from $Y=\mathbb{P}^1_k$ to an arbitrary smooth projective curve $Y$ (see Example \ref{ex:Nickthesis}). As a consequence of Theorem \ref{thm:awkward1} (and, more generally, Theorem \ref{thm:onlyB1orB2B3small}), we will show in Examples \ref{ex:oops1} and \ref{ex:oops2} that \cite[Theorem 6.4]{MarquesWard2018} is not correct; see also Remark \ref{rem:wrongMarquesWard}.

Note that condition (i) in Theorem \ref{thm:awkward1} is satisfied for every branch point $y\in Y$ (see Remark \ref{rem:twocases}), but condition (ii) may not be. In Theorem \ref{thm:onlyB1orB2B3small}, we will extend condition (ii) to include one additional case, which is a bit more technical to state but for which we can also fully determine the $kG$-module structure of $\HH^0(X,\Omega_{X/k})$ without any assumptions on the genus of $X/G$ (see Remark \ref{rem:awkward1follows} and condition (b) in Theorem \ref{thm:onlyB1orB2B3small}). We will show in Examples \ref{ex:oops1} and \ref{ex:oops2}  that the list of isomorphism classes of  indecomposable $kG$-modules that occur as direct summands of $\HH^0(X,\Omega_{X/k})$ in the situation of Theorem \ref{thm:awkward1} (and, more generally, Theorem \ref{thm:onlyB1orB2B3small}) for various $X$ is infinite, and we write down the precise list. Moreover, we give examples in which $\HH^0(X,\Omega_{X/k})$ is a direct sum of an arbitrary finite number of non-isomorphic indecomposable $kG$-modules.

In the case when $X/G=\mathbb{P}^1_k$, we obtain the following complete answers to Questions (1) and (2) above, without any additional assumptions on the ramification behavior (see Theorem \ref{thm:P1main} and Example \ref{ex:KatzGabber} for more details):

\begin{theorem}
\label{thm:awkward2}
Suppose $\pi:X\to \mathbb{P}^1_k=X/G$ is a totally ramified Klein four $G$-cover. Then the lower ramification groups of the closed points of $X$ that ramify in the cover $\pi$ fully determine the $kG$-module structure of $\HH^0(X,\Omega_{X/k})$. More precisely, the isomorphism classes of the indecomposable $kG$-modules that actually occur as direct summands and their multiplicities in $\HH^0(X,\Omega_{X/k})$ can be described explicitly. Furthermore, the list of these isomorphism classes is infinite and contains indecomposable $kG$-modules of arbitrarily large finite $k$-dimension.
\end{theorem}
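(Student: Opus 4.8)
The plan is to deduce this statement from the more precise Theorem \ref{thm:P1main}, whose proof rests on translating the $kG$-module structure of $\HH^0(X,\Omega_{X/k})$ into a family of numerical invariants that can be read off the ramification data once $Y=\mathbb{P}^1_k$. Writing $G=\langle s,t\rangle$ and $x=s-1$, $y=t-1$ in $kG$, the point of the classification recalled in \S\ref{s:klein4rep} is that every nonzero element of the pencil $\langle x,y\rangle = J(kG)/J(kG)^2$ gives a square-zero operator $u_\lambda = x+\lambda y$ (together with the operator at $\infty$), and that the collection of ranks $\{\dim_k u_\lambda M : \lambda\in\mathbb{P}^1_k\}$ --- i.e. the generic and special Jordan types of $M$ --- is a complete invariant: it determines the multiplicity of each indecomposable summand of $M$, including the parameter of the even-dimensional (band) modules, which all share the same radical and socle dimensions and are therefore invisible to $x$, $y$, $x+y$ alone. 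These $u_\lambda$ are the operators introduced in Notation \ref{not:operators}.

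First I would use Propositions \ref{prop:crucial} and \ref{prop:dimension} to express each rank $\dim_k u_\lambda \HH^0(X,\Omega_{X/k})$ through the filtration that $u_\lambda$ induces on the $\mathcal{O}_Y$-$G$-sheaf $\pi_*\Omega_{X/k}$. Concretely, the kernel and image of $u_\lambda$ on global sections are the global sections of subsheaves of $\pi_*\Omega_{X/k}$ whose local structure at each branch point is governed by the jumps of the lower ramification filtration there, and Proposition \ref{prop:dimension} is what converts these local contributions into dimension counts. This step is purely local-to-global and works over any $Y$; combined with Theorem \ref{thm:mostgeneral} it already yields the genus-independent restrictions on which indecomposables and multiplicities may occur, and it is the mechanism by which the lower ramification groups enter.

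The essential use of $Y=\mathbb{P}^1_k$ comes next. Because every vector bundle on $\mathbb{P}^1_k$ splits into line bundles (Birkhoff--Grothendieck) and $\HH^0(\mathbb{P}^1_k,\mathcal{O}(d))$ has dimension $\max(d+1,0)$, the global sections of each of the subsheaves produced above --- for \emph{every} $\lambda\in\mathbb{P}^1_k$, not merely for the three values corresponding to the intermediate $\mathbb{Z}/2$-subcovers --- are directly computable from their degrees, which are in turn explicit expressions in the ramification jumps. In particular $\HH^0(\mathbb{P}^1_k,\Omega_{\mathbb{P}^1_k/k})=0$, so the genus-dependent ambiguity that obstructs the general case (and forces the extra hypotheses in Theorems \ref{thm:awkward1} and \ref{thm:onlyB1orB2B3small}) disappears. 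Feeding the resulting ranks $\{\dim_k u_\lambda\HH^0(X,\Omega_{X/k})\}_{\lambda}$ into the dictionary of the first paragraph then pins down every indecomposable summand and its multiplicity, including the band parameters, which proves the first two assertions. For the final assertion I would exhibit explicit totally ramified Klein four covers of $\mathbb{P}^1_k$ --- for instance the Artin--Schreier families underlying Example \ref{ex:KatzGabber} --- with ramification jumps that can be chosen arbitrarily large, and check that the explicit decomposition just obtained then contains indecomposable summands whose $k$-dimension grows with the jumps; since infinitely many admissible jump data yield pairwise non-isomorphic summands of unbounded dimension, the list is infinite and unbounded.

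The main obstacle is the claim buried in the third paragraph: that the family of ranks really does determine the band parameters, i.e. that the variation of $\dim_k u_\lambda\HH^0(X,\Omega_{X/k})$ with $\lambda$ matches the generic/special Jordan-type stratification of an honest direct sum of indecomposables. This is exactly where the unconditional computability of $\HH^0$ for all $\lambda$ on $\mathbb{P}^1_k$ is indispensable: for higher-genus $Y$ only the three special operators carry ramification-controlled information, so band modules with parameter outside $\{0,1,\infty\}$ cannot be detected, whereas on $\mathbb{P}^1_k$ the rank is pinned down simultaneously for all $\lambda$ and the stratification is forced.
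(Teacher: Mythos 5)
Your overall reduction --- deduce the statement from Theorem \ref{thm:P1main} and get the infinitude and unboundedness of the list from explicit covers as in Example \ref{ex:KatzGabber} --- is exactly the paper's (one-line) proof of this theorem, and your observation that $g(\mathbb{P}^1_k)=0$ eliminates the free summands is correct. The genuine gap is the mechanism you propose for establishing Theorem \ref{thm:P1main} itself. Your central claim, that the collection of ranks $\{\dim_k u_\lambda M\}_{\lambda\in\mathbb{P}^1_k}$ of the square-zero operators of Notation \ref{not:operators} is a complete invariant of a finite-dimensional $kG$-module ``including the parameter\emph{s} of the band modules,'' is false. Since $u_\lambda^2=0$, the Jordan type of $u_\lambda$ is just its rank; on $N_{2n,\mu}$ the rank of $u_\lambda$ is $n$ for every $\lambda\neq\mu$ and $n-1$ for $\lambda=\mu$. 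Hence $N_{2,\mu}\oplus N_{6,\mu}$ and $N_{4,\mu}\oplus N_{4,\mu}$ have the same dimension, identical rank functions (generic value $4$, dropping to $2$ at the single point $\lambda=\mu$), and in fact identical filtration data $\dim_k M^{(i+1)}/M^{(i)}$ for \emph{every} $(a:b)$ by Table \ref{tab:succdims}, yet they are not isomorphic. (Similarly $M_{2n+1,1}$ and $M_{2n+1,2}$ have equal constant rank functions; those two are at least separated by the socle, which the paper's filtration does see.) Moreover, ranks of arbitrary elements of $kG$ give nothing more, since $\mathrm{rad}^2(kG)=\mathrm{soc}(kG)$ annihilates every non-free indecomposable. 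This is precisely the deficiency the paper flags after Theorem \ref{thm:mostgeneral}: the filtration dimensions, known for all $(a:b)$, determine $\varepsilon_{1,\lambda}$, $\varepsilon_2$, $\varepsilon_3+\varepsilon_4$, $\varepsilon_5$, but \emph{not} the sizes $\ell_h,n_i,n'_j$. Because the invariant itself is inadequate, no sheaf-theoretic computation of it --- Birkhoff--Grothendieck splittings, degree counts on $\mathbb{P}^1_k$, or otherwise --- can close the gap; and the ambiguity is not vacuous, since Theorem \ref{thm:P1main} produces $N_{4,\mu}^{\oplus 2}$ from one Harbater--Katz--Gabber cover (a single branch point with $m_y=M_y=17$, $\delta_y=2$) and $N_{2,\mu}\oplus N_{6,\mu}$ from a two-branch-point cover with matching multiplicities of $M_{3,1}$ and $k$.

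Your diagnosis of where $Y=\mathbb{P}^1_k$ enters is also off, which matters for any attempted repair. Propositions \ref{prop:crucial}, \ref{prop:dimension}, \ref{prop:totallyramified}, \ref{prop:dims} and Theorem \ref{thm:mostgeneral} already compute the filtration dimensions for \emph{every} $\lambda\in k\cup\{\infty\}$, not just $0,1,\infty$, over a base of arbitrary genus; band modules with arbitrary parameter are detected in any genus, and the genus enters those results only through the free part $\varepsilon_5=g(Y)$. What the paper actually uses genus zero for is quite different: it writes down an explicit $k$-basis of $\HH^0(X,\Omega_{X/k})$ (Definition \ref{def:basisoverP1}, Lemma \ref{lem:basisoverP1}) built from Laurent monomials in uniformizers times $1$, $u_y$, $v_y$, $w_y(j)$, computes the matrices of $\sigma_y-1$ and $\tau_y-1$ on this basis, and then reads off the Jordan-block structure of the matrices $C_{\lambda_y}$ and $D_{\lambda_y}$ (via ranks of powers of $D_{\lambda_y}-\lambda_y\mathbf{1}_{n_y}$, resp.\ of $D_{\lambda_y}^{-1}C_{\lambda_y}$). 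It is this explicit linear algebra on global differentials, not dimension counting of kernels, that pins down $\ell_y$, $a_{y,1}$, $a_{y,2}$ in Theorem \ref{thm:P1main}. To make your argument work you would either have to carry out such an explicit basis computation, or replace the rank function by an invariant that is genuinely complete for Klein four modules (for instance, ranks of maps induced by one operator on the kernel or cokernel of another) and show that the geometry computes it; as written, the ``dictionary'' step of your third paragraph cannot be carried out.
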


It is an interesting question whether the set of isomorphism classes of indecomposable modules that arises in Theorem \ref{thm:awkward2} changes when one drops the assumption that $X/G$ is isomorphic to $\mathbb{P}^1_k$. We will give a sufficient criterion for this set to not change (see Remarks \ref{rem:candomore} and \ref{rem:sodumber!}).

\medskip

The paper is organized as follows. In \S\ref{s:klein4}, we relate filtrations of the $\mathcal{O}_Y$-$G$-sheaf $\pi_*\Omega_{X/k}$ to filtrations of the $kG$-module $\HH^0(X,\Omega_{X/k})$ for arbitrary Klein four covers (see Propositions \ref{prop:crucial} and \ref{prop:dimension}). We also provide the detailed calculations needed for totally ramified Klein four covers in Propositions \ref{prop:totallyramified} and \ref{prop:dims}. We  use this in Theorem \ref{thm:mostgeneral} to obtain restrictions on the indecomposable $kG$-modules and their multiplicities that can occur as direct summands of $\HH^0(X,\Omega_{X/k})$. In Theorem \ref{thm:onlyB1orB2B3small}, we give sufficient conditions under which Theorem \ref{thm:mostgeneral} leads to a full determination of the $kG$-module structure of $\HH^0(X,\Omega_{X/k})$. Moreover, we show in Examples \ref{ex:oops1} and \ref{ex:oops2} that every member of the full list of possible indecomposable $kG$-modules in Theorem \ref{thm:mostgeneral} actually occurs as a direct summand of $\HH^0(X,\Omega_{X/k})$ for various $X$. 
In \S\ref{s:KleinfourP1}, we focus on the case when $X/G=\mathbb{P}^1_k$. We determine in Theorem \ref{thm:P1main} the full $kG$-module structure of $\HH^0(X,\Omega_{X/k})$ in this case. Moreover, we show in Example \ref{ex:KatzGabber} that every member of the full list of possible indecomposable $kG$-modules in Theorem \ref{thm:P1main} actually occurs as a direct summand of $\HH^0(X,\Omega_{X/k})$ for various $X$. 

Part of this paper is the Ph.D. thesis of the second author under the supervision of the first (see \cite{CamachoThesis2021}). We would like to thank the referee for helpful comments and suggestions.


\section{Klein four covers in characteristic two}
\label{s:klein4}

Throughout this paper, we make the following assumptions.

\begin{assume}
\label{ass:general}
Let $k$ be an algebraically closed field of characteristic 2, and let $\pi:X \to Y$ be a Galois cover of smooth projective curves over $k$ with Galois group $G$ isomorphic to $\mathbb{Z}/2\times \mathbb{Z}/2$. Let $L=k(X)$ and $K=k(Y)$. Write $L = K(u,v)$ where 
\begin{equation}
\label{eq:field}
u^2-u=p, \quad v^2-v=q
\end{equation}
for certain $p,q\in K$ such that none of $p,q,p-q$ is of the form $s^2-s$ for any $s\in K$.
Write $G=\{1,\sigma,\tau,\sigma\tau\} = \langle \sigma, \tau\rangle$ such that
\begin{equation}
\label{eq:Gaction}
\sigma(u)=u,\tau(u)=u+1 \quad \mbox{and} \quad \sigma(v)=v+1,\tau(v)=v.
\end{equation}
\end{assume}

We have three intermediate fields between $K$ and $L$, corresponding to three smooth projective curves $Z_0$, $Z_1$, $Z_2$ over $k$ that are Galois covers of $Y$ of degree 2. Their function fields are given by
\begin{equation}
\label{eq:3curves}
k(Z_0)=K(u),\quad k(Z_1)=K(v),\quad k(Z_2)=K(u+v).
\end{equation}
We have
$$k(Z_0)=L^{\langle \sigma\rangle},\quad k(Z_1)=L^{\langle \tau\rangle},\quad k(Z_2)=L^{\langle \sigma\tau\rangle}.$$

We need the following notation.
\begin{nota}
\label{not:operators}
For each closed point $(a:b)\in\mathbb{P}^1_k$, define the following elements in the group ring $kG$:
\begin{eqnarray*}
f_0&:=&1,\\
f_1&:=&\sigma-1\quad \mbox{and} \quad f_1'\;:=\;\tau-1,\\
f_2=f_2^{(a:b)}&:=&
\left\{\begin{array}{cl}
(\frac{a}{b})(\sigma-1)+(\tau-1) & \mbox{ if } b\ne 0,\\
(\sigma-1) & \mbox{ if } b=0,
\end{array}\right.\\
f_3&:=&(\sigma-1)(\tau-1),\\
f_4&:=&0.
\end{eqnarray*}
We view these elements as operators acting on $k(X)$, on $kG$-modules, and on $\pi_*\mathcal{F}$ for certain coherent $\mathcal{O}_X$-$G$-modules $\mathcal{F}$ (see \cite[Section 2]{BleherChinburgKontogeorgis2020} for more details). For $i\in\{0,2,3,4\}$ a superscript $(i)$ denotes the kernel of the action of $f_i$, whereas a superscript $(1)$ denotes the intersection of the kernels of the actions of $f_1$ and $f_1'$. Since the kernel of the action of $f_2=f_2^{(a:b)}$ may be different for different choices of $(a:b)$ in $\mathbb{P}^1_k$, we often add $(a:b)$ to the superscript $(2)$.
\end{nota}

For every $(a:b)\in \mathbb{P}^1_k$, we obtain a filtration 
$$0=kG^{(0)} \subset kG^{(1)} \subset kG^{(2),(a:b)} \subset kG^{(3)} \subset kG^{(4)}=kG.$$
Notice that $kG^{(1)}$ is the socle of $kG$ and $kG^{(3)}$ is the radical of $kG$, and that both $kG^{(1)}$ and $kG/kG^{(3)}$ have $k$-dimension one. Moreover, $kG^{(3)}/kG^{(1)}$ has $k$-dimension two, and all its proper non-trivial $k$-subspaces are of the form $kG^{(2),(a:b)}/kG^{(1)}$ for some closed point $(a:b)\in \mathbb{P}^1_k$. Hence, as $(a:b)$ ranges over all closed points in $\mathbb{P}^1_k$, these filtrations range over all possible composition series of $kG$. Note that for different $(a:b)\in \mathbb{P}^1_k$, the $kG$-modules $kG^{(2),(a:b)}$ are not isomorphic.

The main idea of this paper is to relate, for all $(a:b)\in \mathbb{P}^1_k$, the filtration of the $\mathcal{O}_Y$-$G$-sheaf $\pi_*\Omega_{X/k}$ given by $\{(\pi_*\Omega_{X/k})^{(i)}\}_{i=0}^4$ to the filtration of the $kG$-module $\HH^0(X,\Omega_{X/k})$ given by $\{\HH^0(X,\Omega_{X/k})^{(i)}\}_{i=0}^4$ and to use this relationship to determine as much as possible of the $kG$-module structure of $\HH^0(X,\Omega_{X/k})$.

The following remark is important for this comparison.

\begin{remark}
\label{rem:important}
By the normal basis theorem, $k(X)$ is a free rank one module for the group ring $k(Y)G$. Fix a closed point $(a:b)\in \mathbb{P}^1_k$, and define $\widetilde{f}_i:=f_i$ for $i\in\{0,3\}$, $\widetilde{f}_2:=f_2^{(a:b)}$, and $\widetilde{f}_1:=f_1$ if $b\ne 0$ and $\widetilde{f}_1:=f_1'$ if $b=0$. Then, for $0\le i\le 3$, the map
\begin{equation}
\label{eq:map!}
\widetilde{f}_i: \quad \frac{k(X)^{(i+1)}}{k(X)^{(i)}} \to k(Y)
\end{equation}
given by multiplication by $\widetilde{f}_i$ is a $k(Y)G$-module isomorphism. Let $h_1,h_2,h_3,h_4\in k(X)$ be such that
\begin{equation}
\label{eq:basiscondition}
h_{i+1}\in k(X)^{(i+1)} \quad\mbox{and}\quad \widetilde{f}_i\,h_{i+1}\in k^\times\quad\mbox{for $0\le i\le 3$.}
\end{equation}
Then $\{h_1,\ldots,h_{i+1}\}$ is a $k(Y)$-basis of $k(X)^{(i+1)}$ and $\mathrm{ord}_y(\widetilde{f}_i\,h_{i+1})=0$ for all points $y\in Y$. 
Notice that the latter condition is used when adapting the proof of \cite[Proposition 4.1]{BleherChinburgKontogeorgis2020} to our situation (see Proposition \ref{prop:crucial} below).
\end{remark}

The next result is proved using similar arguments as in the proof of \cite[Proposition 4.1]{BleherChinburgKontogeorgis2020}; we will therefore omit its proof.

\begin{proposition}
\label{prop:crucial}
Under Assumption $\ref{ass:general}$, let $\mathcal{D}_{X/Y}^{-1}$ be the inverse different of $X$ over $Y$, and fix a closed point $(a:b)\in \mathbb{P}^1_k$. For $0\le i\le 3$, there exist divisors $D_i$ on $Y$, which are uniquely determined by $\pi_*\mathcal{D}_{X/Y}^{-1}$, such that there are isomorphisms of $\mathcal{O}_Y$-$G$-modules
$$(\pi_*\Omega_{X/k})^{(i+1)}/(\pi_*\Omega_{X/k})^{(i)} \cong \Omega_{Y/k}\otimes_{\mathcal{O}_Y}\mathcal{O}_Y(D_i).$$
More precisely, write $D_i=\sum_{y\in Y} d_{y,i} y$. For each $y\in Y$, let $h_1=h_1(y)$, $h_2 = h_2(y)$, $h_3 = h_3(y)$, and $h_4 = h_4(y)$ be elements of $k(X)$ satisfying $(\ref{eq:basiscondition})$ from Remark $\ref{rem:important}$. Then
$$d_{y,i} = - \mathrm{min}\{\mathrm{ord}_y(\lambda_{i+1}) \; : \; \lambda_1h_1 +\cdots+ \lambda_{i+1} h_{i+1} \in (\pi_*\mathcal{D}_{X/Y}^{-1})_y \mbox{ for some } \lambda_1,\ldots,\lambda_{i+1} \in k(Y)\}. $$
Moreover, we have the following for $x\in X$ and $y=\pi(x)$:
\begin{itemize}
\item[(i)] If $x$ is not a ramification point of $\pi$ then $d_{y,i}=0$.
\item[(ii)] If $x$ is totally ramified and if $\mathrm{ord}_x(h_j(y))$ are pairwise distinct modulo $4$ for $j\in\{1,2,3,4\}$ then 
\begin{equation}
\label{eq:divisor}
d_{y,i}=\left\lfloor \frac{d_{x/y} +\mathrm{ord}_x(h_{i+1}(y))}{4}\right\rfloor
\end{equation}
where $d_{x/y}$ is the different exponent of $x$ over $y$.
\end{itemize}
\end{proposition}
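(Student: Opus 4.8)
The plan is to follow the adaptation of \cite[Proposition 4.1]{BleherChinburgKontogeorgis2020} indicated by the authors, reducing everything to a local analysis of the $f_i$-filtration on $\pi_*\mathcal{D}_{X/Y}^{-1}$. The starting point is the canonical identification $\Omega_{X/k}\cong \pi^*\Omega_{Y/k}\otimes_{\mathcal{O}_X}\mathcal{D}_{X/Y}^{-1}$, valid for the finite separable cover $\pi$ of smooth curves. Applying $\pi_*$ and the projection formula (with $\Omega_{Y/k}$ invertible) yields a $G$-equivariant isomorphism $\pi_*\Omega_{X/k}\cong \Omega_{Y/k}\otimes_{\mathcal{O}_Y}\pi_*\mathcal{D}_{X/Y}^{-1}$. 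Since $\Omega_{Y/k}$ carries the trivial $G$-action, this isomorphism carries the filtration $\{(\pi_*\Omega_{X/k})^{(i)}\}$ to $\Omega_{Y/k}$ tensored with the filtration $\{(\pi_*\mathcal{D}_{X/Y}^{-1})^{(i)}\}$, so it suffices to exhibit divisors $D_i$ with $(\pi_*\mathcal{D}_{X/Y}^{-1})^{(i+1)}/(\pi_*\mathcal{D}_{X/Y}^{-1})^{(i)}\cong \mathcal{O}_Y(D_i)$.

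First I would identify these graded pieces. Here $\pi_*\mathcal{D}_{X/Y}^{-1}$ is a coherent $\mathcal{O}_Y$-submodule of the constant sheaf $k(X)=\pi_*k(X)$, and $(\pi_*\mathcal{D}_{X/Y}^{-1})^{(i)}=\pi_*\mathcal{D}_{X/Y}^{-1}\cap k(X)^{(i)}$. By Remark \ref{rem:important}, multiplication by $\widetilde f_i$ is a $k(Y)G$-isomorphism $k(X)^{(i+1)}/k(X)^{(i)}\xrightarrow{\sim}k(Y)$. Because $(\pi_*\mathcal{D}_{X/Y}^{-1})^{(i)}=(\pi_*\mathcal{D}_{X/Y}^{-1})^{(i+1)}\cap k(X)^{(i)}$, the $i$-th graded piece injects into $k(X)^{(i+1)}/k(X)^{(i)}$ and thus, via $\widetilde f_i$, becomes a coherent torsion-free rank-one $\mathcal{O}_Y$-submodule of the constant sheaf $k(Y)$. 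Every such submodule is $\mathcal{O}_Y(D_i)$ for a unique divisor $D_i$, which gives both existence and uniqueness; the $G$-action on $k(Y)$ being trivial makes the isomorphism automatically $G$-equivariant, and twisting by $\Omega_{Y/k}$ produces the asserted form.

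Next I would read off $d_{y,i}$ stalk-by-stalk. The divisor $D_i=\sum_y d_{y,i}\,y$ is recorded by the valuation of $\mathcal{O}_Y(D_i)_y$ inside $k(Y)$, so $-d_{y,i}$ is the least $\mathrm{ord}_y$ occurring in $\widetilde f_i\bigl((\pi_*\mathcal{D}_{X/Y}^{-1})_y\cap k(X)^{(i+1)}\bigr)$. Writing a general element of $k(X)^{(i+1)}$ in the $k(Y)$-basis of Remark \ref{rem:important} as $g=\lambda_1h_1+\cdots+\lambda_{i+1}h_{i+1}$, and using that $h_j\in k(X)^{(j)}\subseteq k(X)^{(i)}=\ker\widetilde f_i$ for $j\le i$ together with $\widetilde f_i h_{i+1}\in k^\times$, one gets $\widetilde f_i g=\lambda_{i+1}\,(\widetilde f_i h_{i+1})$; hence up to the unit $\widetilde f_i h_{i+1}$ the image is recorded precisely by the coefficient $\lambda_{i+1}$. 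Since membership $g\in(\pi_*\mathcal{D}_{X/Y}^{-1})_y$ is exactly the condition in the statement, taking the minimum of $\mathrm{ord}_y(\lambda_{i+1})$ over all admissible $g$ gives the displayed formula for $d_{y,i}$.

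Finally I would settle (i) and (ii) by a valuation computation at each $x$ with $\pi(x)=y$. For (i), if $x$ is unramified the semilocal algebra of $X$ over $\mathcal{O}_{Y,y}$ is étale, isomorphic to $\mathcal{O}_{Y,y}G$, the inverse different is trivial, and the $h_j$ may be taken as a group-ring basis, forcing $d_{y,i}=0$. For (ii), total ramification gives $e=|G|=4$, so $\mathrm{ord}_x(\lambda_j h_j)=4\,\mathrm{ord}_y(\lambda_j)+\mathrm{ord}_x(h_j)$; the hypothesis that the $\mathrm{ord}_x(h_j)$ are pairwise distinct modulo $4$ puts the summand valuations in distinct residue classes modulo $4$, so no cancellation occurs and $\mathrm{ord}_x(g)=\min_{1\le j\le i+1}\bigl(4\,\mathrm{ord}_y(\lambda_j)+\mathrm{ord}_x(h_j)\bigr)$. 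The condition $\mathrm{ord}_x(g)\ge -d_{x/y}$ then decouples into the separate inequalities $4\,\mathrm{ord}_y(\lambda_j)+\mathrm{ord}_x(h_j)\ge -d_{x/y}$; for $j=i+1$ this forces $\mathrm{ord}_y(\lambda_{i+1})\ge -\lfloor (d_{x/y}+\mathrm{ord}_x(h_{i+1}))/4\rfloor$, while the remaining $\lambda_j$ can be chosen with large enough valuation to impose no further constraint, so the bound is attained and $d_{y,i}=\lfloor (d_{x/y}+\mathrm{ord}_x(h_{i+1}))/4\rfloor$. The main obstacle is precisely this no-cancellation step in (ii): one must verify that the distinct-residues-mod-$4$ hypothesis genuinely prevents any cancellation among the summands of $g$ and that the minimizing $\lambda_{i+1}$ is simultaneously realizable with admissible choices of the other coefficients. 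By contrast, the global portion (existence and uniqueness of $D_i$ and the $G$-equivariance) is comparatively formal once the identification $\pi_*\Omega_{X/k}\cong\Omega_{Y/k}\otimes_{\mathcal{O}_Y}\pi_*\mathcal{D}_{X/Y}^{-1}$ is in hand.
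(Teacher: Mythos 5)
Your proposal is correct and takes essentially the approach the paper intends: the paper omits the proof, deferring to an adaptation of \cite[Proposition 4.1]{BleherChinburgKontogeorgis2020} via the basis of Remark \ref{rem:important}, and your argument — reducing to $\pi_*\mathcal{D}_{X/Y}^{-1}$ by the projection formula, identifying the graded pieces with fractional ideal subsheaves of the constant sheaf $k(Y)$ via multiplication by $\widetilde{f}_i$, and then computing stalks (freeness over $\mathcal{O}_{Y,y}G$ at unramified points, the no-cancellation valuation argument at totally ramified points) — is precisely that adaptation.
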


The next result is proved using similar arguments as in the proof of \cite[Lemma 4.2]{BleherChinburgKontogeorgis2020}; we will therefore omit its proof.

\begin{proposition}
\label{prop:dimension}
Under Assumption $\ref{ass:general}$, fix a closed point $(a:b)\in \mathbb{P}^1_k$, and let $D_0,\ldots,D_3$ be the divisors from Proposition $\ref{prop:crucial}$. Suppose $D_3=0$ and $\mathrm{deg}(D_i)>0$ for $0\le i \le 2$.
Then, for $0\le i\le 3$, there are isomorphisms of $kG$-modules
$$\HH^0(X,\Omega_{X/k})^{(i+1)}/\HH^0(X,\Omega_{X/k})^{(i)} \cong\HH^0(Y,\Omega_{Y/k}\otimes_{\mathcal{O}_Y}\mathcal{O}_Y(D_i)).$$
\end{proposition}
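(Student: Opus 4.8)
The plan is to deduce the statement about global sections from the sheaf-level isomorphisms of Proposition~\ref{prop:crucial} by showing that, for each $i$, passing to cohomology commutes with the relevant filtration quotient. Concretely, for $0\le i\le 3$ consider the short exact sequence of $\mathcal{O}_Y$-$G$-modules
\begin{equation*}
0\to (\pi_*\Omega_{X/k})^{(i)}\to (\pi_*\Omega_{X/k})^{(i+1)}\to (\pi_*\Omega_{X/k})^{(i+1)}/(\pi_*\Omega_{X/k})^{(i)}\to 0,
\end{equation*}
and apply the long exact sequence in sheaf cohomology. Since $\pi$ is a finite (hence affine) morphism, $\HH^0(Y,\pi_*\Omega_{X/k})=\HH^0(X,\Omega_{X/k})$, and the subsheaves $(\pi_*\Omega_{X/k})^{(i)}$ are cut out by the $G$-equivariant operators $\widetilde f_i$ of Remark~\ref{rem:important}, so their global sections recover $\HH^0(X,\Omega_{X/k})^{(i)}$ compatibly with the $kG$-action. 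Thus the desired isomorphism
\begin{equation*}
\HH^0(X,\Omega_{X/k})^{(i+1)}/\HH^0(X,\Omega_{X/k})^{(i)}\cong \HH^0\bigl(Y,\Omega_{Y/k}\otimes_{\mathcal{O}_Y}\mathcal{O}_Y(D_i)\bigr)
\end{equation*}
will follow once I know the connecting maps in the long exact sequences vanish, i.e. once I control the relevant $\HH^1$ terms.

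The heart of the argument is therefore a vanishing input, and this is where the hypotheses $D_3=0$ and $\deg(D_i)>0$ for $0\le i\le 2$ enter. First I would use the hypothesis $D_3=0$ together with the identification of Proposition~\ref{prop:crucial} to get $(\pi_*\Omega_{X/k})^{(4)}/(\pi_*\Omega_{X/k})^{(3)}\cong \Omega_{Y/k}$; more importantly, I expect $D_3=0$ to force the top filtration step to behave well so that the cohomology of $\pi_*\Omega_{X/k}$ is ``concentrated'' in the way needed. Then for $0\le i\le 2$, the condition $\deg(D_i)>0$ gives $\deg(\Omega_{Y/k}\otimes\mathcal{O}_Y(D_i))=\deg(\Omega_{Y/k})+\deg(D_i)>2g_Y-2$, so by Serre duality $\HH^1(Y,\Omega_{Y/k}\otimes\mathcal{O}_Y(D_i))\cong\HH^0(Y,\mathcal{O}_Y(-D_i))^\vee=0$, since $\mathcal{O}_Y(-D_i)$ has negative degree on the irreducible curve $Y$. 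This vanishing of the $\HH^1$ of each graded piece is what lets me build up the claim inductively on $i$: starting from the bottom and using the long exact sequence, the surjectivity of $\HH^0$ onto each quotient and the injectivity at the next stage both reduce to these $\HH^1$-vanishing statements.

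Carrying this out, I would argue by induction on $i$ to establish simultaneously that $\HH^1\bigl(Y,(\pi_*\Omega_{X/k})^{(i+1)}\bigr)$ vanishes (or is at least unchanged as $i$ grows, so that the connecting homomorphisms are zero) and that the induced sequence on $\HH^0$ remains short exact. The base case uses $(\pi_*\Omega_{X/k})^{(1)}\cong\Omega_{Y/k}\otimes\mathcal{O}_Y(D_0)$ directly, and the inductive step splices the long exact sequence of the displayed short exact sequence with the vanishing $\HH^1(Y,\Omega_{Y/k}\otimes\mathcal{O}_Y(D_i))=0$ just established. Throughout, I must check that every map is $G$-equivariant, so that the resulting isomorphisms of $k$-vector spaces are in fact isomorphisms of $kG$-modules; this is automatic because the filtration, the short exact sequences, and the cohomology functors are all $G$-equivariant by construction.

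The main obstacle I anticipate is bookkeeping at the top step $i=3$, where $D_3=0$ means $\deg(\Omega_{Y/k}\otimes\mathcal{O}_Y(D_3))=2g_Y-2$ and the clean Serre-duality vanishing is no longer available; here I expect to rely on the precise structure of the differents $\pi_*\mathcal{D}_{X/Y}^{-1}$ and the hypothesis $D_3=0$ to argue that the connecting map out of the top quotient still vanishes, or to reinterpret the top quotient $\HH^0(Y,\Omega_{Y/k})$ as exactly the cokernel of the preceding stage. This is precisely the point where I would lean on the analogue in \cite[Lemma 4.2]{BleherChinburgKontogeorgis2020}, adapting its handling of the extremal graded piece to the Klein four setting with the operators $\widetilde f_i$ from Remark~\ref{rem:important}.
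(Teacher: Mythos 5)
Your strategy is exactly the intended one (the paper omits this proof, pointing to the argument of \cite[Lemma 4.2]{BleherChinburgKontogeorgis2020}, which is precisely this long-exact-sequence-plus-Serre-duality induction), and the core steps you write down are correct: left exactness of global sections identifies $\HH^0\bigl(Y,(\pi_*\Omega_{X/k})^{(i)}\bigr)$ with $\HH^0(X,\Omega_{X/k})^{(i)}$ compatibly with the $G$-action, Serre duality gives $\HH^1\bigl(Y,\Omega_{Y/k}\otimes_{\mathcal{O}_Y}\mathcal{O}_Y(D_i)\bigr)\cong \HH^0\bigl(Y,\mathcal{O}_Y(-D_i)\bigr)^\vee=0$ for $0\le i\le 2$, and the induction up the filtration (base case $(\pi_*\Omega_{X/k})^{(1)}\cong \Omega_{Y/k}\otimes_{\mathcal{O}_Y}\mathcal{O}_Y(D_0)$, then splicing long exact sequences) yields $\HH^1\bigl(Y,(\pi_*\Omega_{X/k})^{(i)}\bigr)=0$ for $1\le i\le 3$.

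The one thing to repair is your final paragraph: the ``obstacle'' you anticipate at $i=3$ does not exist, and the fix is already contained in your own induction. The connecting homomorphism whose vanishing you need at step $i$ is
$$\HH^0\bigl(Y,\Omega_{Y/k}\otimes_{\mathcal{O}_Y}\mathcal{O}_Y(D_i)\bigr)\longrightarrow \HH^1\bigl(Y,(\pi_*\Omega_{X/k})^{(i)}\bigr),$$
so its target is the $\HH^1$ of the \emph{subsheaf} at stage $i$, not of the $i$-th graded quotient. For $i=3$ this target is $\HH^1\bigl(Y,(\pi_*\Omega_{X/k})^{(3)}\bigr)$, which vanishes by the induction using only $\deg(D_i)>0$ for $0\le i\le 2$. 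Hence no Serre-duality statement for $D_3$, no appeal to the structure of $\pi_*\mathcal{D}_{X/Y}^{-1}$, and no special treatment of the extremal piece is needed; the hypothesis $D_3=0$ enters only to identify the top quotient with $\Omega_{Y/k}$ itself (so that, in the applications, the top graded piece of $\HH^0(X,\Omega_{X/k})$ is $\HH^0(Y,\Omega_{Y/k})$, of dimension $g(Y)$), not to make the sequence of global sections exact at the last stage.
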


We now make the following assumption.

\begin{assume}
\label{ass:totallyramified}
The cover $\pi:X\to Y$ in Assumption \ref{ass:general} is totally ramified, in the sense that $\pi$ is ramified and every branch point $y\in Y$ of $\pi$ is totally ramified. In other words, if $y\in Y$ is a branch point of the cover $\pi$ then there exist $s_{p,y},s_{q,y},s_{p+q,y}\in K=k(Y)$ such that each of 
$$\mathrm{ord}_y(p-(s_{p,y}^2-s_{p,y})), \quad \mathrm{ord}_y(q-(s_{q,y}^2-s_{q,y})) , \quad \mathrm{ord}_y(p+q-(s_{p+q,y}^2-s_{p+q,y}))$$ 
is negative and odd, where $p$ and $q$ are as in (\ref{eq:field}) (see, for example, \cite[Proposition 3.7.8]{Stichtenoth2009}).
\end{assume}

This assumption will enable us to find for each branch point $y\in Y$, elements $h_1(y),\ldots,h_4(y)$ of $k(X)$ satisfying $(\ref{eq:basiscondition})$ from Remark \ref{rem:important} such that $\mathrm{ord}_x(h_j(y))$ are pairwise distinct modulo $4$ for $j\in\{1,2,3,4\}$ (see Proposition \ref{prop:totallyramified} below). Moreover, the formula in (\ref{eq:divisor}) shows then that $D_3=0$ and $\mathrm{deg}(D_i)>0$ for $0\le i \le 2$. In other words, Proposition \ref{prop:dimension} applies. 

The following remark and lemma describe the basic cases we need to consider for each branch point $y \in Y$.

\begin{remark}
\label{rem:twocases}
Under Assumptions $\ref{ass:general}$ and $\ref{ass:totallyramified}$, let $y\in Y$ be a branch point, and let $x\in X$ be the unique point above it. Following the discussion in \cite[Section 3]{WuScheidler2010}, and in particular using \cite[Equations (3.6) and (3.7) and Theorem 3.11]{WuScheidler2010}, we see the following. There exist positive odd integers $m_y\le M_y$ such that the lower ramification groups at $x$ are given as 
$$\qquad \qquad G=G_{x,0}=G_{x,1}=\cdots=G_{x,m_y} > G_{x,m_y+1}=\cdots = G_{x,m_y+2(M_y-m_y)}>G_{x,m_y+2(M_y-m_y)+1}=1.$$
Moreover (using, for example, \cite[Proposition 3.7.8]{Stichtenoth2009}) there exist $s_{p,y},s_{q,y},s_{p+q,y}\in K=k(Y)$ such that either 
\begin{itemize}
\item[(i)] $m_y=M_y$ and $\mathrm{ord}_y(p-(s_{p,y}^2-s_{p,y}))=-m_y=\mathrm{ord}_y(q-(s_{q,y}^2-s_{q,y})) =\mathrm{ord}_y(p+q-(s_{p+q,y}^2-s_{p+q,y}))$, or 
\item[(ii)] $m_y< M_y$ and either
	\begin{itemize}
	\item[(a)] $\mathrm{ord}_y(p-(s_{p,y}^2-s_{p,y}))=-m_y$, $\mathrm{ord}_y(q-(s_{q,y}^2-s_{q,y}))=-M_y=\mathrm{ord}_y(p+q-(s_{p+q,y}^2-s_{p+q,y}))$, or 
	\item[(b)] $\mathrm{ord}_y(q-(s_{q,y}^2-s_{q,y}))=-m_y$, $\mathrm{ord}_y(p-(s_{p,y}^2-s_{p,y}))=-M_y=\mathrm{ord}_y(p+q-(s_{p+q,y}^2-s_{p+q,y}))$, or 
	\item[(c)] $\mathrm{ord}_y(p+q-(s_{p+q,y}^2-s_{p+q,y}))=-m_y$, $\mathrm{ord}_y(p-(s_{p,y}^2-s_{p,y}))=-M_y=\mathrm{ord}_y(q-(s_{q,y}^2-s_{q,y}))$.
	\end{itemize}
\end{itemize}
In the cases (i) or (ii)(a), define $u_y:=u-s_{p,y}$, $p_y:=p-(s_{p,y}^2-s_{p,y})$ and $v_y:=v-s_{q,y}$, $q_y :=q-(s_{q,y}^2-s_{q,y})$, in the case (ii)(b), define $u_y:=v-s_{q,y}$, $p_y:=q-(s_{q,y}^2-s_{q,y})$ and $v_y:=u-s_{p,y}$, $q_y :=p-(s_{p,y}^2-s_{p,y})$, and in the case (ii)(c), define $u_y:=u+v-s_{p+q,y}$, $p_y:=p+q-(s_{p+q,y}^2-s_{p+q,y})$ and $v_y:=v-s_{q,y}$, $q_y :=q-(s_{q,y}^2-s_{q,y})$. In particular, this means
$$\mathrm{ord}_x(u_y)=-2m_y,\quad \mathrm{ord}_x(v_y)=-2M_y.$$
By \cite[Corollary 3.10 and the proof of Theorem 3.11]{WuScheidler2010}, there exist $\alpha_y , \beta_y \in K$ such that $w_y = v_y + \alpha_y + \beta_y u_y$ is an Artin-Schreier generator of $L=k(X)$ over $K(u_y)=k(Y)(u_y)$ with the following properties:
$$\mathrm{ord}_x(w_y)=-m_y-2(M_y-m_y),\quad\mathrm{ord}_x(\beta_y)=-2(M_y-m_y).$$
Moreover,
$$\begin{array}{lll}
(\sigma -1)(w_y) = 1, & (\tau-1)(w_y) = \beta_y &\mbox{ in the situation of (i) or (ii)(a),}\\
(\tau-1)(w_y) = 1, & (\sigma -1)(w_y) = \beta_y &\mbox{ in the situation of (ii)(b), and}\\
(\sigma\circ\tau -1)(w_y) = 1, & (\tau -1)(w_y) = \beta_y &\mbox{ in the situation of (ii)(c).}
\end{array}$$
\end{remark}

The following equation gives a useful connection between upper bounds of running indices occurring in the next result, see in particular equations (\ref{eq:needref1}) and (\ref{eq:needref2}):
\begin{equation}
\label{eq:duh1new}
\left\lfloor \frac{2m_y+3}{4}\right\rfloor - \left\lfloor \frac{m_y+3}{4}\right\rfloor 
= \left\{\begin{array}{ll} \left\lfloor \frac{m_y}{4}\right\rfloor  & \mbox{if } m_y\equiv 1 \mod 4, \mbox{ and}\\
\left\lfloor \frac{m_y}{4}\right\rfloor +1 & \mbox{if } m_y\equiv 3 \mod 4.
\end{array}\right.
\end{equation}

\begin{lemma}
\label{lem:twocases}
Assume the notation from Remark $\ref{rem:twocases}$. Then $\mathrm{ord}_y(\alpha_y)\ge (-M_y+1)/2$. Let $\pi_y$ be a uniformizer at $y$ and write $\alpha_y,\beta_y,p_y,q_y$ as Laurent series in $k[[\pi_y,\pi_y^{-1}]]$ as follows:
$\alpha_y=\pi_y^{(-M_y+1)/2}\sum_{i\ge 0} a_{y,i}\pi_y^i$, $\beta_y=\pi_y^{-(M_y-m_y)/2}\sum_{i\ge 0} b_{y,i}\pi_y^i$,
$p_y=\pi_y^{-m_y}\sum_{i\ge 0} p_{y,i}\pi_y^i$, $q_y=\pi_y^{-M_y}\sum_{i\ge 0} q_{y,i}\pi_y^i$, where $b_{y,0},p_{y,0},q_{y,0}\in k^\times$. Then 
\begin{equation}
\label{eq:needref1}
q_{y,2j} + \sum_{i_1+i_2=j} p_{y,2i_1}b_{y,i_2}^2 = 0 \qquad\mbox{for } 0\le j \le \textstyle{\left\lfloor \frac{m_y}{4}\right\rfloor},
\end{equation} 
and $b_{y,0}\in k-\{0,1\}$ when $m_y=M_y$. Moreover, we obtain
\begin{equation}
\label{eq:needref2}
a_{y,j}^2=q_{y,2j+1} + \sum_{i_1+i_2=j} p_{y,2i_1+1}b_{y,i_2}^2\qquad\mbox{for } 0\le j \le \textstyle{\left\lfloor \frac{2m_y+3}{4}\right\rfloor} - \textstyle{\left\lfloor \frac{m_y+3}{4}\right\rfloor} - 1.
\end{equation}
Define 
\begin{equation}
\label{eq:wuj1}
\widetilde{\alpha}_y:=\pi_y^{(-M_y+1)/2} \left(\sum_{i = 0}^{\left\lfloor \frac{2m_y+3}{4}\right\rfloor - \left\lfloor \frac{m_y+3}{4}\right\rfloor - 1} a_{y,i}\pi_y^i\right)
\end{equation}
and, for $j\ge 0$, define
\begin{eqnarray}
\label{eq:wuj21}
\beta_y(j) &:=& \pi_y^{-(M_y-m_y)/2} \left(\sum_{i=0}^j b_{y,i}\pi_y^i\right) \quad \mbox{and}\\
\label{eq:wuj22}
w_y(j) &:=& v_y + \widetilde{\alpha}_y + \beta_y(j)\,u_y.
\end{eqnarray}
Then
\begin{equation}
\label{eq:useful1}
\mathrm{ord}_x(w_y(j)) \ge -2M_y+4j+4 \qquad\mbox{for $0\le j \le \textstyle{\left\lfloor \frac{m_y}{4}\right\rfloor} -1 $,}
\end{equation}
and 
\begin{equation}
\label{eq:useful2}
\mathrm{ord}_x(w_y(j)) = -m_y-2(M_y-m_y)\qquad\mbox{for $j\ge \textstyle{\left\lfloor \frac{m_y}{4}\right\rfloor}$.}
\end{equation}
\end{lemma}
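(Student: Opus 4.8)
The plan is to work with the Artin--Schreier relation satisfied by $w_y$ over $K(u_y)$ and to exploit the value of $\mathrm{ord}_x$ modulo $4$. From Remark~\ref{rem:twocases} we have $u_y^2-u_y=p_y$, $v_y^2-v_y=q_y$, together with the orders $\mathrm{ord}_x(u_y)=-2m_y$, $\mathrm{ord}_x(v_y)=-2M_y$, $\mathrm{ord}_x(\beta_y)=-2(M_y-m_y)$, and $\mathrm{ord}_x(w_y)=-(2M_y-m_y)$; set $N_y:=2M_y-m_y$. Since $\alpha_y,\beta_y,p_y,q_y\in K$ and $x$ is totally ramified over $y$, the restriction of $\mathrm{ord}_x$ to $K$ equals $4\,\mathrm{ord}_y$, which yields $\mathrm{ord}_y(p_y)=-m_y$, $\mathrm{ord}_y(q_y)=-M_y$, $\mathrm{ord}_y(\beta_y)=-(M_y-m_y)/2$; hence $p_{y,0},q_{y,0},b_{y,0}\in k^\times$ and the four Laurent expansions have the asserted shape. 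A direct computation in characteristic two shows that, with $A:=q_y+\alpha_y^2-\alpha_y+\beta_y^2p_y\in K$ and $B:=\beta_y^2-\beta_y\in K$,
\begin{equation*}
w_y^2-w_y=A+Bu_y\in K(u_y).
\end{equation*}

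The engine of the proof is the residue incompatibility $\mathrm{ord}_x(A)\equiv 0\pmod 4$ versus $\mathrm{ord}_x(Bu_y)=4\,\mathrm{ord}_y(B)-2m_y\equiv 2\pmod 4$, so these two orders can never coincide. Since $\mathrm{ord}_x(w_y)=-N_y<0$ forces $\mathrm{ord}_x(w_y^2-w_y)=-2N_y\equiv 2\pmod 4$, and no cancellation can occur at the minimum of two terms of distinct residues, the minimum $-2N_y$ must be realized by the $u_y$-part. Therefore $\mathrm{ord}_x(Bu_y)=-2N_y$ and $\mathrm{ord}_x(A)>-2N_y$, hence $\mathrm{ord}_x(A)\ge -2N_y+2$, i.e. $\mathrm{ord}_y(A)\ge -M_y+(m_y+1)/2$. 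The identity $\mathrm{ord}_x(Bu_y)=-2N_y$ gives $\mathrm{ord}_y(\beta_y^2-\beta_y)=-(M_y-m_y)$; when $m_y=M_y$ this says $\beta_y^2-\beta_y$ is a unit at $y$, i.e. $b_{y,0}^2-b_{y,0}\ne 0$, which is precisely $b_{y,0}\in k-\{0,1\}$.

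To extract \eqref{eq:needref1} and \eqref{eq:needref2}, write $A=(q_y+\beta_y^2p_y)+(\alpha_y^2-\alpha_y)$. A comparison of orders first forces $\mathrm{ord}_y(\alpha_y)\ge(-M_y+1)/2$: otherwise $\alpha_y^2$ would have order $\le -M_y-1$, strictly below that of $q_y+\beta_y^2p_y$, and $A$ could not satisfy the bound just proved. Consequently $\alpha_y^2$ contributes, through its Frobenius coefficients $a_{y,i}^2$, only to the odd offsets above $\pi_y^{-M_y}$, while $-\alpha_y$ contributes only at offsets $\ge (M_y+1)/2>(m_y-1)/2$. Expanding $q_y+\beta_y^2p_y=\pi_y^{-M_y}\sum_n c_n\pi_y^n$ with $c_n=q_{y,n}+\sum_{2i_1+i_2=n}b_{y,i_1}^2p_{y,i_2}$, the requirement that every coefficient of $A$ at offsets $0\le n\le (m_y-1)/2$ vanish becomes $c_{2j}=0$ at the even offsets, which is \eqref{eq:needref1}, and $a_{y,j}^2=c_{2j+1}$ at the odd offsets, which is \eqref{eq:needref2}. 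The one delicate point is matching ranges: the even and odd offsets $\le(m_y-1)/2$ correspond to $0\le j\le\lfloor m_y/4\rfloor$ and $0\le j\le\lfloor(2m_y+3)/4\rfloor-\lfloor(m_y+3)/4\rfloor-1$ respectively, which is exactly the content of \eqref{eq:duh1new}.

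Finally, for \eqref{eq:useful1} and \eqref{eq:useful2} I would use the telescoping identity
\begin{equation*}
w_y(j)=w_y-(\alpha_y-\widetilde{\alpha}_y)-\bigl(\beta_y-\beta_y(j)\bigr)u_y,
\end{equation*}
which follows from \eqref{eq:wuj22} and $w_y=v_y+\alpha_y+\beta_yu_y$. Writing $R:=\lfloor(2m_y+3)/4\rfloor-\lfloor(m_y+3)/4\rfloor$ for the truncation length in \eqref{eq:wuj1}, the error term $\alpha_y-\widetilde{\alpha}_y$ begins in $\pi_y$-degree $\ge(-M_y+1)/2+R$ and $\beta_y-\beta_y(j)$ in degree $\ge-(M_y-m_y)/2+(j+1)$, so $\mathrm{ord}_x(\alpha_y-\widetilde{\alpha}_y)\ge -2M_y+2+4R$ and $\mathrm{ord}_x\bigl((\beta_y-\beta_y(j))u_y\bigr)\ge -2M_y+4j+4$. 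For $0\le j\le\lfloor m_y/4\rfloor-1$ one has $R\ge j+1$ and $4(j+1)\le m_y$, so each summand of $w_y(j)$ has order $\ge -2M_y+4j+4$, giving \eqref{eq:useful1}; for $j\ge\lfloor m_y/4\rfloor$ both error terms have order strictly above $\mathrm{ord}_x(w_y)=-N_y$ (again by \eqref{eq:duh1new}), so the $w_y$ term dominates and \eqref{eq:useful2} follows. I expect the main obstacle to be combinatorial rather than conceptual: keeping the four Laurent expansions aligned and matching the summation ranges in \eqref{eq:needref1} and \eqref{eq:needref2}, as well as the truncation length $R$, to the cutoff $(m_y-1)/2$, all of which depend on $m_y\bmod 4$ and are precisely what \eqref{eq:duh1new} records.
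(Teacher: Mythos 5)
Your proposal is correct, and its first half is essentially the paper's own argument: both start from $w_y^2-w_y=q_y+\beta_y^2p_y+\alpha_y^2-\alpha_y+(\beta_y^2-\beta_y)u_y$, use a parity-of-valuation argument to force the two conditions $\mathrm{ord}_y(\beta_y^2-\beta_y)=-(M_y-m_y)$ (whence $b_{y,0}\in k-\{0,1\}$ when $m_y=M_y$) and the lower bound on the order of $q_y+\beta_y^2p_y+\alpha_y^2-\alpha_y$, rule out $\mathrm{ord}_y(\alpha_y)\le(-M_y-1)/2$ by the same contradiction, and then match Laurent coefficients to get \eqref{eq:needref1} and \eqref{eq:needref2}; the only cosmetic difference is that you run the parity bookkeeping on $X$ (where $K$-elements have $\mathrm{ord}_x$ divisible by $4$ and the $u_y$-part sits in residue $2$ mod $4$), while the paper runs it on the intermediate curve $Z$ (comparing orders $\equiv 2$ versus $\equiv 0$ mod $4$ there); your range-matching for the two families of coefficient equations agrees with the paper's use of \eqref{eq:duh1new}. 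Where you genuinely diverge is in the proof of \eqref{eq:useful1} and \eqref{eq:useful2}: the paper recomputes the Artin--Schreier relation $w_y(j)^2-w_y(j)=q_y+\beta_y(j)^2p_y+\widetilde{\alpha}_y^2-\widetilde{\alpha}_y+(\beta_y(j)^2-\beta_y(j))u_y$ and then invokes \eqref{eq:needref1} and \eqref{eq:needref2} to bound the order of the $K$-part, whereas you use the telescoping identity $w_y(j)=w_y+(\alpha_y-\widetilde{\alpha}_y)+(\beta_y-\beta_y(j))u_y$ together with ultrametric estimates on the truncation tails, which requires only the truncation definitions \eqref{eq:wuj1}--\eqref{eq:wuj22} and the known value $\mathrm{ord}_x(w_y)=-m_y-2(M_y-m_y)$, not the coefficient equations at all. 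Your route for this last step is shorter and isolates exactly what the truncation estimates depend on; the paper's route has the mild advantage of staying entirely inside the Artin--Schreier framework, so that the same computation simultaneously certifies that each $w_y(j)$ (in particular $\widetilde w_y$) is again an Artin--Schreier generator, which is what Remark \ref{rem:betterwu} exploits afterwards.
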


\begin{proof}
We have
\begin{eqnarray*}
w_y^2-w_y &=& v_y^2-v_y + \beta_y^2 (u_y^2-u_y) + \alpha_y^2-\alpha_y + (\beta_y^2-\beta_y)u_y\\
&=& q_y +\beta_y^2 p_y + \alpha_y^2-\alpha_y + (\beta_y^2-\beta_y)u_y.
\end{eqnarray*}
Let $Z \in\{Z_0,Z_1,Z_2\}$ be the degree two cover of $Y$ such that $u_y\in k(Z)$, and let $z\in Z$ be below $x$ and above $y$. Note that
$$\mathrm{ord}_x(w_y) = \mathrm{ord}_z (w_y^2-w_y),$$
$\mathrm{ord}_z(q_y +\beta_y^2 p_y)\ge -2M_y$ is even, $\mathrm{ord}_z(\alpha_y^2-\alpha_y)$ is either negative and divisible by 4 or non-negative and even, and $\mathrm{ord}_z(u_y) = -m_y$. Hence, to obtain $\mathrm{ord}_z (w_y^2-w_y)=-m_y - 2(M_y-m_y)$, we must satisfy the following two conditions:
\begin{equation}
\label{eq:cond1}
\mathrm{ord}_z(\beta_y^2-\beta_y)=-2(M_y-m_y)\quad\mbox{and}
\end{equation} 
\begin{equation}
\label{eq:cond2}
\mathrm{ord}_z(q_y +\beta_y^2 p_y + (\alpha_y^2-\alpha_y))\ge -m_y - 2(M_y-m_y)+1.
\end{equation}
Condition (\ref{eq:cond1}) implies that $b_{y,0}\ne 1$ in the case when $m_y=M_y$. Considering condition (\ref{eq:cond2}), suppose $\mathrm{ord}_y(\alpha_y)\le (-M_y-1)/2$. Then $\mathrm{ord}_z(\alpha_y^2-\alpha_y))\le -2M_y-2 < \mathrm{ord}_z(q_y +\beta_y^2 p_y)$, which results in a contradiction of condition (\ref{eq:cond2}). Therefore, $\mathrm{ord}_y(\alpha_y)\ge (-M_y+1)/2$. To analyze condition (\ref{eq:cond2}) further, we use the Laurent series expansions of $\alpha_y,\beta_y,p_y,q_y$ in $k[[\pi_y,\pi_y^{-1}]]$. We obtain
\begin{eqnarray*}
q_y +\beta_y^2 p_y  &=& \sum_{i\ge 0} \left( q_{y,i} + \sum_{i_1+2i_2=i} p_{y,i_1}b_{y,i_2}^2 \right) \pi_y^{-M_y+i} \\
&=& \sum_{j=0}^{\lfloor \frac{m_y}{4}\rfloor }  \left( q_{y,2j} + \sum_{i_1+i_2=j} p_{y,2i_1}b_{y,i_2}^2 \right) \pi_y^{-M_y+2j}\\
&&+ \sum_{j=0}^{\lfloor \frac{m_y}{4}\rfloor}  \left( q_{y,2j+1} + \sum_{i_1+i_2=j} p_{y,2i_1+1}b_{y,i_2}^2 \right) \pi_y^{-M_y+2j+1}\\
&&+ \sum_{i\ge 2 \lfloor \frac{m_y}{4}\rfloor + 2} \left( q_{y,i} + \sum_{i_1+2i_2=i} p_{y,i_1}b_{y,i_2}^2 \right) \pi_y^{-M_y+i} 
\end{eqnarray*}
where 
$$\mathrm{ord}_z(\pi_y^{-M_y+2 \lfloor \frac{m_y}{4}\rfloor + 2}) = -2M_y+4 \textstyle{\left\lfloor \frac{m_y}{4}\right\rfloor} + 4
\ge -2M_y+(m_y-3)+4 =-m_y -2(M_y-m_y)+1.$$
For $0\le j \le \lfloor\frac{m_y}{4}\rfloor$, we have that $\mathrm{ord}_z(\pi_y^{-M_y+2j}) = -2M_y+4j$ is even but not divisible by 4,
whereas $\mathrm{ord}_z(\pi_y^{-M_y+2j+1}) = -2M_y+4j+2$ is divisible by 4. Since $\mathrm{ord}_z(\alpha_y)$ and $\mathrm{ord}_z(\alpha_y^2)$ are divisible by 2 and 4, respectively, and since $\mathrm{ord}_z(\pi_y^{(-M_y+1)/2}) = -M_y+1 \ge -m_y -2(M_y-m_y)+1$, we moreover obtain equations (\ref{eq:needref1}) and (\ref{eq:needref2}).
Note that if $m_y\equiv 3\mod 4$ then $\lfloor\frac{2m_y+3}{4}\rfloor - \lfloor\frac{m_y+3}{4}\rfloor - 1 =\lfloor\frac{m_y}{4}\rfloor $. On the other hand, if $m_y\equiv 1\mod 4$ then 
$$\mathrm{ord}_z(\pi_y^{-M_y+2\lfloor\frac{m_y}{4}\rfloor +1}) = -2M_y+4 \textstyle{\left\lfloor\frac{m_y}{4}\right\rfloor}+ 2
=-2M_y + (m_y-1)+2 = -m_y -2(M_y-m_y)+1$$
which means that equation (\ref{eq:needref2}) only follows for $0\le j \le \lfloor\frac{m_y}{4}\rfloor - 1 = \lfloor\frac{2m_y+3}{4}\rfloor - \lfloor\frac{m_y+3}{4}\rfloor - 1$.

Fix now $j\ge 0$ and consider $\widetilde{\alpha}_y$, $\beta_y(j)$ and $w_y(j)$ as in (\ref{eq:wuj1}), (\ref{eq:wuj21}) and (\ref{eq:wuj22}). We have
\begin{eqnarray*}
w_y(j)^2-w_y(j) &=& q_y +\beta_y(j)^2 p_y + \widetilde{\alpha}_y^2-\widetilde{\alpha}_y + (\beta_y(j)^2-\beta_y(j))u_y.
\end{eqnarray*}
Since $b_{y,0}\not\in\{0,1\}$ when $m_y=M_y$, we have $\mathrm{ord}_z(\beta_y(j)^2-\beta_y(j))=-2(M_y-m_y)$. Moreover, equations (\ref{eq:needref1}) and (\ref{eq:needref2}) show that for $j\le \lfloor \frac{m_y}{4}\rfloor -1$,
$$\mathrm{ord}_z (q_y +\beta_y(j)^2 p_y + \widetilde{\alpha}_y^2-\widetilde{\alpha}_y)\ge \mathrm{ord}_z(\pi_y^{-M_y+2j+2})=-2M_y+4j+4.$$
Since
$$-2M_y+4\left(\textstyle{\left\lfloor \frac{m_y}{4}\right\rfloor}-1\right) +4
=\left\{\begin{array}{ll}
-m_y -2(M_y-m_y)-1 & \mbox{ if } m_y\equiv 1\mod 4,\\
-m_y -2(M_y-m_y)-3 & \mbox{ if } m_y\equiv 3\mod 4,
\end{array}\right.$$
we obtain equations (\ref{eq:useful1}) and (\ref{eq:useful2}).
\end{proof}

\begin{remark}
\label{rem:betterwu}
Assume the notation from Remark \ref{rem:twocases} and Lemma \ref{lem:twocases}. Using (\ref{eq:wuj21}) and (\ref{eq:wuj22}), define
\begin{equation}
\label{eq:betterwu}
\widetilde{\beta}_y := \beta_y\left( \textstyle{\left\lfloor\frac{m_y}{4}\right\rfloor} \right) \quad\mbox{and}\quad
\widetilde{w}_y:=w_y\left(\textstyle{\left\lfloor\frac{m_y}{4}\right\rfloor} \right).
\end{equation}
We have
$$\mathrm{ord}_x(\widetilde{w}_y)=-m_y-2(M_y-m_y)\quad\mbox{and}\quad \mathrm{ord}_x(\widetilde{\beta}_y)=-2(M_y-m_y).$$
Moreover,
$$\begin{array}{lll}
(\sigma -1)(\widetilde{w}_y) = 1, & (\tau-1)(\widetilde{w}_y) = \widetilde{\beta}_y &\mbox{ in the situation of Remark \ref{rem:twocases}(i) or (ii)(a),}\\
(\tau-1)(\widetilde{w}_y) = 1, & (\sigma -1)(\widetilde{w}_y) = \widetilde{\beta}_y &\mbox{ in the situation of Remark \ref{rem:twocases}(ii)(b), and}\\
(\sigma\circ\tau -1)(\widetilde{w}_y) = 1, & (\tau -1)(\widetilde{w}_y) = \widetilde{\beta}_y &\mbox{ in the situation of Remark \ref{rem:twocases}(ii)(c).}
\end{array}$$
In other words, if we replace $\alpha_y$ by $\widetilde{\alpha}_y$, $\beta_y$ by $\widetilde{\beta}_y$ and $w_y$ by $\widetilde{w}_y$, then $\widetilde{w}_y=v_y+\widetilde{\alpha}_y+\widetilde{\beta}_y\,u_y$ is an Artin-Schreier generator of $L=k(X)$ over $K(u_y)=k(Y)(u_y)$ satisfying the analogous properties that $w_y$ satsifies in Remark \ref{rem:twocases}. The advantage of using $\widetilde{\alpha}_y$ and $\widetilde{\beta}_y$ instead of $\alpha_y$ and $\beta_y$ is that they are Laurent polynomials rather than Laurent series and that they are fully determined by equations (\ref{eq:needref1}) and (\ref{eq:needref2}).
\end{remark}

We use the following additional notation, based on Remark \ref{rem:twocases} and Lemma \ref{lem:twocases}.

\begin{nota}
\label{not:twocases}
Assume the notation from Remark \ref{rem:twocases} and Lemma \ref{lem:twocases}. 
\begin{itemize}
\item[(i)] If we are in the situation of Remark \ref{rem:twocases}(i), we define $\lambda_y:=b_{y,0}$.  By Lemma \ref{lem:twocases}, $\lambda_y\in k-\{0,1\}$. Additionally, if $1 \le \mathrm{ord}_y(\beta_y-\lambda_y)\le \left\lfloor \frac{m_y}{4}\right\rfloor$, we define $\delta_y:=\mathrm{ord}_y(\beta_y-\lambda_y)$, and otherwise we define $\delta_y:=0$. In particular,  $\delta_y\in\{0,1,\ldots,\left\lfloor \frac{m_y}{4}\right\rfloor\}$. Moreover, if $\delta_y=0$ then $\widetilde{\beta}_y=\lambda_y$ and $\widetilde{w}_y=v_y+\widetilde{\alpha}_y+\lambda_y\, u_y$.
\item[(ii)] If we are in the situation of Remark \ref{rem:twocases}(ii), we define $\delta_y:=-1$. Moreover, we define 
$$\lambda_y:=\left\{\begin{array}{cl} \infty & \mbox{if we are in the situation of Remark \ref{rem:twocases}(ii)(a)},\\
0 & \mbox{if we are in the situation of Remark \ref{rem:twocases}(ii)(b)},\\
1 & \mbox{if we are in the situation of Remark \ref{rem:twocases}(ii)(c)}.
\end{array}\right.$$
\end{itemize}
\end{nota}

The next remark gives a different way of classifying the branch points $y\in Y$ for which $\delta_y=0$ or $\delta_y=-1$.

\begin{remark}
\label{rem:awkward1follows}
Under Assumptions $\ref{ass:general}$ and $\ref{ass:totallyramified}$, let $y\in Y$ be a branch point. 
Suppose $k(Y)\subset k(Z_y) \subset k(X)$ is a tower of function fields such that $k(Z_y) = k(Y)(u_y)$ and $k(X)=k(Z_y)(w_y)$ are degree $2$ Artin-Schreier extensions of $k(Y)$ and $k(Z_y)$, respectively, satisfying the following condition:

If $z_y\in Z_y$ lies above $y$ and $x_y\in X$ lies above $z_y$, then $j_y:=-\mathrm{ord}_{z_y}(u_y)$ and $J_y:=-\mathrm{ord}_{x_y}(w_y)$ are positive odd integers such that $j_y\le J_y$ and the lower ramification groups at $x_y$ inside $G$ have jumps at $j_y$ and $J_y$.

By Remark \ref{rem:twocases}, $u_y$ and $w_y$ always exist, and $j_y=m_y$ and $J_y=m_y+2(M_y-m_y)$. Moreover, we can always arrange that there exist non-identity elements $\sigma_y,\tau_y\in G$ such that $(\sigma_y-1)(u_y)=0$ and $(\tau_y-1)(u_y)=1$. Additionally, we can find $v_y\in k(X)$ such that $k(Y)(v_y)$ is a degree $2$ Artin-Schreier extension of $k(Y)$ and $(\sigma_y-1)(v_y)=1$ and $(\tau_y-1)(v_y)=0$. This implies that $k(X)=k(Y)(u_y,v_y)$ and that there exist $\alpha_y,\beta_y,\gamma_y \in k(Y)$, $\gamma_y\ne 0$, such that $w_y=\alpha_y + \beta_y u_y + \gamma_y v_y$. 

Suppose now additionally that $j_y=J_y$ and that $(g-1)(w_y)\in k^\times$ for all non-identity elements $g \in G$. This implies that $\beta_y=(\sigma_y-1)(w_y)$, $\gamma_y=(\tau_y-1)(w_y)$ and $\beta_y+\gamma_y=(\sigma_y\tau_y-1)(w_y)$ all lie in $k^\times$. In other words, $\beta_y,\gamma_y\in k^\times$ and $\beta_y\ne \gamma_y$. Replacing $w_y$ by $\gamma_y^{-1}w_y$, $\alpha_y$ by $\gamma_y^{-1}\alpha_y$ and $\beta_y$ by $\gamma_y^{-1}\beta_y$, we can assume that $w_y=v_y+\alpha_y + \beta_y u_y$ where $\alpha_y\in k(Y)$ and $\beta_y\in k-\{0,1\}$. In particular, $y$ is as in the situation of Remark \ref{rem:twocases}(i) and $\delta_y=0$. Conversely, if $y$ is as in the situation of Remark \ref{rem:twocases}(i) and $\delta_y=0$, then we can replace $w_y$ from Remark \ref{rem:twocases} by $\widetilde{w}_y$ from Remark \ref{rem:betterwu}, if necessary, to obtain that $(g-1)(w_y)\in k^\times$ for all non-identity elements $g \in G$. 

On the other hand, if $j_y<J_y$, then $y$ is as in the situation of Remark \ref{rem:twocases}(ii) and hence  $\delta_y=-1$.
\end{remark}

For a branch point $y\in Y$ and a point $(a:b)\in \mathbb{P}^1_k$, we define
\begin{equation}
\label{eq:pointswitch}
(c:d):=\left\{\begin{array}{cl} (a:b)&\mbox{if $y$ is as in Remark \ref{rem:twocases}(i) or (ii)(a)},\\
(b:a)&\mbox{if $y$ is as in Remark \ref{rem:twocases}(ii)(b)},\\
(a:a+b)&\mbox{if $y$ is as in Remark \ref{rem:twocases}(ii)(c)}.
\end{array}\right.
\end{equation}

\begin{proposition}
\label{prop:totallyramified}
Under Assumptions $\ref{ass:general}$ and $\ref{ass:totallyramified}$, let $y\in Y$ be a branch point, and let $x\in X$ be a point above $y$. Fix a point $(a:b)\in \mathbb{P}^1_k$ and let $(c:d)$ be as in $(\ref{eq:pointswitch})$.
Let $\widetilde{\beta}_y$ and $\widetilde{w}_y$ be as in $(\ref{eq:betterwu})$, and define
\begin{eqnarray*}
h_1(y)&:=&1,\\
h_2(y)&:=&\left\{\begin{array}{ll}\frac{1}{b}((c+d\widetilde{\beta}_y)u_y+d\widetilde{w}_y)&\mbox{if } b\ne 0 \\ \frac{1}{a}((c+d\widetilde{\beta}_y)u_y+d\widetilde{w}_y)&\mbox{if } b=0\end{array}\right\},\\
h_3(y)&:=&\left\{\begin{array}{ll}u_y&\mbox{if } \delta_y=0 \mbox{ and } c=d\lambda_y\\ \frac{b}{c+d\widetilde{\beta}_y}\widetilde{w}_y &\mbox{if }(\delta_y\ne 0 \mbox{ or } c\ne d\lambda_y)\mbox{ and }b\ne 0\\  \frac{a}{c+d\widetilde{\beta}_y}\widetilde{w}_y &\mbox{if }(\delta_y\ne 0 \mbox{ or } c\ne d\lambda_y)\mbox{ and }b= 0\end{array}\right\},\\
h_4(y)&:=&\widetilde{w}_yu_y.
\end{eqnarray*}
Then $h_1=h_1(y)$, $h_2=h_2(y)$, $h_3=h_3(y)$, and $h_4=h_4(y)$ are elements of $k(X)$ satisfying $(\ref{eq:basiscondition})$ from Remark $\ref{rem:important}$, and $\mathrm{ord}_x(h_j(y))$ are pairwise distinct modulo $4$ for $j\in\{1,2,3,4\}$. Moreover, the divisors $D_0,\ldots,D_3$ from Proposition $\ref{prop:crucial}$ satisfy $D_3=0$ and $\mathrm{deg}(D_i)>0$ for $0\le i \le 2$. More precisely, for $0\le i \le 2$, the coefficient $d_{y,i}$ of $y$ in $D_i$ is given as follows:
\begin{eqnarray*}
d_{y,0}&=&\left\lfloor \frac{3m_y+3}{4}\right\rfloor + \frac{M_y-m_y}{2},\\[2ex]
d_{y,1}&=&\left\{ 
\begin{array}{ll} 
\left\lfloor \frac{2m_y+3}{4}\right\rfloor &\mbox{if } \delta_y=0 \mbox{ and } c=d\lambda_y,\\
\left\lfloor \frac{m_y+3}{4}\right\rfloor + \delta_y &\mbox{if } \delta_y>0 \mbox{ and } c=d\lambda_y,\\
\left\lfloor \frac{m_y+3}{4}\right\rfloor + \frac{M_y-m_y}{2}&\mbox{if } \delta_y=-1 \mbox{ and }d=0,\\
\left\lfloor \frac{m_y+3}{4}\right\rfloor &\mbox{otherwise},
\end{array}\right.\\[2ex]
d_{y,2}&=&\left\{ 
\begin{array}{ll} 
\left\lfloor \frac{m_y+3}{4}\right\rfloor &\mbox{if } \delta_y=0 \mbox{ and } c=d\lambda_y,\\
\left\lfloor \frac{2m_y+3}{4}\right\rfloor - \delta_y &\mbox{if } \delta_y>0 \mbox{ and } c=d\lambda_y,\\
\left\lfloor \frac{2m_y+3}{4}\right\rfloor &\mbox{if } \delta_y=-1 \mbox{ and }d=0,\\
\left\lfloor \frac{2m_y+3}{4}\right\rfloor + \frac{M_y-m_y}{2} & \mbox{otherwise}.
\end{array}\right.
\end{eqnarray*}
\end{proposition}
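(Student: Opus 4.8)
The overall plan is to first pass to the Artin--Schreier generator $\widetilde{w}_y = v_y + \widetilde{\alpha}_y + \widetilde{\beta}_y u_y$ of Remark~\ref{rem:betterwu}, so that $\widetilde{\beta}_y\in k(Y)$ is $G$-fixed, the group-action relations of Remark~\ref{rem:betterwu} hold exactly, and we may use the orders $\mathrm{ord}_x(u_y)=-2m_y$, $\mathrm{ord}_x(\widetilde{w}_y)=-m_y-2(M_y-m_y)$, $\mathrm{ord}_x(\widetilde{\beta}_y)=-2(M_y-m_y)$, together with $\mathrm{ord}_x(f)=4\,\mathrm{ord}_y(f)$ for $f\in k(Y)$. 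The argument then splits into three tasks: (a) verifying that $h_1(y),\ldots,h_4(y)$ satisfy $(\ref{eq:basiscondition})$; (b) computing $\mathrm{ord}_x(h_j(y))$ and checking they are pairwise distinct modulo $4$; and (c) computing the different exponent $d_{x/y}$ and substituting into the formula $(\ref{eq:divisor})$ of Proposition~\ref{prop:crucial}(ii).

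For task (a) the role of the point-switch $(\ref{eq:pointswitch})$ is to reduce the four situations of Remark~\ref{rem:twocases} to one computation. I would show that in all cases simultaneously the operator $f_2^{(a:b)}$ satisfies the uniform identities $f_2^{(a:b)}(u_y)=d/b$ and $f_2^{(a:b)}(\widetilde{w}_y)=(c+d\widetilde{\beta}_y)/b$ when $b\ne 0$ (and the analogous identities with $f_2=\sigma-1$ and $b$ replaced by $a$ when $b=0$). These follow by a direct computation from the relations of Remark~\ref{rem:betterwu} in each of the cases (i)/(ii)(a), (ii)(b), (ii)(c), using $\sigma=(\sigma\tau)\tau$ to treat case (ii)(c); the combinations $c+d=b$ in case (ii)(c) and $c=b$ in case (ii)(b) are exactly what the point-switch provides. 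Granting the identities, $(\ref{eq:basiscondition})$ follows quickly: $h_1=1\in k(Y)=k(X)^{(1)}$ with $\widetilde{f}_0h_1=1$; for $h_4=\widetilde{w}_yu_y$ the product rule $g(ab)=g(a)g(b)$ gives $(\sigma-1)(\tau-1)(\widetilde{w}_yu_y)=1$; for $h_2$ one substitutes the identities to get $\widetilde{f}_1h_2=1$ and $f_2^{(a:b)}h_2=0$; and for $h_3=\tfrac{b}{c+d\widetilde{\beta}_y}\widetilde{w}_y$ (respectively $h_3=u_y$ in the sub-case $\delta_y=0$, $c=d\lambda_y$) one gets $f_2^{(a:b)}h_3=1$ (respectively $d/b$) and $(\sigma-1)(\tau-1)h_3=0$. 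The only delicate point is that $c+d\widetilde{\beta}_y\ne 0$, so that $h_3$ is defined: this holds because outside the sub-case $\delta_y=0$, $c=d\lambda_y$, the element $\widetilde{\beta}_y$ is either a non-constant Laurent polynomial (when $\delta_y\ne 0$) or the constant $\lambda_y\ne c/d$.

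For task (b) the key quantity is $\mathrm{ord}_x(c+d\widetilde{\beta}_y)$, which takes one of three values: $0$ when the constant term survives; $4\delta_y$ in the sub-case $\delta_y>0$, $c=d\lambda_y$ (using $\mathrm{ord}_y(\widetilde{\beta}_y-\lambda_y)=\delta_y$ from Notation~\ref{not:twocases}); and $-2(M_y-m_y)$ when $\delta_y=-1$ and $d\ne 0$, where $\widetilde{\beta}_y$ contributes its pole. From these and the orders of $u_y,\widetilde{w}_y$ one reads off $\mathrm{ord}_x(h_2)$ and $\mathrm{ord}_x(h_3)$ in each sub-case, tracking which of the two summands of $h_2$ dominates; together with $\mathrm{ord}_x(h_1)=0$ and $\mathrm{ord}_x(h_4)=-3m_y-2(M_y-m_y)$, reduction modulo $4$ (using that $m_y,M_y$ are odd) shows the four orders realize $0,1,2,3$ in some order. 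For task (c), the filtration in Remark~\ref{rem:twocases} gives $d_{x/y}=\sum_{i\ge 0}(|G_{x,i}|-1)=3(m_y+1)+2(M_y-m_y)=3m_y+3+2(M_y-m_y)$; substituting the orders from (b) into $(\ref{eq:divisor})$ yields the stated $d_{y,0},d_{y,1},d_{y,2}$ and $d_{y,3}=\lfloor 3/4\rfloor=0$. Since $d_{y,i}\ge 1$ for every branch point $y$ and $0\le i\le 2$, and $\pi$ has a branch point by Assumption~\ref{ass:totallyramified}, we conclude $D_3=0$ and $\mathrm{deg}(D_i)>0$.

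The main obstacle is the bookkeeping shared by tasks (a) and (b): one must confirm that the single point-switch $(\ref{eq:pointswitch})$ genuinely unifies all four cases of Remark~\ref{rem:twocases}, including the boundary values $b=0$, $d=0$, and $a=0$, where the formula for $f_2^{(a:b)}$ degenerates and the defining formulas for $h_2,h_3$ change, and that in each resulting sub-case $c+d\widetilde{\beta}_y$ has exactly the claimed valuation so that the correct summand of $h_2$ dominates. Once the value of $\mathrm{ord}_x(c+d\widetilde{\beta}_y)$ is pinned down in every sub-case, the passage to the $d_{y,i}$ via $(\ref{eq:divisor})$ is routine.
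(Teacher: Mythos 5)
Your proposal is correct and follows essentially the same route as the paper's proof: verify the basis condition $(\ref{eq:basiscondition})$, compute $\mathrm{ord}_x(h_j(y))$ case by case, observe they are distinct modulo $4$ since $m_y$ is odd and $M_y-m_y$ is even, compute $d_{x/y}=3(m_y+1)+2(M_y-m_y)$ from the ramification filtration, and substitute into $(\ref{eq:divisor})$. The paper compresses all of this into ``a straightforward computation,'' whereas you make it explicit — in particular your uniform identities $f_2^{(a:b)}(u_y)=d/b$ and $f_2^{(a:b)}(\widetilde{w}_y)=(c+d\widetilde{\beta}_y)/b$, and the three-valued analysis of $\mathrm{ord}_x(c+d\widetilde{\beta}_y)$, are exactly the bookkeeping the point-switch $(\ref{eq:pointswitch})$ was designed to enable, and they check out in all cases including $b=0$ and $d=0$.
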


\begin{proof}
A straightforward computation shows that $\widetilde{f}_j h_j=0$ for $1\le j\le 4$, and that
$\widetilde{f}_i h_{i+1}=1\in k^\times$ for $0\le i\le 3$. Moreover, we have
\begin{eqnarray*}
\mathrm{ord}_x(h_1) &=& 0,\\
\mathrm{ord}_x(h_2) &=&\left\{ 
\begin{array}{ll} 
-m_y &\mbox{if } \delta_y=0 \mbox{ and } c=d\lambda_y,\\
- 2m_y + 4\delta_y  &\mbox{if } \delta_y>0 \mbox{ and } c=d\lambda_y,\\
-2m_y&\mbox{if } \delta_y=-1 \mbox{ and }d=0,\\
-2m_y - 2 (M_y-m_y)&\mbox{otherwise},
\end{array}\right.\\[2ex]
\mathrm{ord}_x(h_3)&=&\left\{ 
\begin{array}{ll} 
-2m_y  &\mbox{if } \delta_y=0 \mbox{ and } c=d\lambda_y,\\
- m_y - 4\delta_y  &\mbox{if } \delta_y>0 \mbox{ and } c=d\lambda_y,\\
- m_y - 2 (M_y-m_y) &\mbox{if } \delta_y=-1 \mbox{ and }d=0,\\
- m_y& \mbox{otherwise},
\end{array}\right.\\
\mathrm{ord}_x(h_4) &=& -3m_y-2(M_y-m_y).
\end{eqnarray*}
Since $M_y-m_y$ is even, this shows that $\mathrm{ord}_x(h_j)$ are pairwise distinct modulo $4$ for $j\in\{1,2,3,4\}$. By Remark \ref{rem:twocases}, the different exponent $d_{x/y} = \sum_{i\ge 0} \#(G_{x,i}-1)$ satisfies
$$d_{x/y} = 3(m_y+1) + 2 (M_y-m_y).$$
Therefore, the formulas for $d_{y,i}$, $0\le i\le 3$, given in Proposition \ref{prop:totallyramified} now follow from Proposition \ref{prop:crucial}(ii). In particular, we obtain $D_3=0$ and $\mathrm{deg}(D_i)>0$ for $0\le i\le 2.$
\end{proof}

\begin{nota}
\label{not:branchpoints}
Define $Y_{\mathrm{br}}\subset Y$ to be the set of branch points of the cover $\pi:X\to Y$, and define $\Lambda_{\mathrm{br}}:=\{\lambda_y\;:\;y\in Y_{\mathrm{br}}\}$. We organize $Y_{\mathrm{br}}$  into 3 disjoint subsets:
\begin{eqnarray*}
B_1 &:=& \{ y\in Y_{\mathrm{br}} \; : \; \delta_y=0\},\\
B_2 &:=& \{ y\in Y_{\mathrm{br}} \; : \; \delta_y \ge 1\},\\
B_3 &:=& \{ y\in Y_{\mathrm{br}} \; : \; \delta_y=-1\}.
\end{eqnarray*}
In other words, $y\in B_1\cup B_2$ is as in the situation of Remark \ref{rem:twocases}(i), and $y\in B_3$ is as in the situation of Remark \ref{rem:twocases}(ii).
Moreover, for $i\in\{1,2,3\}$ and $\lambda\in k\cup\{\infty\}$, define
$$B_{i,\lambda} := \{ y\in B_i \;:\; \lambda_y=\lambda\}.$$
\end{nota}

Using Riemann-Roch, the next result is a direct consequence of Propositions \ref{prop:dimension} and \ref{prop:totallyramified}.

\begin{proposition}
\label{prop:dims}
Under Assumptions $\ref{ass:general}$ and $\ref{ass:totallyramified}$, fix a point $(a:b)\in \mathbb{P}^1_k$. Define $\lambda:=a/b\in k\cup\{\infty\}$, and let $M=\HH^0(X,\Omega_{X/k})$.
For $0\le i\le 3$, the non-negative integer
$$r_i:=\mathrm{dim}_k (M^{(i+1)}/M^{(i)})-g(Y)+1$$
is given as follows:
\begin{eqnarray*}
r_0 &=& \sum_{y\in Y_{\mathrm{br}}} \left( \left\lfloor \frac{3m_y+3}{4}\right\rfloor + \frac{M_y-m_y}{2}\right),\\[2ex]
r_1 &=& \left\{ \begin{array}{cl}
\sum_{y\in B_{1,\lambda}}\left\lfloor \frac{2m_y+3}{4}\right\rfloor +
\sum_{y\in B_{2, \lambda}}\left(\left\lfloor \frac{m_y+3}{4}\right\rfloor+\delta_y\right) \\[.5ex]
 +\sum_{y\in Y_{\mathrm{br}}, \lambda_y\ne \lambda} \left\lfloor \frac{m_y+3}{4}\right\rfloor 
&\mbox{if $\lambda\not\in\{0,1,\infty\}$}, 
\\[2ex]
\sum_{y\in B_{3,\lambda}} \left(\left\lfloor \frac{m_y+3}{4}\right\rfloor+\frac{M_y-m_y}{2}\right) +
\sum_{y\in Y_{\mathrm{br}}-B_{3,\lambda}} \left\lfloor \frac{m_y+3}{4}\right\rfloor & \mbox{if $\lambda\in\{0,1,\infty\}$}, 
\end{array}\right.\\[2ex]
r_2 &=& \left\{ \begin{array}{cl}
\sum_{y\in B_{1,\lambda}}\left\lfloor \frac{m_y+3}{4}\right\rfloor +
\sum_{y\in B_{2,\lambda}}\left(\left\lfloor \frac{2m_y+3}{4}\right\rfloor-\delta_y\right) \\[.5ex]
 +\sum_{y\in Y_{\mathrm{br}}, \lambda_y\ne \lambda} \left(\left\lfloor \frac{2m_y+3}{4}\right\rfloor +\frac{M_y-m_y}{2}\right) 
&\mbox{if $\lambda\not\in\{0,1,\infty\}$}, 
\\[2ex]
 \sum_{y\in B_{3,\lambda}} \left\lfloor \frac{2m_y+3}{4}\right\rfloor +
\sum_{y\in Y_{\mathrm{br}}-B_{3,\lambda}} \left(\left\lfloor \frac{2m_y+3}{4}\right\rfloor +\frac{M_y-m_y}{2}\right)
&\mbox{if $\lambda\in\{0,1,\infty\}$}, 
\end{array}\right.\\[2ex]
r_3&=&1.
\end{eqnarray*}
\end{proposition}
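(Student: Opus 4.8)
The plan is to read off the dimensions of the graded pieces $M^{(i+1)}/M^{(i)}$ from Proposition \ref{prop:dimension}, convert them into Euler characteristics via Riemann--Roch, and then translate the pointwise data of Proposition \ref{prop:totallyramified} into the global formulas stated. First I would verify that Proposition \ref{prop:dimension} is applicable: Proposition \ref{prop:totallyramified} gives $D_3 = 0$ and $\mathrm{deg}(D_i) > 0$ for $0 \le i \le 2$, which are precisely its hypotheses. Thus for $0 \le i \le 3$ we obtain
$$M^{(i+1)}/M^{(i)} \cong \HH^0(Y, \Omega_{Y/k} \otimes_{\mathcal{O}_Y} \mathcal{O}_Y(D_i)),$$
and the task becomes computing the $k$-dimension of the right-hand side.

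Next I would apply Riemann--Roch to the line bundle $\Omega_{Y/k} \otimes \mathcal{O}_Y(D_i)$, which has degree $2g(Y) - 2 + \mathrm{deg}(D_i)$. For $0 \le i \le 2$, Serre duality identifies $\HH^1(Y, \Omega_{Y/k} \otimes \mathcal{O}_Y(D_i))$ with the $k$-dual of $\HH^0(Y, \mathcal{O}_Y(-D_i))$, and the latter is zero because $\mathrm{deg}(-D_i) < 0$. Hence $\mathrm{dim}_k \HH^0(Y, \Omega_{Y/k} \otimes \mathcal{O}_Y(D_i)) = g(Y) - 1 + \mathrm{deg}(D_i)$, which gives $r_i = \mathrm{deg}(D_i)$. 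For $i = 3$, the identity $D_3 = 0$ yields $\mathrm{dim}_k \HH^0(Y, \Omega_{Y/k}) = g(Y)$ and therefore $r_3 = 1$, matching the statement. So everything reduces to computing the degrees $\mathrm{deg}(D_i) = \sum_{y \in Y_{\mathrm{br}}} d_{y,i}$, where only branch points contribute by Proposition \ref{prop:crucial}(i).

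Finally I would substitute the coefficients $d_{y,i}$ from Proposition \ref{prop:totallyramified}, whose case distinctions are expressed through $(c:d)$, $\delta_y$ and $\lambda_y$, and rewrite them in terms of $\lambda = a/b$ and the sets $B_{i,\lambda}$. The key translation is the equivalence of the branching conditions with $\lambda = \lambda_y$: for $y \in B_1 \cup B_2$ one has $(c:d) = (a:b)$ and $\lambda_y \in k - \{0,1\}$, so $c = d\lambda_y$ holds exactly when $\lambda = \lambda_y$; for $y \in B_3$ one has $\lambda_y \in \{0,1,\infty\}$, and checking the three sub-cases of Remark \ref{rem:twocases}(ii) via (\ref{eq:pointswitch}) --- using $\mathrm{char}(k) = 2$ in sub-case (c) --- shows that $d = 0$ also holds exactly when $\lambda = \lambda_y$. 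Since the possible values of $\lambda_y$ lie in $k - \{0,1\}$ for $B_1 \cup B_2$ and in $\{0,1,\infty\}$ for $B_3$, this forces the dichotomy in the statement: when $\lambda \notin \{0,1,\infty\}$ only $B_1$- and $B_2$-points can satisfy $\lambda_y = \lambda$, and when $\lambda \in \{0,1,\infty\}$ only $B_3$-points can. Collecting the corresponding values of $d_{y,i}$ over $B_{1,\lambda}$, $B_{2,\lambda}$, $B_{3,\lambda}$ and their complements then produces the displayed formulas for $r_0$, $r_1$, $r_2$.

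I expect the only genuinely substantive step to be the second one: the vanishing of $\HH^1$ forced by the positivity $\mathrm{deg}(D_i) > 0$ is exactly what collapses the Euler characteristic to $\mathrm{deg}(D_i)$, so that $r_i$ equals a degree with no correction term. The remaining work is bookkeeping, the one delicate point being the characteristic-two verification that $d = 0$ is equivalent to $\lambda = \lambda_y$ across the three sub-cases defining $B_3$.
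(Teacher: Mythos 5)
Your proposal is correct and follows exactly the route the paper intends: the paper states that Proposition \ref{prop:dims} is a direct consequence of Propositions \ref{prop:dimension} and \ref{prop:totallyramified} via Riemann--Roch, and your write-up simply fills in the details of that argument (the $\HH^1$-vanishing from $\mathrm{deg}(D_i)>0$ giving $r_i=\mathrm{deg}(D_i)$, and the translation of the conditions $c=d\lambda_y$ and $d=0$ into $\lambda=\lambda_y$, including the characteristic-two check in case (ii)(c)).
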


We now use Proposition \ref{prop:dims} 
to determine as much as possible about the indecomposable direct $kG$-module summands of $\HH^0(X,\Omega_{X/k})$. We make the following definition, using Notation \ref{not:indecomposables} for the indecomposable $kG$-modules (see also Remark \ref{rem:indecomposables}).

\begin{definition}
\label{def:decompose}
Let $M=\HH^0(X,\Omega_{X/k})$ and write
$$M=\bigoplus_{h=1}^{\varepsilon_1} N_{2\ell_h,\lambda_h} \;\oplus\; \bigoplus_{i=1}^{\varepsilon_2} M_{2n_i+1,1} \;\oplus\;  \bigoplus_{j=1}^{\varepsilon_3} M_{2n'_j+1,2} \;\oplus\; k^{\oplus \varepsilon_4} \;\oplus\; kG^{\oplus \varepsilon_5}$$
where $\varepsilon_1,\varepsilon_2,\varepsilon_3,\varepsilon_4,\varepsilon_5\in \mathbb{Z}^+\cup\{0\}$, $\ell_h,n_i,n'_j\in \mathbb{Z}^+$, $\lambda_h\in k\cup\{\infty\}$ for $1\le h\le \varepsilon_1, 1\le i\le \varepsilon_2, 1\le j \le \varepsilon_3$. Define $\Lambda_M:=\{\lambda_h\;:\; 1\le h\le \varepsilon_1\}$, and for $\lambda\in k\cup\{\infty\}$, define $\varepsilon_{1,\lambda}:=\#\{h \; :\; 1\le h \le \varepsilon_1, \lambda_h=\lambda\}$.
\end{definition}

\begin{theorem}
\label{thm:mostgeneral}
Under Assumptions $\ref{ass:general}$ and $\ref{ass:totallyramified}$,  let $M=\HH^0(X,\Omega_{X/k})$ be as in Definition $\ref{def:decompose}$. 
Then
\begin{itemize}
\item[(i)] $\Lambda_M \subseteq \Lambda_{\mathrm{br}}$ and, if $\lambda\in \Lambda_M$, then 
$$\varepsilon_{1,\lambda} = 
\left\{\begin{array}{ll}
\sum_{y\in B_{1,\lambda}}\left(\left\lfloor \frac{2m_y+3}{4}\right\rfloor - \left\lfloor \frac{m_y+3}{4}\right\rfloor \right) + \sum_{y\in B_{2,\lambda}}\delta_y & \mbox{if } \lambda\not\in\{0,1,\infty\},\\[1ex]
\sum_{y\in B_{3,\lambda}} \frac{M_y-m_y}{2} & \mbox{if } \lambda\in\{0,1,\infty\},
\end{array}\right.$$
\item[(ii)] $\varepsilon_2= \displaystyle\left(\sum_{y\in Y_{\mathrm{br}}} \left\lfloor \frac{m_y+3}{4}\right\rfloor\right)-1$,
\item[(iii)] $\varepsilon_3+\varepsilon_4 = \displaystyle \sum_{y\in Y_{\mathrm{br}}} \left(\left\lfloor \frac{3m_y+3}{4}\right\rfloor - \left\lfloor \frac{2m_y+3}{4}\right\rfloor\right)$,
\item[(iv)] $\varepsilon_5=g(Y)$.
\end{itemize}
For any $\lambda\in\Lambda_{\mathrm{br}}-\Lambda_M$, we have that
$B_{2,\lambda}\cup B_{3,\lambda}=\emptyset$ and that the points 
$y\in B_{1,\lambda}$
must all satisfy $m_y=1$. 
\end{theorem}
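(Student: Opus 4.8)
The plan is to derive the final statement directly from the formula for $\varepsilon_{1,\lambda}$ in part (i), once one observes that this formula is in fact an identity valid for \emph{every} $\lambda\in k\cup\{\infty\}$, not only for $\lambda\in\Lambda_M$. The proof of part (i) computes $\varepsilon_{1,\lambda}$ as the difference between $\dim_k\bigl(M^{(2),(a:b)}/M^{(1)}\bigr)$ at $\lambda=a/b$ and its value at a generic point of $\mathbb{P}^1_k$: in the decomposition of Definition \ref{def:decompose} the only summands whose contribution to this dimension depends on $(a:b)$ are the modules $N_{2\ell_h,\lambda_h}$, and their total contribution increases by exactly $\varepsilon_{1,\lambda}$ when $a:b$ specializes to $\lambda$. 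Equating this increase with the corresponding difference of the $r_1$-values supplied by Proposition \ref{prop:dims} yields the displayed right-hand side, and nowhere does this comparison use $\lambda\in\Lambda_M$. Since $\varepsilon_{1,\lambda}=\#\{h:\lambda_h=\lambda\}$ by Definition \ref{def:decompose}, we have $\varepsilon_{1,\lambda}=0$ precisely when $\lambda\notin\Lambda_M$.

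Now I would fix $\lambda\in\Lambda_{\mathrm{br}}-\Lambda_M$, so that $\varepsilon_{1,\lambda}=0$. Every summand on the right-hand side of the formula in part (i) is a non-negative integer, so each summand must vanish; I then split according to whether $\lambda\in\{0,1,\infty\}$. If $\lambda\notin\{0,1,\infty\}$, then $B_{3,\lambda}=\emptyset$ automatically, since by Notation \ref{not:twocases}(ii) every $y\in B_3$ has $\lambda_y\in\{0,1,\infty\}$. The vanishing of $\sum_{y\in B_{2,\lambda}}\delta_y$ together with $\delta_y\ge 1$ for $y\in B_2$ (Notation \ref{not:branchpoints}) forces $B_{2,\lambda}=\emptyset$. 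Finally, the vanishing of $\bigl\lfloor\frac{2m_y+3}{4}\bigr\rfloor-\bigl\lfloor\frac{m_y+3}{4}\bigr\rfloor$ for each $y\in B_{1,\lambda}$ forces $m_y=1$: by (\ref{eq:duh1new}) this difference equals $\lfloor m_y/4\rfloor+1\ge 1$ when $m_y\equiv 3\bmod 4$, and equals $\lfloor m_y/4\rfloor$ when $m_y\equiv 1\bmod 4$, which vanishes only for the positive odd integer $m_y=1$.

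If instead $\lambda\in\{0,1,\infty\}$, then $B_{1,\lambda}=B_{2,\lambda}=\emptyset$ automatically, since by Notation \ref{not:twocases}(i) every $y\in B_1\cup B_2$ satisfies $\lambda_y\in k-\{0,1\}$; in particular the assertion about $m_y=1$ is vacuous. The vanishing of $\sum_{y\in B_{3,\lambda}}\frac{M_y-m_y}{2}$, combined with $m_y<M_y$ for $y\in B_3$ (the defining inequality of Remark \ref{rem:twocases}(ii)), forces $B_{3,\lambda}=\emptyset$. In both cases we obtain $B_{2,\lambda}\cup B_{3,\lambda}=\emptyset$ and $m_y=1$ for all $y\in B_{1,\lambda}$, as required. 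The only genuinely delicate point is the first paragraph: one must verify that the proof of part (i) really produces an identity for all $\lambda$, equivalently that $\dim_k(M^{(2),(a:b)}/M^{(1)})$ increases by exactly $\varepsilon_{1,\lambda}$ and by nothing else as $a:b$ specializes to $\lambda$; granting this, the remainder is a routine non-negativity argument.
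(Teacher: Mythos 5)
Your proposal is correct, and its core is exactly the paper's own argument: the paper likewise establishes the formula for $\varepsilon_{1,\lambda}$ as an identity for \emph{every} $\lambda$ (its equations (\ref{eq:oy2}) and (\ref{eq:oy3}) are derived for arbitrary $a/b\not\in\{0,1,\infty\}$ and arbitrary $a/b\in\{0,1,\infty\}$, with no assumption that $\lambda\in\Lambda_M$), by comparing $r_1$ from Proposition \ref{prop:dims} with $r_1=\varepsilon_{1,\lambda}+\varepsilon_2+1$ obtained from Table \ref{tab:succdims}; it then reads off $\Lambda_M\subseteq\Lambda_{\mathrm{br}}$ and the final claim by precisely your non-negativity argument ($\delta_y\ge 1$ on $B_2$, $M_y>m_y$ on $B_3$, and (\ref{eq:duh1new}) forcing $m_y=1$ on $B_{1,\lambda}$). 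The ``delicate point'' you flag is immediate from Table \ref{tab:succdims}, since the $N_{2n,\lambda}$ rows are the only ones whose $\dim_k U^{(2)}/U^{(1)}$ depends on $(a:b)$; your difference-of-evaluations packaging is just the subtraction of the paper's two equations (the generic evaluation, which is its part (ii), and the special one), so it is a mild streamlining rather than a different route. The one genuine omission is that you never prove parts (ii), (iii), (iv): the paper gets (iv) from $1=r_3=\varepsilon_5-g(Y)+1$, and (ii), (iii) by evaluating $r_1$ and $r_0-r_2$ at a generic $\lambda\not\in\Lambda_{\mathrm{br}}\cup\Lambda_M\cup\{0,1,\infty\}$ (which exists since $k$ is infinite and these sets are finite) — the same comparison you already set up, so these should be recorded explicitly to make the proof of the full theorem complete.
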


\begin{proof}
Let $M=\HH^0(X,\Omega_{X/k})$ be as in Definition \ref{def:decompose}, and fix $(a:b)\in\mathbb{P}^1_k$. We see from Proposition \ref{prop:dims} and the second row of Table \ref{tab:succdims} that
$$1=r_3=\mathrm{dim}_k (M^{(4)}/M^{(3)})-g(Y)+1 = \varepsilon_5-g(Y)+1$$
which shows part (iv) of Theorem \ref{thm:mostgeneral}. Using this and the remainder of Table \ref{tab:succdims}, we obtain the following values of $r_i = \mathrm{dim}_k (M^{(i+1)}/M^{(i)})-\epsilon_5+1$ for $0\le i\le 2$:
\begin{eqnarray*}
r_0 &=& \sum_{h=1}^{\varepsilon_1}  \ell_h + \sum_{i=1}^{\varepsilon_2} n_i +\sum_{j=1}^{\varepsilon_3} (n'_j+1) + \varepsilon_4 + 1 ,\\[2ex]
r_1 &=& \varepsilon_{1,\lambda} + \varepsilon_2 + 1 
,\\[2ex]
r_2 &=& \displaystyle -\varepsilon_{1,\lambda} + \sum_{h=1}^{\varepsilon_1} \ell_h +  \sum_{i=1}^{\varepsilon_2} n_i +\sum_{j=1}^{\varepsilon_3} n'_j + 1, 
\end{eqnarray*}
where $\lambda=a/b\in k\cup\{\infty\}$.

If 
$\lambda=a/b$ does not lie in $\Lambda_{\mathrm{br}}\cup\Lambda_M$
then the equations for $r_0-r_2$ above and in Proposition \ref{prop:dims} show part (iii) of Theorem \ref{thm:mostgeneral}.

If 
$\lambda=a/b$ does not lie in $\Lambda_{\mathrm{br}}\cup\Lambda_M$
then the equations for $r_1$ above and in Proposition \ref{prop:dims} show 
\begin{equation}
\label{eq:oy1}
\varepsilon_2 + 1 = \sum_{y\in Y_{\mathrm{br}}} \left\lfloor\frac{m_y+3}{4}\right\rfloor
\end{equation}
proving part (ii) of Theorem \ref{thm:mostgeneral}. 

Using (\ref{eq:oy1}) and the equation for $r_1$ in Proposition \ref{prop:dims}, we then obtain for 
$\lambda=a/b\not\in\{0,1,\infty\}$
that
\begin{equation}
\label{eq:oy2}
\varepsilon_{1,\lambda} = \sum_{y\in B_{1,\lambda}}\left(\left\lfloor \frac{2m_y+3}{4}\right\rfloor - \left\lfloor \frac{m_y+3}{4}\right\rfloor \right) + \sum_{y\in B_{2,\lambda}}\delta_y.
\end{equation}
If 
$\lambda=a/b\in\{0,1,\infty\}$
then (\ref{eq:oy1}) and the equation for $r_1$ in Proposition \ref{prop:dims} imply that
\begin{equation}
\label{eq:oy3}
\varepsilon_{1,\lambda} = \sum_{y\in B_{3,\lambda}}\frac{M_y-m_y}{2} .
\end{equation}
In particular, (\ref{eq:oy2}) and (\ref{eq:oy3}) imply that $\Lambda_M\subseteq \Lambda_{\mathrm{br}}$. Moreover, if $\lambda\in\Lambda_{\mathrm{br}}-\Lambda_M$, then we see that
$B_{2,\lambda}\cup B_{3,\lambda}=\emptyset$, and the points $y\in B_{1,\lambda}$ must all satisfy $m_y=1$. This proves part (i) and the last statement of Theorem \ref{thm:mostgeneral}.
\end{proof}

In general, Theorem \ref{thm:mostgeneral} is not sufficient to determine the precise decomposition of $M=\HH^0(X,\Omega_{X/k})$ into a direct sum of indecomposable $kG$-modules, since it does not pin down the values of $\ell_h,n_i,n'_j$ for $1\le h \le \varepsilon_1,1\le i\le \varepsilon_2,1\le j\le \varepsilon_3$, in the notation of Definition \ref{def:decompose}. In the next result, we discuss several cases in which these values can be determined; see also Remark \ref{rem:candomore} for some further discussion.

\begin{theorem}
\label{thm:onlyB1orB2B3small}
Under Assumptions $\ref{ass:general}$ and $\ref{ass:totallyramified}$, let $M=\HH^0(X,\Omega_{X/k})$ be as in Definition $\ref{def:decompose}$, and suppose 
that for all $y\in Y_{\mathrm{br}}$, one of the following conditions holds:
\begin{itemize}
\item[(a)] $y\in B_1$, or
\item[(b)] $y\in B_2$ and $\delta_y=\left\lfloor \frac{2m_y+3}{4}\right\rfloor - \left\lfloor \frac{m_y+3}{4}\right\rfloor$, or
\item[(c)] $y\in B_{3,\lambda}$, for $\lambda\in\{0,1,\infty\}$, and $m_y=1$.
\end{itemize}
Then $\ell_h=1=n_i$ for all $1\le h \le \varepsilon_1,1\le i\le \varepsilon_2$, and $\varepsilon_3=0$. We obtain 
$$M\cong \bigoplus_{\lambda\in \Lambda_{\mathrm{br}}} N_{2,\lambda}^{\oplus a_\lambda}\; \oplus\; M_{3,1}^{\oplus b} \;\oplus\; k^{\oplus c} \;\oplus\; kG^{\oplus d}$$
where 
$$a_\lambda = \sum_{y\in Y_{\mathrm{br}},\lambda_y=\lambda}\left(\textstyle{\left\lfloor \frac{2m_y+3}{4}\right\rfloor} - \textstyle{\left\lfloor \frac{m_y+3}{4}\right\rfloor}+\textstyle{\frac{M_y-m_y}{2}} \right),$$
$b= \left(\sum_{y\in Y_{\mathrm{br}}} \left\lfloor \frac{m_y+3}{4}\right\rfloor\right)-1$, $c= \sum_{y\in Y_{\mathrm{br}}} \left(\left\lfloor \frac{3m_y+3}{4}\right\rfloor - \left\lfloor \frac{2m_y+3}{4}\right\rfloor\right)$, and $d=g(Y)$. Moreover, if $y\in Y_{\mathrm{br}}$ satisfies condition $(\mathrm{b})$ above then we must have $m_y\equiv 1\mod 4$ and $\delta_y=\left\lfloor \frac{m_y}{4}\right\rfloor$.
\end{theorem}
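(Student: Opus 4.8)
The plan is to promote Theorem \ref{thm:mostgeneral} — which already determines the \emph{numbers} $\varepsilon_{1,\lambda}$, $\varepsilon_2$, $\varepsilon_3+\varepsilon_4$, $\varepsilon_5$ of each type of summand, but not the individual dimensions $\ell_h,n_i,n'_j$ — to control of those dimensions by a single counting argument: I would show that the actual value of $\sum_h\ell_h+\sum_i n_i+\sum_j n'_j$ equals the \emph{smallest} value it could possibly take given the known summand counts, which forces every $\ell_h$ and $n_i$ to equal $1$ and forces $\varepsilon_3$ to vanish. To set this up I reuse the expression for $r_0$ in terms of the decomposition derived inside the proof of Theorem \ref{thm:mostgeneral}, together with parts (ii) and (iii) of that theorem and the value $r_0=\sum_{y\in Y_{\mathrm{br}}}(\lfloor \frac{3m_y+3}4\rfloor+\frac{M_y-m_y}2)$ supplied by Proposition \ref{prop:dims}.

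The first step is purely formal. Writing $S_0:=\sum_y\lfloor\frac{3m_y+3}4\rfloor$, $S_1:=\sum_y\lfloor\frac{m_y+3}4\rfloor$, $S_2:=\sum_y\lfloor\frac{2m_y+3}4\rfloor$ and $\Delta:=\sum_y\frac{M_y-m_y}2$ (sums over $Y_{\mathrm{br}}$), the $r_0$-identity becomes $\sum_h\ell_h+\sum_i n_i+\sum_j(n'_j+1)+\varepsilon_4+1=S_0+\Delta$, and subtracting Theorem \ref{thm:mostgeneral}(iii), namely $\varepsilon_3+\varepsilon_4=S_0-S_2$, yields
\[
\sum_{h}\ell_h+\sum_i n_i+\sum_j n'_j=S_2+\Delta-1 .
\]
Since each $\ell_h,n_i,n'_j\ge 1$, and there are $\varepsilon_1=\sum_\lambda\varepsilon_{1,\lambda}$, $\varepsilon_2$, and $\varepsilon_3$ of them respectively, the left-hand side is bounded below by $\varepsilon_1+\varepsilon_2$. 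Hence the whole argument hinges on the identity $\varepsilon_1+\varepsilon_2=S_2+\Delta-1$: once this holds the lower bound is attained, so each inequality $\ell_h\ge 1$ and $n_i\ge 1$ is forced to be an equality and $\sum_j n'_j=0$ forces $\varepsilon_3=0$, giving $\ell_h=1=n_i$ and $\varepsilon_3=0$ as desired. Then $\varepsilon_4=(\varepsilon_3+\varepsilon_4)-\varepsilon_3=S_0-S_2=c$, $\varepsilon_5=g(Y)=d$, and $\varepsilon_2=S_1-1=b$ follow at once from Theorem \ref{thm:mostgeneral}.

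The crux, and the one place where hypotheses (a)--(c) enter, is the evaluation $\varepsilon_1=S_2-S_1+\Delta$ (the companion identity $\varepsilon_2=S_1-1$ being exactly Theorem \ref{thm:mostgeneral}(ii)). I would sum the two-case formula for $\varepsilon_{1,\lambda}$ from Theorem \ref{thm:mostgeneral}(i) over all $\lambda$ to get $\varepsilon_1=\sum_{B_1}(\lfloor\frac{2m_y+3}4\rfloor-\lfloor\frac{m_y+3}4\rfloor)+\sum_{B_2}\delta_y+\sum_{B_3}\frac{M_y-m_y}2$, noting that $B_1\cup B_2$ consists of points with $m_y=M_y$ by Remark \ref{rem:twocases}(i), so that $\Delta=\sum_{B_3}\frac{M_y-m_y}2$. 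Comparing with $S_2-S_1+\Delta=\sum_{y}(\lfloor\frac{2m_y+3}4\rfloor-\lfloor\frac{m_y+3}4\rfloor)+\Delta$, the two expressions agree precisely when $\sum_{B_2}\delta_y=\sum_{B_2}(\lfloor\frac{2m_y+3}4\rfloor-\lfloor\frac{m_y+3}4\rfloor)$ and $\sum_{B_3}(\lfloor\frac{2m_y+3}4\rfloor-\lfloor\frac{m_y+3}4\rfloor)=0$. The first equality is condition (b) applied to each $y\in B_2$; the second holds because condition (c) forces $m_y=1$ on $B_3$, whence the floor difference is $\lfloor\frac54\rfloor-\lfloor\frac44\rfloor=0$. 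I expect this to be the main obstacle, since it requires carefully matching the $\lambda\in\{0,1,\infty\}$ and generic-$\lambda$ cases of Theorem \ref{thm:mostgeneral}(i) against the definition of $a_\lambda$; the very same computation also shows $a_\lambda=\varepsilon_{1,\lambda}$ for every $\lambda$, which yields the stated multiplicities.

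Finally, for the supplementary claim about points satisfying (b), I would combine condition (b), $\delta_y=\lfloor\frac{2m_y+3}4\rfloor-\lfloor\frac{m_y+3}4\rfloor$, with the evaluation of this floor difference in $(\ref{eq:duh1new})$ and the bound $\delta_y\le\lfloor\frac{m_y}4\rfloor$ from Notation \ref{not:twocases}(i). If $m_y\equiv 3\bmod 4$, then $(\ref{eq:duh1new})$ gives $\delta_y=\lfloor\frac{m_y}4\rfloor+1>\lfloor\frac{m_y}4\rfloor$, contradicting that bound; hence $m_y\equiv 1\bmod 4$ and $\delta_y=\lfloor\frac{m_y}4\rfloor$, as claimed.
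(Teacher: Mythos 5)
Your proposal is correct and follows essentially the same strategy as the paper's proof: both reduce to showing that, under hypotheses (a)--(c), the dimension budget $\sum_h\ell_h+\sum_i n_i+\sum_j n'_j$ from Proposition \ref{prop:dims} exactly equals the minimum $\varepsilon_1+\varepsilon_2$ forced by Theorem \ref{thm:mostgeneral}, so that positivity yields $\ell_h=n_i=1$ and $\varepsilon_3=0$, and both settle the supplementary claim on case (b) via equation (\ref{eq:duh1new}) and the bound $\delta_y\le\lfloor m_y/4\rfloor$. The only difference is bookkeeping: the paper gets the budget identity from $r_2-r_1$ evaluated at a generic $\lambda\notin\Lambda_{\mathrm{br}}$ (its equations (\ref{eq:dumb1})--(\ref{eq:dumb2})), while you get the equivalent identity by combining $r_0$ with Theorem \ref{thm:mostgeneral}(iii) --- the same linear relations rearranged, with the same key evaluation of $\varepsilon_1$ under (a)--(c).
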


\begin{proof}
Suppose $\lambda=a/b \not\in \Lambda_{\mathrm{br}}$. By Proposition \ref{prop:dims}, we obtain
\begin{equation}
\label{eq:dumb1}
r_2 - r_1= \sum_{y\in Y_{\mathrm{br}}} \left( \left\lfloor \frac{2m_y+3}{4}\right\rfloor - \left\lfloor \frac{m_y+3}{4}\right\rfloor +\frac{M_y-m_y}{2}\right).
\end{equation}
By Theorem \ref{thm:mostgeneral} and the assumptions (a)-(c) in Theorem \ref{thm:onlyB1orB2B3small}, it follows that the right side of (\ref{eq:dumb1}) equals $\varepsilon_1$. Using the equations for $r_1, r_2$ from the proof of Theorem \ref{thm:mostgeneral}, we see that
\begin{equation}
\label{eq:dumb2}
\sum_{h=1}^{\varepsilon_1}  \ell_h + \sum_{i=1}^{\varepsilon_2} (n_i-1) +\sum_{j=1}^{\varepsilon_3} n'_j = r_2-r_1.
\end{equation}
Hence we obtain that
$$\sum_{h=1}^{\varepsilon_1}  (\ell_h - 1) + \sum_{i=1}^{\varepsilon_2} (n_i-1) +\sum_{j=1}^{\varepsilon_3} n'_j = 0 .$$
This implies $\ell_h=1=n_i$ for all $1\le h \le \varepsilon_1,1\le i\le \varepsilon_2$, and $\varepsilon_3=0$. In case (b), equation (\ref{eq:duh1new}) and the inequality $\delta_y\le \left\lfloor \frac{m_y}{4}\right\rfloor$ shows that we must have $m_y\equiv 1$ mod $4$ and $\delta_y=\left\lfloor \frac{m_y}{4}\right\rfloor$. This completes the proof of Theorem \ref{thm:onlyB1orB2B3small}.
\end{proof}

\begin{example}
\label{ex:Nickthesis}
Suppose the field $k(X)$ is generated over $k(Y)$ by an element $T\in k(X)$ such that $T^4-T=\omega$ for an element $\omega\in k(Y)$ satisfying the following property: For each $y\in Y$, there exists $t(y)\in k(Y)$ such that $\mathrm{ord}_y(\omega-(t(y)^4-t(y)))$ is either (i) non-negative or (ii) negative and odd. Moreover, assume that (ii) occurs for at least one point $y\in Y$. 

This case was studied in \cite[Section 3]{CamachoThesis2021} and the precise decomposition of $\HH^0(X,\Omega_{X/k})$ into a direct sum of indecomposable $kG$-modules was determined. In particular, this case generalizes the main result of \cite{RzedowskiCVillaSMadan1996} when $p=2=n$ from $Y=\mathbb{P}^1_k$ to an arbitrary smooth projective curve $Y$.

We claim that this case implies that $Y_{\mathrm{br}} = B_1$ and $\Lambda_{\mathrm{br}}=\{\alpha\}$ where $\langle \alpha\rangle = \mathbb{F}_4^\times$, so that it is a subcase of Theorem \ref{thm:onlyB1orB2B3small}. To see this, let $u=T^2-T$ and $v=\alpha T^2-\alpha^2 T$. Then $u^2-u=\omega$ and $v^2-v=\alpha^2\omega$. Therefore, if $y\in Y$ is such that $\mathrm{ord}_y(\omega-(t(y)^4-t(y)))=-m_y$ is negative and odd, then $s_{p,y}:=t(y)^2-t(y)$, $s_{q,y}:=\alpha t(y)^2-\alpha^2 t(y)$ and $s_{p+q,y}:=\alpha^2 t(y)^2-\alpha t(y)$ satisfy
$$\mathrm{ord}_y(\omega-(s_{p,y}^2-s_{p,y}))=-m_y=\mathrm{ord}_y(\alpha^2\omega-(s_{q,y}^2-s_{p,y}))
=\mathrm{ord}_y(\alpha\omega-(s_{p+q,y}^2-s_{p+q,y})).$$
In other words, letting $p=\omega$, $q=\alpha^2\omega$, we see that the cover $\pi:X\to Y$ satisfies Assumption $\ref{ass:totallyramified}$ (see also \cite[Proposition 3.7.10]{Stichtenoth2009}). Moreover, if $y\in Y$ is a branch point and $x\in X$ lies above it, then, letting $u_y=u-s_{p,y}$ and $v_y=v-s_{q,y}$, we see that $w_y=v_y+\alpha u_y$ satisfies $\mathrm{ord}_x(w_y) = -m_y$. Therefore, we obtain $Y_{\mathrm{br}} = B_1$ and $\Lambda_{\mathrm{br}}=\{\alpha\}$. By Theorem \ref{thm:onlyB1orB2B3small}, it follows that
$$\HH^0(X,\Omega_{X/k})\cong N_{2,\alpha}^{\oplus a} \;\oplus\; M_{3,1}^{\oplus b} \;\oplus\; k^{\oplus c} \;\oplus\; kG^{\oplus d}$$
where $a = \sum_{y\in Y_{\mathrm{br}}} \left(\left\lfloor \frac{2m_y+3}{4}\right\rfloor - \left\lfloor \frac{m_y+3}{4}\right\rfloor \right)$, $b= \left(\sum_{y\in Y_{\mathrm{br}}} \left\lfloor \frac{m_y+3}{4}\right\rfloor\right)-1$, $c= \sum_{y\in Y_{\mathrm{br}}} \left(\left\lfloor \frac{3m_y+3}{4}\right\rfloor - \left\lfloor \frac{2m_y+3}{4}\right\rfloor\right)$, and $d=g(Y)$. 
\end{example}

The next two examples (Examples \ref{ex:oops1} and \ref{ex:oops2}) show that the list of isomorphism classes of indecomposable $kG$-modules that actually occur, for various $X$, as direct summands of $\HH^0(X,\Omega_{X/k})$ in the situation of Theorem \ref{thm:onlyB1orB2B3small} is infinite and given as follows:
\begin{equation}
\label{eq:list1}
\left\{N_{2,\lambda}\;:\;\lambda\in k \cup\{\infty\}\right\} \cup \left\{M_{3,1},k,kG\right\}.
\end{equation}
Moreover, we use these examples to show that the statement of \cite[Theorem 6.4]{MarquesWard2018} is incorrect.

Notice that $kG$ will occur with multiplicity at least $1$ as soon as the genus of $Y$ is bigger than 0. For this reason, we restrict ourselves to examples when $Y=\mathbb{P}^1_k$ to illustrate that the other indecomposable $kG$-modules in the set (\ref{eq:list1}) all do occur as direct summands of $\HH^0(X,\Omega_{X/k})$ for various $X$.

\begin{example}
\label{ex:oops1}
Let $Y=\mathbb{P}^1_k$ with function field $k(Y)=k(t)$, and fix $\lambda_0\in k-\{0,1\}$. We first provide an example in which $\HH^0(X,\Omega_{X/k})$ contains simultaneously as direct summands the 4 non-isomorphic 2-dimensional $kG$-modules $N_{2,\infty}$, $N_{2,0}$, $N_{2,1}$ and $N_{2,\lambda_0}$. Moreover, we will show that $k(X)$ is a global standard function field, in the sense of \cite[Definitions 2.1 and 3.1]{MarquesWard2018}. Hence, this example contradicts \cite[Theorem 6.4]{MarquesWard2018} which states that at most $3$ non-isomorphic indecomposable $kG$-modules that have $k$-dimensions $1$, $2$ and $3$, respectively, should occur as direct summands of $\HH^0(X,\Omega_{X/k})$.

Let $\alpha \in k-\{\lambda_0^{-1}\}$ satisfy the equation $\alpha^2+\alpha+1=0$, i.e. $\mathbb{F}_4=\{0,1,\alpha,\alpha^2\}$. Define $\beta:=(1+\alpha\lambda_0)^{-1}\in k-\{0,1,\alpha\}$. Let $p$ and $q$ in (\ref{eq:field}) be given by
\begin{eqnarray*}
p&=&\frac{1}{t(t-1)^3(t-\alpha)^3(t-\beta)^3},\\
q&=&\frac{\alpha}{t^3(t-1)(t-\alpha)^3(t-\beta)^3} ,
\end{eqnarray*}
which implies
$$p+q
=\frac{\alpha^2}{t^3(t-1)^3(t-\alpha)(t-\beta)^3}.$$
For $\mu\in k\cup\{\infty\}$, let $y_\mu$ be the corresponding closed point in $\mathbb{P}^1_k$ with uniformizer $\pi_{y_\mu}=t-\mu$ for $\mu\in k$ and $\pi_{y_\infty} = t^{-1}$. Then
$$\mathrm{ord}_{y_\mu}(p), \mathrm{ord}_{y_\mu}(q), \mathrm{ord}_{y_\mu}(p+q)\in \{-1,-3\}\quad
\mbox{if $\mu\in 
\{0,1,\alpha,\beta\}
$,}$$
and $\mathrm{ord}_{y_\mu}(p), \mathrm{ord}_{y_\mu}(q), \mathrm{ord}_{y_\mu}(p+q)$ are non-negative for all 
$\mu \in k\cup\{\infty\}-\{0,1,\alpha,\beta\}$.
In particular, $Y_{\mathrm{br}} = \{y_0,y_1,y_\alpha,y_{\beta}\}$, and $m_{y_\mu}=1,M_{y_\mu}=3$ for $\mu\in\{0,1,\alpha\}$, and $m_{y_{\beta}}=M_{y_{\beta}}=3$. Define $w:=v+  \alpha^2\,\frac{t-1}{t} \,u$. Then 
$$w^2 - w = 
\left(\alpha\,\frac{(t-1)^2}{t^2} - \alpha^2\,\frac{t-1}{t} \right)\, u
= \frac{(t-1)(t-\alpha)}{t^2}\,u.$$
Hence, if $x_\mu\in X$ lies above $y_\mu$, for $\mu\in k\cup\{\infty\}$, we obtain 
$$\mathrm{ord}_{x_\mu}(w) = \left\{\begin{array}{cl} -5 & \mbox{if } \mu=0,\\ -3 & \mbox{if } \mu=\beta,\\ -1 & \mbox{if } \mu\in\{1,\alpha\},\end{array}\right. $$
and $\mathrm{ord}_{x_\mu}(w)$ is non-negative for all $\mu\in k\cup\{\infty\}-\{0,1,\alpha,\beta\}$. In other words, $k(X)/k(t)$ is a global standard function field, in the sense of \cite[Definitions 2.1 and 3.1]{MarquesWard2018}.  Since $m_{y_\beta}=3$ and $\alpha^2\,\frac{\beta-1}{\beta}=\lambda_0$, it follows from Notations \ref{not:twocases} and \ref{not:branchpoints} that $\delta_{y_\beta}=0$ and $\lambda_{y_\beta}=\lambda_0$. Therefore,
$$y_0\in B_{3,\infty}, \quad y_1\in B_{3,0},\quad y_{\alpha}\in B_{3,1},\quad\mbox{and}\quad
y_{\beta}\in B_{1,\lambda_0}.$$
Since $m_{y_\mu}=1$ for $\mu\in\{0,1,\alpha\}$, we have that $y_0$, $y_1$, $y_\alpha$ satisfy condition (c) of Theorem \ref{thm:onlyB1orB2B3small}, whereas $y_{\beta}$ satisfies condition (a) of this theorem. Hence this theorem shows that
$$\HH^0(X,\Omega_{X/k})\cong  N_{2,\infty} \;\oplus\; N_{2,0} \;\oplus\; N_{2,1} \;\oplus\; N_{2,\lambda_0} 
\;\oplus\; M_{3,1}^{\oplus 3} \;\oplus\; k$$
showing that there are $6$ non-isomorphic indecomposable $kG$-modules that occur as direct summands of $\HH^0(X,\Omega_{X/k})$, including $4$ non-isomorphic indecomposable $kG$-modules of $k$-dimension $2$. 
\end{example}

\begin{example}
\label{ex:oops2}
Let $Y=\mathbb{P}^1_k$ with function field $k(Y)=k(t)$. Given a positive integer $n$, we next provide an example in which there are $n+2$ non-isomorphic indecomposable $kG$-modules that occur as direct summands of $\HH^0(X,\Omega_{X/k})$. Moreover, we will show that $k(X)$ is a global standard function field, in the sense of \cite[Definitions 2.1 and 3.1]{MarquesWard2018}. As in Example \ref{ex:oops1}, we obtain a contradiction to \cite[Theorem 6.4]{MarquesWard2018} provided $n\ge 2$.

Let 
$\lambda_1,\ldots,\lambda_n \in k-\{0,1\}$ be $n$ distinct elements,
and let $p$ and $q$ in (\ref{eq:field}) be given by
\begin{eqnarray*}
p&=&\frac{1}{(t-\lambda_1)^5\cdots (t-\lambda_n)^5},\\
q&=&\frac{t^2}{(t-\lambda_1)^5\cdots (t-\lambda_n)^5} .
\end{eqnarray*}
For $1\le i\le n$, let $y_i$ be the point in $\mathbb{A}^1_k\subset \mathbb{P}^1_k$ corresponding to $\lambda_i$ with uniformizer $\pi_{y_i}=t-\lambda_i$. Then
$$\mathrm{ord}_{y_i}(p) = -5 = \mathrm{ord}_{y_i}(q) = \mathrm{ord}_{y_i}(p+q).$$
This means that $Y_{\mathrm{br}}=\{y_1,\ldots,y_n\}$, and $m_{y_i}=5=M_{y_i}$ for all $1\le i\le n$. Define
$w:=v+  t \,u$. Then 
$$w^2 - w = 
t ( t -1) u.$$
Since none of the $y_i$ correspond to either $0$ or $1$ in $k$, we obtain, for $1\le i \le n$, that $\mathrm{ord}_{x_i}(w) = -5$ when $x_i\in X$ lies above $y_i$. In other words, $k(X)/k(t)$ is a global standard function field, in the sense of \cite[Definitions 2.1 and 3.1]{MarquesWard2018}. 

Let $i\in\{1,\ldots,n\}$.
Using Remark \ref{rem:twocases}, it follows that $\beta_{y_i} = \lambda_i + \pi_{y_i}$. Therefore, in Notation \ref{not:twocases}, we have $\lambda_{y_i}=\lambda_i$ and $\delta_{y_i}=1$. 
Since $\left\lfloor \frac{2m_{y_i}+3}{4}\right\rfloor - \left\lfloor \frac{m_{y_i}+3}{4}\right\rfloor = 1$, all $y_i$ satisfy condition (b) of Theorem \ref{thm:onlyB1orB2B3small}. Hence this theorem shows that
$$\HH^0(X,\Omega_{X/k})\cong  N_{2,\lambda_1}\;\oplus\; \cdots \;\oplus\; N_{2,\lambda_n} 
\;\oplus\; M_{3,1}^{\oplus (2n-1)} \;\oplus\; k^{\oplus n}.$$
In particular, there are $n+2$ non-isomorphic indecomposable $kG$-modules that occur as direct summands of $\HH^0(X,\Omega_{X/k})$, including $n$ non-isomorphic indecomposable $kG$-modules of $k$-dimension $2$.
\end{example}

Theorem \ref{thm:awkward1} follows from Remarks \ref{rem:twocases} and \ref{rem:awkward1follows}, Theorem \ref{thm:onlyB1orB2B3small}, and Examples \ref{ex:oops1} and \ref{ex:oops2}. Note that $y\in Y_{\mathrm{br}}$ satisfies condition (ii) in Theorem \ref{thm:awkward1} if and only if  $y$ satisfies either condition (a) or condition (c) in Theorem \ref{thm:onlyB1orB2B3small} (see Remark \ref{rem:awkward1follows}).

\begin{remark}
\label{rem:wrongMarquesWard}
In the proof of \cite[Lemma 6.3]{MarquesWard2018}, the first of the displayed equations for $\sigma^h(w_{\mu,\nu})$ is not correct in general. As a consequence, the $k$-vector space $\Delta_{\mu,\nu}$ defined in \cite[Theorem 6.4]{MarquesWard2018} is not always a $kG$-module. 
Examples \ref{ex:oops1} and \ref{ex:oops2} provide specific examples where this happens.
\end{remark}

\begin{remark}
\label{rem:candomore}
In all cases that are not covered by conditions (a)-(c) in Theorem \ref{thm:onlyB1orB2B3small}, it is tougher to find the actual values of $\ell_h, n_i, n'_j$ for $1\le h \le \varepsilon_1,1\le i\le \varepsilon_2,1\le j\le \varepsilon_3$, in the notation of Definition \ref{def:decompose}. However, here is a situation in which we can say more. Namely, using the notation of Theorem \ref{thm:mostgeneral}, suppose
\begin{equation}
\label{eq:sodumb!}
\sum_{h=1}^{\varepsilon_1} \ell_h \ge \sum_{y\in Y_{\mathrm{br}}} \left( \textstyle{\left\lfloor \frac{2m_y+3}{4}\right\rfloor} - \textstyle{\left\lfloor \frac{m_y+3}{4}\right\rfloor} + \textstyle{\frac{M_y-m_y}{2}}\right).
\end{equation}
Using the same arguments as for equations (\ref{eq:dumb1}) and (\ref{eq:dumb2}) in the proof of Theorem \ref{thm:onlyB1orB2B3small}, we then obtain that
$$ \sum_{i=1}^{\varepsilon_2} (n_i-1) +\sum_{j=1}^{\varepsilon_3} n'_j \le 0$$
which implies that we must have $n_i=1$ for all $1\le i\le \varepsilon_2$, and $\varepsilon_3=0$. In particular, we must then have 
$$\varepsilon_4 = \displaystyle \sum_{y\in Y_{\mathrm{br}}} \left(\left\lfloor \frac{3m_y+3}{4}\right\rfloor - \left\lfloor \frac{2m_y+3}{4}\right\rfloor\right).$$
In \S \ref{s:KleinfourP1}, we will show that if $Y=\mathbb{P}^1_k$ then the inequality in (\ref{eq:sodumb!}) is satisfied and actually an equality, and we will give the precise description of the $kG$-module structure of $\HH^0(X,\Omega_{X/k})$ in this case (see Theorem \ref{thm:P1main}). 
\end{remark}


\section{Klein four covers of the projective line}
\label{s:KleinfourP1}

Suppose $Y=\mathbb{P}^1_k$ and $\pi:X\to \mathbb{P}^1_k$ is a $G$-cover satisfying Assumptions \ref{ass:general} and \ref{ass:totallyramified}. In this section, we determine the precise $kG$-module structure of $\HH^0(X,\Omega_{X/k})$. 

We use the following notation. 

Write the function field $K=k(Y)=k(t)$ for a variable $t$. For $\mu\in k\cup\{\infty\}$, let $y_\mu$ be the corresponding closed point in $\mathbb{P}^1_k$ with uniformizer
\begin{equation}
\label{eq:uniformizer}
\pi_{y_\mu}=\left\{\begin{array}{cl}
t^{-1} & \mbox{ if $\mu=\infty$},\\
t-\mu & \mbox{ if $\mu\ne \infty$}.
\end{array}\right.
\end{equation}
For simplicity, we will write $\infty$ instead of $y_\infty$ in what follows.
If $f$ is any rational function in $K=k(t)$, then we can use its partial fraction decomposition to bring $f$ into standard form for all the primes of $k(t)$ (as described by Hasse in \cite{HasseCrelle1935}). More precisely, since $k$ is algebraically closed of characteristic 2, there exists a rational function $s_f\in K=k(t)$ such that
\begin{equation}
\label{eq:Hassestandard}
f- (s_f^2-s_f)=\frac{N_f(t)}{\prod_i D_{f,i}(t)^{m_{f,i}}}
\end{equation}
where $D_{f,i}(t)=t-a_{f,i}$ for pairwise distinct $a_{f,i}\in  k$, $m_{f,i}$ are positive odd integers, and the polynomial $N_f(t)$ is relatively prime to the denominator.
Because of Assumption \ref{ass:totallyramified} and Remark \ref{rem:twocases}, we can change the variable $t$ to be able to make the following assumptions throughout this section.

\begin{assume}
\label{ass:overP1}
Let $Y=\mathbb{P}^1_k$ with function field $K=k(Y)=k(t)=k(t^{-1})$. Let $p,q\in K-k$ be as in (\ref{eq:field}), and define
\begin{equation}
\label{eq:r!}
r:=p+q.
\end{equation} 
There exist $n\ge 0$ distinct elements $\lambda_1,\ldots,\lambda_n\in k-\{0\}$ with corresponding closed points $y_1,\ldots, y_n\in\mathbb{A}^1_k-\{0\}\subset\mathbb{P}^1_k$ such that the following is true: 

For $f\in\{p,q,r\}$, there exists a rational  function $s_f$ in $K$ such that
\begin{equation}
\label{eq:frational}
f - (s_f^2-s_f) = \frac{f_1(t^{-1})}{f_2(t^{-1})}
\end{equation}
where $f_1(t^{-1}),f_2(t^{-1})$ are relatively prime polynomials  in $k[t^{-1}]$, and there exist positive odd integers $m_{f,\infty},m_{f,y_1},\ldots,m_{f,y_n}$ such that
\begin{equation}
\label{eq:fdenominator}
f_2(t^{-1}) = (t^{-1})^{m_{f,\infty}}(t^{-1}-\lambda_1^{-1})^{m_{f,y_1}} \cdots (t^{-1}-\lambda_n^{-1})^{m_{f,y_n}}.
\end{equation}
\end{assume}

\begin{remark}
\label{rem:overP1}
Under Assumptions \ref{ass:general} and \ref{ass:overP1}, the finite set of branch points is given as 
$$Y_{\mathrm{br}}=\{\infty,y_1,\ldots,y_n\} .$$
Moreover, if $y\in Y_{\mathrm{br}}$ then $\{m_y,M_y\} = \{m_{p,y},m_{q,y},m_{r,y}\}$, and if $m_y < M_y$ then precisely two of $m_{p,y}$, $m_{q,y}$, $m_{r,y}$ are equal to $M_y$. 
For $f\in\{p,q,r\}$, suppose $d_f$ is the degree of $f_1(t^{-1})$ in $t^{-1}$, and write
\begin{equation}
\label{eq:numerator}
f_1(t^{-1}) = c_{f,d_f}+ \cdots + c_{f,1} t^{-d_f+1}  + c_{f,0} t^{-d_f} 
\end{equation}
in $k[t^{-1}]$ where $c_{f,0}\ne 0$. Since $t^{-1}$ is not a factor of $f_1(t^{-1})$, we also have that $c_{f,d_f}\ne 0$. By multiplying both $f_2$ in (\ref{eq:fdenominator}) and $f_1$ in (\ref{eq:numerator}) by the same appropriate non-zero scalar and  power of $t$, we can rewrite (\ref{eq:frational}) as
$$f - (s_f^2-s_f)= t^{m_{f,\infty}+m_{f,y_1}+\cdots + m_{f,y_n}-d_f} \left(\lambda_1^{m_{f,y_1}} \cdots \lambda_n^{m_{f,y_n}} \right) \frac{c_{f,d_f}t^{d_f} + \cdots + c_{f,1} t + c_{f,0}}{ (t-\lambda_1)^{m_{f,y_1}} \cdots (t-\lambda_n)^{m_{f,y_n}}} .$$
Since $0\not\in Y_{\mathrm{br}}$, it follows that 
\begin{equation}
\label{eq:rationaldegree}
d_f \le m_{f,\infty}+\sum_{i=1}^n m_{f,y_i}\qquad \mbox{for $f\in\{p,q,r\}$.}
\end{equation}
\end{remark}

\begin{remark}
\label{rem:needoverP1}
Let $u,v,p,q,r,s_p,s_q,s_r$ be as in Assumption \ref{ass:overP1}, where $r=p+q$. Define
\begin{equation}
\label{eq:yikes}
\left\{ \quad\begin{array}{lcllcl}
\widetilde{u}&:=&u-s_p, & \widetilde{p}&:=&p-(s_p^2-s_p),\\
\widetilde{v}&:=&v-s_q, & \widetilde{q}&:=&q-(s_q^2-s_q),\\
\widetilde{u+v}&:=&u+v-s_r, & \widetilde{r}&:=&r-(s_r^2-s_r) .
\end{array}\right.
\end{equation}
Fix $y\in Y_{\mathrm{br}}$. By Remarks \ref{rem:twocases} and \ref{rem:overP1}, we obtain the following values for $m_y$ and $M_y$:
\begin{itemize}
\item
If $m_{p,y}= m_{q,y}=m_{r,y}$ or $m_{p,y}< m_{q,y}=m_{r,y}$, then $m_y=m_{p,y}$ and $M_y=m_{q,y}=m_{r,y}$, and
$$u_y=\widetilde{u}, \; p_y=\widetilde{p}, \qquad v_y=\widetilde{v}, \; q_y=\widetilde{q}.$$
\item
If $m_{q,y}< m_{p,y}=m_{r,y}$, then $m_y=m_{q,y}$ and $M_y=m_{p,y}=m_{r,y}$, and
$$u_y=\widetilde{v}, \; p_y=\widetilde{q}, \qquad v_y=\widetilde{u}, \; q_y=\widetilde{p}.$$
\item
If $m_{r,y}< m_{p,y}=m_{q,y}$, then $m_y=m_{r,y}$ and $M_y=m_{p,y}=m_{q,y}$, and
$$u_y=\widetilde{u+v}, \; p_y=\widetilde{r}, \qquad v_y=\widetilde{v}, \; q_y=\widetilde{q}.$$
\end{itemize}
\end{remark}

\begin{remark}
\label{rem:holodiffoverP1}
For each $y\in Y=\mathbb{P}^1_k$, let $\pi_y$ be the corresponding uniformizer, as defined in (\ref{eq:uniformizer}). 
Then the differentials of the uniformizers are given as
\begin{equation}
\label{eq:diffuni}
d\pi_y = \left\{\begin{array}{cl} t^{-2} dt & \mbox{if } y=\infty,\\
dt & \mbox{if } y\neq \infty.
\end{array}\right.
\end{equation}
The stalk of $\Omega_{Y/k}$ at $y$ is given as
$$(\Omega_{Y/k})_y=\mathcal{O}_{Y,y}\,d\pi_y = \{ h\,dt\; : \; h \in k(Y), \; \mathrm{ord}_y(h) \ge -k_y\}$$
where 
\begin{equation}
\label{eq:ky}
k_y = \left\{\begin{array}{rl}
-2 & \mbox{ if $y=\infty$},\\
0 & \mbox{ if $y\ne \infty$}.
\end{array}\right.
\end{equation}
It follows from the proof of \cite[Proposition IV.2.3]{Hartshorne1977} that there is a natural isomorphism of $\mathcal{O}_X$-$G$-modules
$$\Omega_{X/k} = \pi^* \Omega_{Y/k}\otimes_{\mathcal{O}_X} \mathcal{D}_{X/Y}^{-1}.$$
In particular, we obtain that if $x_y\in X$ lies above $y\in Y$ then the stalk at $x_y$ equals
\begin{eqnarray*}
(\Omega_{X/k})_{x_y} &=& (\pi^* \Omega_{Y/k})_{x_y}\otimes_{\mathcal{O}_{X,x_y}} (\mathcal{D}_{X/Y}^{-1})_{x_y}\\
&=& \{ f\,dt\; : \; f \in k(X), \; \mathrm{ord}_{x_y}(f) \ge -e_{x_y/y}k_y-d_{x_y/y}\}
\end{eqnarray*}
where $e_{x_y/y}$ is the ramification index and $d_{x_y/y}$ is the different exponent. We have $e_{x_y/y}=1$ and $d_{x_y/y}=0$ if $y\not\in Y_{\mathrm{br}}$. Moreover, we have
\begin{equation}
\label{eq:differentP1}
e_{x_y/y} = 4\quad\mbox{ and }\quad d_{x_y/y} = 3(m_y+1)+2(M_y-m_y) \qquad \mbox{for $y\in Y_{\mathrm{br}}$ and $x_y\in X$ above $y$}
\end{equation}
(see Remark \ref{rem:twocases}).
We obtain the following natural identifications as $kG$-modules:
\begin{eqnarray*}
\HH^0(X,\Omega_{X/k}) &=& \bigcap_{x\in X} (\Omega_{X/k})_x\\
&=& \{ f\,dt\; : \; f \in k(X), \mathrm{ord}_x(f) \ge -e_{x/\pi(x)}k_{\pi(x)}-d_{x/\pi(x)} \mbox{ for all } x\in X\}.
\end{eqnarray*}
By the Riemann-Hurwitz formula, we have
\begin{equation}
\label{eq:genus}
g(X) = 1+4(g(Y)-1) + \frac{1}{2} \sum_{y\in Y_{\mathrm{br}}} d_{x_y/y}= -3 + \frac{1}{2} \sum_{y\in Y_{\mathrm{br}}} d_{x_y/y}
\end{equation}
\end{remark}

\begin{definition}
\label{def:basisoverP1}
Fix $y\in Y_{\mathrm{br}}$. Let $u_y,v_y$ be as in Remark \ref{rem:needoverP1}, let $\alpha_y,\beta_y,\lambda_y, \delta_y$ be as in Remark \ref{rem:twocases}, let $w_y(j)$, for $j\ge 0$, be as in Lemma \ref{lem:twocases}, and let $\pi_y,k_y$ be as in Remark \ref{rem:holodiffoverP1}.
\begin{itemize}
\item[(a)]
Define $\mu_{y,1}:=\left\lfloor \frac{m_y+3}{4} \right\rfloor$, $\mu_{y,2}:=\left\lfloor \frac{2m_y+3}{4} \right\rfloor$, $\mu_{y,3}:=\left\lfloor \frac{3m_y+3}{4} \right\rfloor$, and $\nu_y:=\frac{M_y-m_y}{2}$. Moreover define
$$s(y):=\left\{ \begin{array}{cl} 0&\mbox{ if $y=\infty$},\\
1&\mbox{ if $y\ne \infty$}.
\end{array}\right.$$
\item[(b)] Define 
$$\begin{array}{rclccl}
f_{y,1,i_1(y)} &:=& \pi_y^{-i_1(y)}, & s(y)&\le i_1(y) \le& \mu_{y,3}  + \nu_y + k_y ,\\[1ex]
f_{y,2,i_2(y)} &:=& \pi_y^{-i_2(y)}\, u_y, & s(y)&\le i_2(y) \le& \mu_{y,1} + \nu_y + k_y ,\\[1ex]
f_{y,3,i_3(y)} &:=& \pi_y^{-i_3(y)}\, v_y, & s(y)&\le i_3(y) \le& \mu_{y,1} + k_y  ,\\[1ex]
f_{y,3,i_3(y)} &:=& \pi_y^{-i_3(y)}\, w_{y}\left(i_3(y) - \mu_{y,1} - k_y - 1\right), \quad& \mu_{y,1} + k_y +1  &\le i_3(y) \le& \mu_{y,2}  + k_y ,
\end{array}$$
and define 
\begin{eqnarray*}
\mathcal{B}_{y,1} &:=& \{f_{y,1,i_1(y)}\; : \; s(y)\le i_1(y) \le \mu_{y,3}  + \nu_y + k_y \} ,\\
\mathcal{B}_{y,2} &:=& \{f_{y,2,i_2(y)}\; : \; s(y)\le i_2(y) \le \mu_{y,1} + \nu_y + k_y \},\\
\mathcal{B}_{y,3} &:=& \{f_{y,3,i_3(y)}\; : \; s(y)\le i_3(y) \le \mu_{y,2}  + k_y \}.
\end{eqnarray*}
\item[(c)]
For each $y\in Y_{\mathrm{br}}$, define $\mathcal{B}_y=\mathcal{B}_{y,1} \cup \mathcal{B}_{y,2} \cup \mathcal{B}_{y,3}$. Define
$$\mathcal{B} := \bigcup_{y\in Y_{\mathrm{br}}} \mathcal{B}_y.$$
\end{itemize}
\end{definition}

\begin{lemma}
\label{lem:basisoverP1}
Under Assumptions $\ref{ass:general}$ and $\ref{ass:overP1}$, a $k$-basis for $\HH^0(X,\Omega_{X/k})$ is given by $\{f\,dt\; :\; f\in\mathcal{B}\}$ where $\mathcal{B}$ is as in Definition $\ref{def:basisoverP1}$.
\end{lemma}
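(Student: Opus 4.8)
The plan is to establish three things: every $f\,dt$ with $f\in\mathcal{B}$ is a holomorphic differential, the functions in $\mathcal{B}$ are $k$-linearly independent, and $\#\mathcal{B}=\dim_k\HH^0(X,\Omega_{X/k})=g(X)$; together these force $\{f\,dt:f\in\mathcal{B}\}$ to be a basis. Throughout I use the description of the global sections from Remark \ref{rem:holodiffoverP1}, namely that $f\,dt\in\HH^0(X,\Omega_{X/k})$ if and only if $\mathrm{ord}_x(f)\ge -e_{x/\pi(x)}k_{\pi(x)}-d_{x/\pi(x)}$ for every $x\in X$. A preliminary observation is that each $f\in\mathcal{B}$ lies in $k(X)^{(3)}=K\oplus K\widetilde{u}\oplus K\widetilde{v}$, with $\widetilde{u},\widetilde{v}$ as in (\ref{eq:yikes}): indeed $\widetilde{u},\widetilde{v},\widetilde{u+v}$ all lie in $\ker((\sigma-1)(\tau-1))$, and by Remarks \ref{rem:needoverP1} and \ref{rem:twocases} the functions $u_y,v_y,w_y(j)$ are $K$-linear combinations of $1,\widetilde{u},\widetilde{v}$. (Consistently, Propositions \ref{prop:totallyramified} and \ref{prop:dimension} give $D_3=0$, whence $\HH^0(X,\Omega_{X/k})=\HH^0(X,\Omega_{X/k})^{(3)}$.)

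For holomorphy, fix $y\in Y_{\mathrm{br}}$ and $f\in\mathcal{B}_y$. The poles of $f$ occur only above points of $Y_{\mathrm{br}}$, together with possible poles at $\infty$ or $0$ coming from the powers of $t$. At the attached point $x_y$ I would insert $\mathrm{ord}_{x_y}(u_y)=-2m_y$ and $\mathrm{ord}_{x_y}(v_y)=-2M_y$ (Remark \ref{rem:twocases}) and the two regimes for $\mathrm{ord}_{x_y}(w_y(j))$ from (\ref{eq:useful1}) and (\ref{eq:useful2}) of Lemma \ref{lem:twocases}; substituting the index ranges of Definition \ref{def:basisoverP1} and simplifying with $4\mu_{y,1}\le m_y+3$ and $4\mu_{y,2}\le 2m_y+3$ yields $\mathrm{ord}_{x_y}(f)\ge -4k_y-d_{x_y/y}$, where $d_{x_y/y}=3(m_y+1)+2(M_y-m_y)$ by Proposition \ref{prop:totallyramified}. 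The value $k_\infty=-2$ is exactly what absorbs the double pole of $dt$ at $\infty$, and it also makes the relevant range empty in the small cases where $dt$ itself fails to be holomorphic. At a point $x_{y'}$ above another branch point $y'$, the factor $\pi_y^{-i}$ is a unit (or has a zero) while $u_y,v_y,w_y(j)$ have a pole of order at most $2M_{y'}<d_{x_{y'}/y'}$; at unramified points $f$ has no pole. This gives holomorphy everywhere.

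For the cardinality, summing the sizes of $\mathcal{B}_{y,1},\mathcal{B}_{y,2},\mathcal{B}_{y,3}$ over $y\in Y_{\mathrm{br}}$ and using the identity $\mu_{y,1}+\mu_{y,2}+\mu_{y,3}=\tfrac{3m_y+3}{2}$ for odd $m_y$, together with $\nu_y=\tfrac{M_y-m_y}{2}$ and the corrections from $s(y)$ and $k_y$, the contribution of a finite $y$ is $\tfrac{1}{2}d_{x_y/y}$ and that of $\infty$ is $\tfrac{1}{2}d_{x_\infty/\infty}-3$, so $\#\mathcal{B}=-3+\tfrac{1}{2}\sum_{y\in Y_{\mathrm{br}}}d_{x_y/y}=g(X)$ by the Riemann-Hurwitz formula (\ref{eq:genus}).

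Linear independence is the main obstacle. I would expand $f=a_f\cdot 1+b_f\,\widetilde{u}+c_f\,\widetilde{v}$ with $a_f,b_f,c_f\in K$, recovering the components via $(\tau-1)f=b_f$ and $(\sigma-1)f=c_f$. The crucial point is that, because $\widetilde{\alpha}_y$ and $\beta_y(j)$ are Laurent polynomials in $\pi_y$ (Remark \ref{rem:betterwu}), the coefficients $b_f$ and $c_f$ of every $f\in\mathcal{B}_y$ have poles only at $y$; the only globally spread correction $s_p+s_q-s_r=\widetilde{u+v}-\widetilde{u}-\widetilde{v}$ enters solely through $a_f$. Given a relation $\sum_f\lambda_f f=0$, applying $(\tau-1)$ and $(\sigma-1)$ produces $\sum_f\lambda_f b_f=0$ and $\sum_f\lambda_f c_f=0$, which I would localize at each $y$: by the preceding observation, only functions attached to $y$ contribute to the principal part at $y$. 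A short case analysis over Remark \ref{rem:twocases}(i),(ii), tracking which of $\widetilde{u},\widetilde{v}$ carries each contribution and using (\ref{eq:duh1new}), shows that in these localized $b$- and $c$-relations the leading $\pi_y$-pole orders of the contributing functions (those from $\mathcal{B}_{y,2}$ and from the $v_y$- and $w_y$-parts of $\mathcal{B}_{y,3}$) are pairwise distinct, either because they occupy disjoint order ranges or because they split across the two components. The resulting triangular systems force $\lambda_f=0$ for all $f\in\mathcal{B}_{y,2}\cup\mathcal{B}_{y,3}$. What remains is $\sum_f\lambda_f a_f=0$ over $f\in\bigcup_y\mathcal{B}_{y,1}$ only, where $a_f=\pi_y^{-i}$ has poles of distinct orders at each $y$, so the remaining $\lambda_f$ vanish as well. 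Combining holomorphy, independence, and $\#\mathcal{B}=g(X)=\dim_k\HH^0(X,\Omega_{X/k})$ completes the proof; the most delicate step is disentangling $\mathcal{B}_{y,2}$ from $\mathcal{B}_{y,3}$ in the four ramification cases and verifying the claimed distinctness of pole orders.
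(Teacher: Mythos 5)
Your overall strategy coincides with the paper's: verify holomorphy of each $f\,dt$, verify $k$-linear independence of $\mathcal{B}$, and show $\#\mathcal{B}=g(X)$ via Riemann--Hurwitz, so that the three facts together give a basis. Your treatment of the attached point $x_y$ (via Lemma \ref{lem:twocases} and the index ranges) and your cardinality count are the same as the paper's, and your component-wise localization argument for linear independence is a correct, more detailed version of what the paper compresses into a single sentence about the $k(t)$-basis $\{1,\widetilde{u},\widetilde{v},\widetilde{u}\widetilde{v}\}$ and the relation $\widetilde{u+v}=\widetilde{u}+\widetilde{v}+(s_p+s_q-s_r)$.

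There is, however, a genuine gap in your holomorphy check away from the attached branch point, concentrated at the points above $\infty$ (note that under Assumption \ref{ass:overP1} the point $\infty$ is always a branch point). First, your claim that $w_y(j)$ has a pole of order at most $2M_{y'}$ at $x_{y'}$ is false for $y'=\infty$: when $j>\nu_y$ the Laurent polynomial $\beta_y(j)=\pi_y^{-\nu_y}\sum_{i=0}^{j}b_{y,i}\pi_y^{i}$ contains positive powers of $\pi_y=t-\mu$, which have poles at $\infty$, so $\mathrm{ord}_{x_\infty}(w_y(j))$ can drop to $-4(j-\nu_y)-2M_\infty<-2M_\infty$ (this can already happen when $m_y=M_y\ge 7$, where $\nu_y=0$ and $j=1$ is allowed). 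Second, even where the bound $-2M_\infty$ is valid, it does not meet the requirement at $\infty$: since $k_\infty=-2$ and $e_{x_\infty/\infty}=4$, one needs $\mathrm{ord}_{x_\infty}(f)\ge 8-d_{x_\infty/\infty}=5-m_\infty-2M_\infty$, and the estimate ``unit times pole of order $\le 2M_\infty$'' gives only $-2M_\infty$, which suffices exactly when $m_\infty\ge 5$ and fails for $m_\infty\in\{1,3\}$. Both defects are repaired by the quantitative fact the paper records, namely $\mathrm{ord}_{x_\infty}(\pi_y^{a})=-4a$ for finite $y$: since $i\ge s(y)=1$, the factor $\pi_y^{-i}$ contributes a zero of order at least $4$ at $x_\infty$, and since $i_3=j+\mu_{y,1}+k_y+1$, the product $\pi_y^{-i_3}\beta_y(j)$ involves only negative powers of $\pi_y$, whence $\mathrm{ord}_{x_\infty}(\pi_y^{-i_3}w_y(j))\ge 4-2M_\infty\ge 5-m_\infty-2M_\infty$. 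A smaller instance of the same kind of gap: ``at unramified points $f$ has no pole'' is not automatic for the points above $t=0$; the potential poles there come not from the powers of $t$ (which are harmless, as the finite branch points avoid $0$) but from $\widetilde{u},\widetilde{v},\widetilde{u+v}$ themselves, and they are excluded precisely by inequality (\ref{eq:rationaldegree}), i.e.\ by $0\notin Y_{\mathrm{br}}$, which your argument needs to invoke.
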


\begin{proof}
Fix $y\in Y_{\mathrm{br}}$ and $x_y\in X$ above $y$. By (\ref{eq:differentP1}), we have the following equalities of positive integers:
\begin{equation}
\label{eq:duh0}
\begin{array}{cclccl}
\left\lfloor \frac{d_{x_y/y}}{4} \right\rfloor &=& \mu_{y,3} + \nu_y,\quad&
\left\lfloor \frac{d_{x_y/y}-2m_y}{4} \right\rfloor &=& \mu_{y,1} + \nu_y,\\[1ex]
\left\lfloor \frac{d_{x_y/y}-2M_y}{4} \right\rfloor &=& \mu_{y,1},&
\left\lfloor \frac{d_{x_y/y}-m_y-2(M_y-m_y)}{4} \right\rfloor &=& \mu_{y,2}.
\end{array}
\end{equation}
By (\ref{eq:duh1new}) we have
\begin{equation}
\label{eq:duh1}
\mu_{y,2}-\mu_{y,1} 
= \left\{\begin{array}{ll} \textstyle{\left\lfloor \frac{m_y}{4}\right\rfloor}  & \mbox{if } m_y\equiv 1 \mod 4, \mbox{ and}\\
\textstyle{\left\lfloor \frac{m_y}{4}\right\rfloor +1} & \mbox{if } m_y\equiv 3 \mod 4.
\end{array}\right.
\end{equation}

Suppose $\mathrm{b}_y\in\mathcal{B}_y$. Using (\ref{eq:duh0}), (\ref{eq:duh1}) and Lemma \ref{lem:twocases}, we see that $\mathrm{ord}_{x_y}(\mathrm{b}_y)\ge -e_{x_y/y}k_y-d_{x_y/y}$.
Suppose now that $y'\in Y_{\mathrm{br}}-\{y\}$. Since $u_y,v_y\in\{\widetilde{u},\widetilde{v},\widetilde{u+v}\}$, we obtain that both $\mathrm{ord}_{x_{y'}}(u_y)$ and $\mathrm{ord}_{x_{y'}}(v_y)$ are greater than or equal to $-2M_{y'}$. Moreover, for $a\in \mathbb{Z}$, it follows from (\ref{eq:uniformizer}) and Assumption \ref{ass:overP1} that $\mathrm{ord}_{x_{\infty}}(\pi_y^a)=-4a$ and $\mathrm{ord}_{x_{y'}}(\pi_y^a) = 0 = \mathrm{ord}_{x_{y'}}(\pi_\infty^a)$ when $y\ne \infty$ and $y'\not\in\{y,\infty\}$.
Using this, we obtain that if $y'\in Y_{\mathrm{br}}-\{y\}$ then $\mathrm{ord}_{x_{y'}}(\mathrm{b}_y)\ge -e_{x_{y'}/y'}k_{y'}-d_{x_{y'}/y'}$. 

Finally, suppose that $x\in X$ with $\pi(x)\not\in Y_{\mathrm{br}}$. Since $u_y,v_y \in \{\widetilde{u},\widetilde{v},\widetilde{u+v}\}$, we obtain from (\ref{eq:rationaldegree}) that both $\mathrm{ord}_x(u_y)$ and $\mathrm{ord}_x(v_y)$ are greater than or equal to $0$. Using this, we see that $\mathrm{ord}_x(\mathrm{b}_y)\ge 0$.

Therefore, $f\,dt\in \HH^0(X,\Omega_{X/k})$ for all $f\in\mathcal{B}$. Since $k(X)=k(Y)(u,v)=k(t)(\widetilde{u},\widetilde{v})$ has a $k(t)$-basis given by $\{1,\widetilde{u},\widetilde{v},\widetilde{u} \widetilde{v}\}$ and since 
$$\widetilde{u+v}=\widetilde{u}+\widetilde{v} + \left(s_p+s_q-s_r\right) $$
by (\ref{eq:yikes}), we obtain that the elements in $\mathcal{B}$ are $k$-linearly independent.  

Using that $m_y$ is a positive odd integer, in addition to (\ref{eq:differentP1}) and (\ref{eq:duh0}), we see that
$$\mu_{y,3}+\nu_y + \mu_{y,1}+\nu_y + \mu_{y,2} = \textstyle \frac{1}{2}\,d_{x_y/y}.$$
Therefore, we obtain
$$\#\mathcal{B}_y = \left\{ \begin{array}{cl}
\frac{1}{2}\,d_{x_y/y} - 3&\mbox{if $y=\infty$},\\
\frac{1}{2}\,d_{x_y/y}&\mbox{if $y\ne\infty$},
\end{array}\right. $$
which implies by (\ref{eq:genus}) that $\#\mathcal{B}=g(X)$. 
This completes the proof of Lemma \ref{lem:basisoverP1}.
\end{proof}

The next theorem gives the precise decomposition of $\HH^0(X,\Omega_{X/k})$ into a direct sum of indecomposable $kG$-modules, where we use Notation \ref{not:indecomposables}  for the indecomposable $kG$-modules. Moreover, we define $N_{0,\lambda}$ to be the zero module for all $\lambda\in k\cup\{\infty\}$.

\begin{theorem}
\label{thm:P1main}
Under Assumptions $\ref{ass:totallyramified}$ and $\ref{ass:overP1}$, we have an isomorphism of $kG$-modules
$$\HH^0(X,\Omega_{X/k})\cong \bigoplus_{y\in Y_{\mathrm{br}}} \left(N_{2\ell_y,\lambda_y}^{\oplus a_{y,1}}\;\oplus\; N_{2(\ell_y-1),\lambda_y}^{\oplus a_{y,2}}\right) \;\oplus\; M_{3,1}^{\oplus b} \;\oplus\; k^{\oplus c}$$
where $b=\left(\sum_{y\in Y_{\mathrm{br}}} \left\lfloor \frac{m_y+3}{4}\right\rfloor\right)-1$, $c= \sum_{y\in Y_{\mathrm{br}}} \left( \left\lfloor \frac{3m_y+3}{4}\right\rfloor - \left\lfloor \frac{2m_y+3}{4}\right\rfloor \right)$, and $\ell_y,a_{y,1},a_{y,2}$ are given as follows for each $y\in Y_{\mathrm{br}}$:
\begin{itemize}
\item[(i)] If $\delta_y=0$, then $\ell_y=1$, $a_{y,1}=\left\lfloor \frac{2m_y+3}{4}\right\rfloor - \left\lfloor \frac{m_y+3}{4}\right\rfloor$, and $a_{y,2}=0$. 
\item[(ii)] If $\delta_y\ge 1$, then $\ell_y\ge 1$ and $1\le a_{y,1}\le \delta_y$ are uniquely determined by the equation
$$\textstyle\left\lfloor \frac{2m_y+3}{4}\right\rfloor - \left\lfloor \frac{m_y+3}{4}\right\rfloor = (\ell_y-1) \delta_y +a_{y,1},$$
and $a_{y,2}=\delta_y-a_{y,1}$. 
\item[(iii)] If $\delta_y=-1$, then $\ell_y\ge 1$ and $1\le a_{y,1}\le \frac{M_y-m_y}{2}$ are uniquely determined by the equation
$$\textstyle \left\lfloor \frac{2m_y+3}{4}\right\rfloor - \left\lfloor \frac{m_y+3}{4}\right\rfloor + \frac{M_y-m_y}{2} = (\ell_y-1) \frac{M_y-m_y}{2} +a_{y,1},$$
and $a_{y,2}=\frac{M_y-m_y}{2}-a_{y,1}$. 
\end{itemize}
\end{theorem}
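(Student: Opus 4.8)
The plan is to determine the decomposition explicitly from the $k$-basis $\mathcal{B}$ of Lemma \ref{lem:basisoverP1}, using Theorem \ref{thm:mostgeneral} and Remark \ref{rem:candomore} to control the odd-dimensional and trivial summands. The key simplification is that $g(Y)=0$: by Theorem \ref{thm:mostgeneral}(iv) this forces $\varepsilon_5=0$, and since among the indecomposable $kG$-modules of Definition \ref{def:decompose} only the free module $kG$ has $(\sigma-1)(\tau-1)$ acting nontrivially, this is equivalent to $M^{(4)}/M^{(3)}=0$, i.e. $(\sigma-1)(\tau-1)$ annihilates $M=\HH^0(X,\Omega_{X/k})$. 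Thus $M$ is a module over the radical-square-zero algebra $kG/\mathrm{soc}(kG)$, and decomposing it amounts to classifying the matrix pencil given by the commuting pair $(\sigma-1,\tau-1)$, whose indecomposable constituents are exactly $k$, the $M_{2n+1,i}$, and the $N_{2\ell,\lambda}$. From Theorem \ref{thm:mostgeneral} I can already read off $\varepsilon_2=b$, $\varepsilon_3+\varepsilon_4=c$, and the multiplicities $\varepsilon_{1,\lambda}$.

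First I would localize at the branch points. Using the action of $G$ on $u_y,v_y,w_y(j)$ recorded in Remarks \ref{rem:twocases}, \ref{rem:betterwu}, and \ref{rem:awkward1follows}, I would verify that the operators $\sigma_y-1$ and $\tau_y-1$ (for the generators $\sigma_y,\tau_y\in G$ adapted to $y$) send every element of $\mathcal{B}_y$ into the span of $\mathcal{B}_{y,1}$; the pole bounds in Definition \ref{def:basisoverP1} are arranged precisely so that the images remain holomorphic and stay within the allowed index ranges, a point that needs a small extra check at $y=\infty$, where $s(\infty)=0$ and $k_\infty=-2$. Because $\{1,\sigma_y-1,\tau_y-1,(\sigma_y-1)(\tau_y-1)\}$ is a $k$-basis of $kG$ and $\sigma_y,\tau_y$ generate $G$, this shows each $V_y:=\mathrm{span}_k\{f\,dt:f\in\mathcal{B}_y\}$ is a $kG$-submodule, so Lemma \ref{lem:basisoverP1} yields $M=\bigoplus_{y\in Y_{\mathrm{br}}}V_y$ as $kG$-modules. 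Each $V_y$ is a matrix pencil with arrow space $\mathrm{span}(\mathcal{B}_{y,2}\cup\mathcal{B}_{y,3})$ mapping into the socle $\mathrm{span}(\mathcal{B}_{y,1})$ via the explicit rules $(\sigma_y-1)(\pi_y^{-i}u_y)=0$, $(\tau_y-1)(\pi_y^{-i}u_y)=\pi_y^{-i}$, $(\sigma_y-1)(\pi_y^{-i}v_y)=\pi_y^{-i}$, $(\tau_y-1)(\pi_y^{-i}v_y)=0$, and $(\sigma_y-1)(\pi_y^{-i}w_y(j))=\pi_y^{-i}$, $(\tau_y-1)(\pi_y^{-i}w_y(j))=\pi_y^{-i}\beta_y(j)$.

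Next I would decompose each $V_y$ by bringing its pencil into Kronecker normal form, separating the cases $\delta_y=0$, $\delta_y\ge1$, and $\delta_y=-1$. When $\delta_y=0$ the coefficient $\widetilde\beta_y=\lambda_y$ is constant, so each pair $\{\pi_y^{-i}\widetilde w_y,\pi_y^{-i}\}$ spans a copy of $N_{2,\lambda_y}$, giving $\ell_y=1$ and $a_{y,2}=0$ as in (i). When $\delta_y\ge1$ the genuine Laurent polynomial $\beta_y(j)$, and when $\delta_y=-1$ the gap $M_y>m_y$, produce a staircase that assembles the elements of $\mathcal{B}_{y,2}\cup\mathcal{B}_{y,3}$ into $\delta_y$, respectively $\frac{M_y-m_y}{2}$, regular Jordan blocks at the single parameter $\lambda_y$, with the remaining basis vectors forming the singular blocks $M_{3,1}$ and leftover copies of $k$. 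Since the total half-dimension of the regular part at $y$ equals $\left\lfloor\frac{2m_y+3}{4}\right\rfloor-\left\lfloor\frac{m_y+3}{4}\right\rfloor$ (plus $\frac{M_y-m_y}{2}$ when $\delta_y=-1$), dividing it by the number of blocks produces exactly the defining equations for $\ell_y$ and $a_{y,1}$ in (ii) and (iii). Summing the regular half-dimensions over $Y_{\mathrm{br}}$ confirms the inequality (\ref{eq:sodumb!}), so Remark \ref{rem:candomore} applies and forces $n_i=1$ for all $i$, $\varepsilon_3=0$, and $\varepsilon_4=c$; together with $\varepsilon_2=b$ this fixes the odd and trivial parts and assembles the stated decomposition.

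The main obstacle will be the normal-form computation when $\delta_y\ge1$ or $\delta_y=-1$: I must show not only that there are the correct number of regular blocks of the correct total dimension -- which already follows from Theorem \ref{thm:mostgeneral} and a dimension count -- but that their sizes differ by at most one, i.e. that the nilpotent part of the pencil at $\lambda_y$ has an as-equal-as-possible Jordan type. This is precisely the data the filtration dimensions $r_i$ of Proposition \ref{prop:dims} cannot detect, since those see only the number of regular blocks at each $\lambda$ and not their sizes, so an abstract appeal to the $r_i$ is not enough. I expect the cleanest route to be constructive: exhibit an explicit cyclic generator for each block -- the largest-pole $w_y(j)$ available in each residue class of the staircase -- and verify that repeatedly applying the pencil exhausts $\mathcal{B}_{y,2}\cup\mathcal{B}_{y,3}$ in strings whose lengths differ by at most one. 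A final comparison of dimensions against $\#\mathcal{B}=g(X)$ from Lemma \ref{lem:basisoverP1} then certifies that the constructed summands account for all of $\HH^0(X,\Omega_{X/k})$.
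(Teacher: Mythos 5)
Your proposal follows the same architecture as the paper's proof: start from the basis $\mathcal{B}$ of Lemma \ref{lem:basisoverP1}, check that the span of each $\mathcal{B}_y$ is $G$-stable so that $\HH^0(X,\Omega_{X/k})=\bigoplus_{y\in Y_{\mathrm{br}}}V_y$, split off the copies of $M_{3,1}$ and of $k$ coming from the triples $\{f_{y,1,i},f_{y,2,i},f_{y,3,i}\}$ with small index and the leftover $f_{y,1,i}$ with large index, and reduce to a two-operator (pencil) problem on the remaining $2n_y$ basis vectors, where $n_y=\mu_{y,2}-\mu_{y,1}+\nu_y$. (The paper gets the multiplicities $b$ and $c$ by directly counting these explicit summands, so your detour through Theorem \ref{thm:mostgeneral} and Remark \ref{rem:candomore} is harmless but unnecessary.) Your observation that the filtration dimensions $r_i$ of Proposition \ref{prop:dims} cannot detect the block sizes is exactly right, and it correctly locates the crux of the theorem.

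The genuine gap is that this crux --- that the regular part of $V_y$ decomposes with the \emph{balanced} block sizes $\ell_y$ and $\ell_y-1$ of parts (ii) and (iii) --- is only announced, and the constructive route you sketch would fail as described. On the ordered basis $\mathcal{B}_{\lambda_y}$, the operators $\sigma_y-1$ and $\tau_y-1$ act through matrices $C_{\lambda_y}$ and $D_{\lambda_y}$, and the isomorphism type is governed by the conjugacy class of the nilpotent matrix $N_y:=D_{\lambda_y}-\lambda_y\mathbf{1}_{n_y}$ when $\delta_y\ge 1$ (where $C_{\lambda_y}=\mathbf{1}_{n_y}$), respectively of $\widetilde{C}_{\lambda_y}:=D_{\lambda_y}^{-1}C_{\lambda_y}$ when $\delta_y=-1$. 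These matrices are not bare shift powers: all the coefficients $b_{y,i}$ with $i>\delta_y$ (respectively all entries of $D_{\lambda_y}^{-1}$) enter, so their Jordan chains are honest linear combinations of the $f_{y,j,i}$ and are \emph{not} obtained by ``repeatedly applying the pencil'' to individual basis vectors $\pi_y^{-i}w_y(j)$; exhibiting explicit cyclic generators is precisely the hard computation your plan postpones. The paper's proof closes this gap without any explicit generators, by a rank computation: $N_y$ (resp.\ $\widetilde{C}_{\lambda_y}$) is upper triangular, vanishes below its $\delta_y$-th (resp.\ $\nu_y$-th) superdiagonal, and has non-zero entries all along that superdiagonal; since this shape is preserved under taking powers, one gets
$$\mathrm{rank}(N_y^{\,i})=\max(n_y-i\,\delta_y,\,0), \qquad \mathrm{rank}\bigl(\widetilde{C}_{\lambda_y}^{\,i}\bigr)=\max(n_y-i\,\nu_y,\,0),$$
and the ranks of all powers of a nilpotent matrix determine its Jordan type uniquely --- here forcing exactly $a_{y,1}$ blocks of size $\ell_y$ and $a_{y,2}$ blocks of size $\ell_y-1$ (and, with $i=1$, also the block counts $\delta_y$, resp.\ $\tfrac{M_y-m_y}{2}$, so Theorem \ref{thm:mostgeneral} is not needed for this either). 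Some such argument must be supplied before your proposal becomes a proof.
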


\begin{proof} 
Fix $y\in Y_{\mathrm{br}}$, and use Remark \ref{rem:needoverP1} and Definition \ref{def:basisoverP1}. Define 
$$\begin{array}{lll}
\sigma_y:=\sigma, & \tau_y:=\tau &\mbox{ in the situation of Remark \ref{rem:twocases}(i) or (ii)(a),}\\
\sigma_y:=\tau, & \tau_y:=\sigma &\mbox{ in the situation of Remark \ref{rem:twocases}(ii)(b), and}\\
\sigma_y:=\sigma\circ\tau, & \tau_y:=\tau &\mbox{ in the situation of Remark \ref{rem:twocases}(ii)(c).}
\end{array}$$
Recall from Lemma \ref{lem:twocases} that for $0\le j \le \mu_{y,2}-\mu_{y,1}-1$, we have 
$$\beta_y(j) = \pi_y^{-\nu_y} \left(\sum_{i=0}^{j} b_{y,i}\pi_y^i\right)\quad\mbox{and}\quad
w_y(j) = v_y +\widetilde{\alpha}_y + \beta_y(j) \,u_y$$
where $\pi_y$ is as in (\ref{eq:uniformizer}). By (\ref{eq:Gaction}) and Remark \ref{rem:twocases}, we have the following $G$-actions on the elements of $\mathcal{B}_y = \mathcal{B}_{y,1} \cup \mathcal{B}_{y,2} \cup \mathcal{B}_{y,3}$: 
$$\begin{array}{c||c|c||l}
 & \sigma_y -1 & \tau_y -1\\ \hline\hline
f_{y,1,i_1(y)} & 0 & 0 & \mbox{for } s(y) \le i_1(y) \le \mu_{y,3} + \nu_y + k_y\\
f_{y,2,i_2(y)} & 0 & f_{y,1,i_2(y)} & \mbox{for } s(y) \le i_2(y) \le \mu_{y,1} + \nu_y + k_y \\
f_{y,3,i_3(y)} & f_{y,1,i_3(y)} & 0 & \mbox{for } s(y) \le i_3(y) \le \mu_{y,1}  + k_y\\
f_{y,3,i_3(y)} & f_{y,1,i_3(y)} & \displaystyle{\sum_{i=0}^{i_3(y)-\mu_{y,1} - k_y - 1} b_{y,i} \,f_{y,1,i_3(y)+\nu_y-i}} 
& \mbox{for } \mu_{y,1}  + k_y + 1 \le i_3(y) \le \mu_{y,2} + k_y.
\end{array}$$
Define the following, pairwise disjoint subsets of $\mathcal{B}_y$:
$$\begin{array}{rcll}
\mathcal{B}_{y,3,1,s_1(y)} &:=& \{f_{y,1,s_1(y)}, f_{y,2,s_1(y)},f_{y,3,s_1(y)}\} & \mbox{for } s(y)\le s_1(y)\le \mu_{y,1}+k_y,\\
\mathcal{B}_{y,1,s_2(y)} &:=& \{f_{y,1,s_2(y)}\} & \mbox{for } \mu_{y,2}  + \nu_y + k_y + 1 \le s_2(y) \le \mu_{y,3}  + \nu_y + k_y.
\end{array}$$
Considering the actions of $\sigma_y-1$ and $\tau_y-1$ on these sets, we see that the $k$-span of each $\mathcal{B}_{y,3,1,s_1(y)}$ gives a $kG$-module isomorphic to $M_{3,1}$, and the $k$-span of each $\mathcal{B}_{y,1,s_2(y)}$ gives the trivial $kG$-module $k$. In particular, the number of copies of $M_{3,1}$ we obtain this way equals
$$\mu_{y,1}+k_y-s(y)+1=\left\{\begin{array}{ll} \mu_{y,1}-1 & \mbox{if } y=\infty,\\
\mu_{y,1} & \mbox{if } y\neq \infty,
\end{array}\right.$$
whereas the number of copies of the trivial $kG$-module $k$ equals $\mu_{y,3}-\mu_{y,2}$.

We next analyze the actions of $\sigma_y-1$ and $\tau_y-1$ on the remaining elements of $\mathcal{B}_y$, which we collect in the following (ordered) set: 
\begin{eqnarray}
\label{eq:Blambday}
\mathcal{B}_{\lambda_y} &:=& \{f_{y,1,s_3(y)} \; : \; \mu_{y,1}+k_y+1 \le s_3(y) \le \mu_{y,2}  + \nu_y + k_y \}\\
\nonumber
&& \quad \cup \; \{f_{y,2,s_4(y)} \; : \; \mu_{y,1}+k_y+1 \le s_4(y) \le \mu_{y,1} + \nu_y + k_y\}\\
\nonumber
&& \quad \cup \; \{f_{y,3,s_5(y)} \; : \; \mu_{y,1}+k_y+1 \le s_5(y) \le \mu_{y,2} + k_y\}.
\end{eqnarray}
Define $n_y:= \mu_{y,2}-\mu_{y,1}+\nu_y$, so that $\mathcal{B}_{\lambda_y}$ has $2n_y$ elements. We have $n_y=0$ if and only if $\mu_{y,2}-\mu_{y,1}=0$ and $\nu_y=0$. Moreover, $\mu_{y,2}-\mu_{y,1}=0$ if and only if $m_y=1$.

Suppose first that $n_y=0$, i.e. $\mathcal{B}_{\lambda_y}=\emptyset$.  Then $\delta_y=0$ since $\lfloor \frac{m_y}{4}\rfloor=0$. Hence we are in part (i) of the statement of the theorem, which gives correctly $a_{y,1}=\mu_{y,2}-\mu_{y,1}=0$ and $a_{y,2}=0$ in this case.

Suppose from now on that $n_y\ge 1$. It follows that $\sigma_y-1$ and $\tau_y-1$ act on the ordered basis $\mathcal{B}_{\lambda_y}$ from (\ref{eq:Blambday}) as the following square matrices:
$$\sigma_y-1 \;\longleftrightarrow\; \left(\begin{array}{c|c} \mathbf{0}_{n_y}&C_{\lambda_y}\\ \hline \mathbf{0}_{n_y}&\mathbf{0}_{n_y}\end{array}\right)
\qquad\mbox{and}\qquad
\tau_y-1\;\longleftrightarrow\; \left(\begin{array}{c|c} \mathbf{0}_{n_y}&D_{\lambda_y}\\ \hline \mathbf{0}_{n_y}&\mathbf{0}_{n_y}\end{array}\right)  ,$$
where $\mathbf{0}_{n_y}$ is the $n_y\times n_y$ zero matrix and $C_{\lambda_y}$ and $D_{\lambda_y}$ are $n_y\times n_y$ matrices with $(i,j)$-entries (for $1\le i,j \le n_y$)
\begin{eqnarray*}
\left(C_{\lambda_y}\right)_{i,j}&=&\left\{\begin{array}{cl}
1 & \mbox{ if $j=\nu_y+i$ and  and $1\le i\le n_y-\nu_y$,}\\
0 & \mbox{ otherwise,}
\end{array}\right. \\
\left(D_{\lambda_y}\right)_{i,j}&=&\left\{\begin{array}{cl}
1 &\mbox{ if $i=j$ and $1\le i \le \nu_y$,}\\
b_{y,j-i} & \mbox{ if $\nu_y+1\le i\le j\le n_y$,}\\
0 & \mbox{ otherwise.}
\end{array}\right. 
\end{eqnarray*}

Suppose first that we are in parts (i) or (ii) of the statement of the theorem; in other words, we are in the situation of Remark \ref{rem:twocases}(i). Then $m_y=M_y$ and $\nu_y=0$, which means that $C_{\lambda_y}$ is the $n_y\times n_y$ identity matrix $\mathbf{1}_{n_y}$. By Notation \ref{not:twocases} and equation (\ref{eq:duh1new}), it follows that $\delta_y$ must lie in $\{0,1,\ldots, n_y\}$ and $\delta_y=n_y$ is only possible if $m_y\equiv 1\mod 4$. If $\delta_y=0$ or $\delta_y=n_y$ then $(D_{\lambda_y}-\lambda_y\mathbf{1}_{n_y})$ is the zero matrix, which implies, using Notation \ref{not:indecomposables}, that the $k$-span of $\mathcal{B}_{\lambda_y}$ gives a $kG$-module isomorphic to $N_{2,\lambda_y}^{\oplus n_y}$. Moreover, the statement of the theorem gives correctly $\ell_y=1$, $a_{y,1}=n_y$ and $a_{y,2}=0$ when $\delta_y\in\{0,n_y\}$. Suppose now that $0<\delta_y< n_y$. Then $(D_{\lambda_y}-\lambda_y\mathbf{1}_{n_y})$ is a non-zero nilpotent matrix that is upper triangular. Furthermore, its entries along the diagonal strips $(i,d+i)$ are all zero, for $0\le d <\delta_y$ and $1\le i \le n_y-d$, and its entries along the diagonal strip $(i,\delta_y+i)$ all equal $b_{y,\delta_y}\ne 0$, for $1\le i \le n_y-\delta_y$. Therefore, for $i\ge 0$, the rank of the matrix $(D_{\lambda_y}-\lambda_y\mathbf{1}_{n_y})^i$ equals $n_y-i\,\delta_y$ if $i\,\delta_y < n_y$ and $0$ otherwise. In particular, the definitions of $\ell_y$ and $a_{y,1}$ in part (ii) of the statement of the theorem imply that $(D_{\lambda_y}-\lambda_y\mathbf{1}_{n_y})^{\ell_y}= 0$ and $(D_{\lambda_y}-\lambda_y\mathbf{1}_{n_y})^{\ell_y-1}\ne 0$. We conclude that $D_{\lambda_y}$ is equivalent to a block diagonal matrix with $a_{y,1}$ Jordan blocks with eigenvalue $\lambda_y$ of size $\ell_y$ and $\delta_y-a_{y,1}$ Jordan blocks with eigenvalue $\lambda_y$ of size $\ell_y-1$. Using Notation \ref{not:indecomposables}, it follows that the $k$-span of $\mathcal{B}_{\lambda_y}$ gives a $kG$-module isomorphic to $N_{2\ell_y,\lambda_y}^{\oplus a_{y,1}} \oplus N_{2(\ell_y-1),\lambda_y}^{\oplus a_{y,2}}$.

Suppose next that we are in part (iii) of the theorem; in other words, we are in the situation of Remark \ref{rem:twocases}(ii). Then $m_y < M_y$ and $\nu_y\ge 1$. This means that $C_{\lambda_y}$ is a non-zero nilpotent upper triangular matrix whose only non-zero entries are equal to $1$ and occur precisely along the diagonal strip $(i,\nu_y+i)$, for $1\le i \le n_y-\nu_y$. On the other hand, $D_{\lambda_y}$ is an upper triangular matrix with non-zero diagonal entries. Define $\widetilde{C}_{\lambda_y} := D_{\lambda_y}^{-1}C_{\lambda_y}$. Then $\widetilde{C}_{\lambda_y}$ has $\nu_y$ zero columns, followed by the first $n_y-\nu_y$ columns of the upper triangular invertible matrix $D_{\lambda_y}^{-1}$. Therefore, for $i\ge 0$, the rank of the matrix $(\widetilde{C}_{\lambda_y})^i$ equals $n_y-i\,\nu_y$ if $i\,\nu_y < n_y$ and $0$ otherwise. In particular, the definitions of $\ell_y$ and $a_{y,1}$ in part (iii) of the statement of the theorem imply that $(\widetilde{C}_{\lambda_y})^{\ell_y}=0$ and $(\widetilde{C}_{\lambda_y})^{\ell_y-1}\ne 0$. We conclude that $\widetilde{C}_{\lambda_y}$ is equivalent to a block diagonal matrix with  $a_{y,1}$ Jordan blocks with eigenvalue $0$ of size $\ell_y$ and $\nu_y-a_{y,1}$ Jordan blocks with eigenvalue $0$ of size $\ell_y-1$. Using Notation \ref{not:indecomposables} together with Remark \ref{rem:01infty}, it follows that the $k$-span of $\mathcal{B}_{\lambda_y}$ gives a $kG$-module isomorphic to $N_{2\ell_y,\lambda_y}^{\oplus a_{y,1}} \oplus N_{2(\ell_y-1),\lambda_y}^{\oplus a_{y,2}}$. This completes the proof of Theorem \ref{thm:P1main}.
\end{proof}

\begin{remark}
\label{rem:smallEll}
Let $y\in Y_{\mathrm{br}}$. Then we have $\ell_y=1$ in Theorem \ref{thm:P1main} in the following cases:
\begin{itemize}
\item[(i)] If $\delta_y=0$ then always $\ell_y=1$.
\item[(ii)] If $\delta_y\ge 1$ then $\ell_y=1$ if and only if $m_y\equiv 1\mod 4$ and $\delta_y=\left\lfloor\frac{m_y}{4}\right\rfloor$. This follows from equation (\ref{eq:duh1new}) since $\delta_y \le \left\lfloor\frac{m_y}{4}\right\rfloor$.
\item[(iii)] If $\delta_y=-1$ then $\ell_y=1$ if and only if $m_y=1$. This follows since $\left\lfloor \frac{2m_y+3}{4}\right\rfloor - \left\lfloor \frac{m_y+3}{4}\right\rfloor\ge 0$.
\end{itemize}
In particular, these cases coincide, for $Y=\mathbb{P}^1_k$, with the cases covered by Theorem \ref{thm:onlyB1orB2B3small}.
\end{remark}

The next example shows that the list of isomorphism classes of indecomposable $kG$-modules that actually occur as direct summands of $\HH^0(X,\Omega_{X/k})$ in the situation of Theorem \ref{thm:P1main} for various $X$ is infinite and given as follows:
\begin{equation}
\label{eq:list2}
\left\{N_{2d,\lambda}\;:\;d \in\mathbb{Z}^+, \lambda\in k \cup\{\infty\}\right\} \cup \left\{M_{3,1},k\right\}.
\end{equation}
In particular, this list contains indecomposable $kG$-modules of arbitrarily large $k$-dimension.

\begin{example}
\label{ex:KatzGabber}
Let $Y=\mathbb{P}^1_k$ with function field $k(Y)=k(t)$. Given a positive integer $d$, we provide a family of examples such that there are at least two non-isomorphic indecomposable $kG$-modules whose $k$-dimensions are $2d$ and $2(d+1)$, respectively, and that occur as direct summands of $\HH^0(X,\Omega_{X/k})$. All these examples are Harbater-Katz-Gabber $G$-covers, in the sense of \cite[Section 4.B]{BleherChinburgPoonenSymonds2017}, i.e. there is exactly one branch point and this unique branch point is totally ramified. In the examples below, we have chosen the branch point to be at $\infty$. However, if we change the variable $t$ to $t':=t^{-1}$, we obtain examples of covers $X'$ of $\mathbb{P}^1_k$ such that the places in $k(X')$ above $\infty$ are unramified. 
In particular, $k(X')$ is then a global standard function field, in the sense of \cite[Definitions 2.1 and 3.1]{MarquesWard2018}. Since $\HH^0(X',\Omega_{X'/k})\cong \HH^0(X,\Omega_{X/k})$ as $kG$-modules, we obtain a contradiction to \cite[Theorem 6.4]{MarquesWard2018}, similarly to Examples \ref{ex:oops1} and \ref{ex:oops2}.

To cover all modules given in the list (\ref{eq:list2}), we need to consider two main cases. Fix $d\in\mathbb{Z}^+$ and $\lambda_0\in k-\{0,1\}$.

\begin{itemize}
\item[(1)] Let $p,q$ in Assumption \ref{ass:overP1} be given by
\begin{eqnarray*}
p&=&t^{8d+3},\\
q&=&\lambda_0^2 \,t^{8d+3} (1+t^{-4}),
\end{eqnarray*}
which implies that $r=p+q = (1+\lambda_0^2+\lambda_0^2t^{-4})\,t^{8d+3}$. Then
$$\mathrm{ord}_{\infty}(p) = -(8d+3) = \mathrm{ord}_{\infty}(q) = \mathrm{ord}_{\infty}(r),$$
and $\mathrm{ord}_y(p),\mathrm{ord}_y(q),\mathrm{ord}_y(r)\ge 0$ for all other $y\in\mathbb{P}^1_k$. This means that $Y_{\mathrm{br}}=\{\infty\}$, and $m_{\infty}=8d+3=M_{\infty}$. If $w:=v+  \lambda_0 (1+t^{-2})\,y$ then 
$\mathrm{ord}_{x_\infty}(w) = -(8d+3)$
when $x_\infty\in X$ lies above $\infty$. 
Using Remark \ref{rem:twocases} and equation (\ref{eq:uniformizer}), it follows that $\beta_{\infty} = \lambda_0 (1+\pi_{\infty}^2)$. Therefore, in Notation \ref{not:twocases}, we have $\lambda_{\infty}=\lambda_0$ and $\delta_{\infty}=2$. Since
$$\textstyle\left\lfloor \frac{2m_{\infty}+3}{4}\right\rfloor - \left\lfloor \frac{m_{\infty}+3}{4}\right\rfloor = 4d+2-(2d+1)=2d+1,$$
we obtain by part (ii) of Theorem \ref{thm:P1main} that $\ell_{\infty}=d+1$ and $a_{\infty,1}=1=a_{\infty,2}$. Therefore,
$$\HH^0(X,\Omega_{X/k})\cong  N_{2(d+1),\lambda_0} \;\oplus\; N_{2d,\lambda_0} \;\oplus\; M_{3,1}^{\oplus (2d)} \;\oplus\; k^{\oplus (2d+1)}.$$

\item[(2)] Define
$$f=t^{8d-5}, \quad  g=\lambda_0^2\,t^{8d-1}, \quad \mbox{and} \quad h=t^{8d-1}(\lambda_0^2+t^{-4})$$
which implies that the sum of any two of these polynomials equals the third one. Then
$$\mathrm{ord}_{\infty}(f) = -(8d-5) \quad\mbox{and}\quad  \mathrm{ord}_{\infty}(g) = -(8d-1) = \mathrm{ord}_{\infty}(h),$$
and $\mathrm{ord}_y(f),\mathrm{ord}_y(g),\mathrm{ord}_y(h)\ge 0$ for all other $y\in\mathbb{P}^1_k$.

\begin{itemize}
\item[(a)] Let $p,q$ in Assumption \ref{ass:overP1} be given by $p=f$ and $q=g$. Then $Y_{\mathrm{br}}=\{\infty\}$, and $m_{\infty}=8d-5$ and $M_{\infty}=8d-1$. If $w:=v+  \lambda_0 t^2u$ then 
$\mathrm{ord}_{x_\infty}(w) =-(8d+3)$
when $x_\infty\in X$ lies above $\infty$. Using Remark \ref{rem:twocases}, it follows that $\beta_{\infty} = \lambda_0 \,\pi_{\infty}^{-2}$. Therefore, in Notation \ref{not:twocases}, we have $\delta_{\infty}=-1$ and $\lambda_{\infty}=\infty$. Since
$$\textstyle \left\lfloor \frac{2m_{\infty}+3}{4}\right\rfloor - \left\lfloor \frac{m_{\infty}+3}{4}\right\rfloor +\frac{M_{\infty}-m_{\infty}}{2} = 4d-2-(2d-1) + 2 =2d+1,$$
we obtain by part (iii) of Theorem \ref{thm:P1main} that $\ell_{\infty}=d+1$ and $a_{y_\infty,1}=1=a_{y_\infty,2}$. Therefore,
$$\HH^0(X,\Omega_{X/k})\cong  N_{2(d+1),\infty} \;\oplus\; N_{2d,\infty} \;\oplus\; M_{3,1}^{\oplus (2d-2)} \;\oplus\; k^{\oplus (2d-1)}.$$

\item[(b)] If we define $p,q$ in Assumption \ref{ass:overP1} by $p=g$ and $q=f$, then everything stays the same as in part (a) except that we now have $w:=u+\lambda_0 t^2v$ and $\lambda_{\infty}=0$. Hence we obtain
$$\HH^0(X,\Omega_{X/k})\cong  N_{2(d+1),0} \;\oplus\; N_{2d,0} \;\oplus\; M_{3,1}^{\oplus (2d-2)} \;\oplus\; k^{\oplus (2d-1)}.$$

\item[(c)] If we define $p,q$ in Assumption \ref{ass:overP1} by $p=h$ and $q=g$, then everything stays the same as in part (a) except that we now have $w:=v + \lambda_0 t^2(u+v)$ and $\lambda_{\infty}=1$. Hence we obtain
$$\HH^0(X,\Omega_{X/k})\cong  N_{2(d+1),1} \;\oplus\; N_{2d,1} \;\oplus\; M_{3,1}^{\oplus (2d-2)} \;\oplus\; k^{\oplus (2d-1)}.$$
\end{itemize}
\end{itemize}
\end{example}

Theorem \ref{thm:awkward2} follows from Theorem \ref{thm:P1main} and Example \ref{ex:KatzGabber}.

\begin{remark}
\label{rem:sodumber!}
It follows from Remark \ref{rem:candomore} and Example \ref{ex:KatzGabber} that by adding $kG$ to the list (\ref{eq:list2}), we obtain a complete list of isomorphism classes of indecomposable $kG$-modules that actually occur as direct summands of $\HH^0(X,\Omega_{X/k})$ as soon as the inequality in (\ref{eq:sodumb!}) is satisfied, with no restriction on the genus of $Y=X/G$.
\end{remark}


\section{Appendix: Characteristic two Klein four representation theory}
\label{s:klein4rep}

Let $k$ be an algebraically closed field of characteristic 2 and let $G=\langle \sigma,\tau\rangle$ be isomorphic to $\mathbb{Z}/2\times \mathbb{Z}/2$. 

\begin{nota}
\label{not:indecomposables}
We define the following infinite list of $kG$-modules:
\begin{itemize}
\item[(a)] Let $k$ denote the trivial simple $kG$-module, and let $kG$ denote the free rank one $kG$-module.

\item[(b)] For every positive integer $n$ and every $\lambda\in k\cup\{\infty\}$, let $N_{2n,\lambda}$ be the $kG$-module of $k$-dimension $2n$ such that $\sigma-1$ and $\tau-1$ act as the following matrices with respect to a suitable $k$-basis of $N_{2n,\lambda}$:
\begin{itemize}
\item[$\bullet$] if $\lambda\in k$ then
$$\sigma-1 \;\longleftrightarrow\; \left(\begin{array}{c|c} \mathbf{0}_n&\mathbf{1}_n\\ \hline \mathbf{0}_n&\mathbf{0}_n\end{array}\right)
\qquad\mbox{and}\qquad
\tau-1\;\longleftrightarrow\; \left(\begin{array}{c|c} \mathbf{0}_n&J_n(\lambda)\\ \hline \mathbf{0}_n&\mathbf{0}_n\end{array}\right)  ,$$
\item[$\bullet$] and if $\lambda = \infty$ then
$$\sigma-1 \;\longleftrightarrow\; \left(\begin{array}{c|c} \mathbf{0}_n&J_n(0)\\ \hline \mathbf{0}_n&\mathbf{0}_n\end{array}\right) 
\qquad\mbox{and}\qquad
\tau-1\;\longleftrightarrow\; \left(\begin{array}{c|c} \mathbf{0}_n&\mathbf{1}_n\\ \hline \mathbf{0}_n&\mathbf{0}_n\end{array}\right),$$
\end{itemize}
where $\mathbf{0}_n$ (resp. $\mathbf{1}_n$) is the $n\times n$ zero matrix (resp. the $n\times n$ identity matrix), and $J_n(\lambda)$ is the upper triangular Jordan block of size $n$ with eigenvalue $\lambda$.

\item[(c)] For every positive integer $n$, let $M_{2n+1,1}$ and $M_{2n+1,2}$ be two $kG$-modules of $k$-dimension $2n+1$ such that $\sigma-1$ and $\tau-1$ act as the following matrices with respect to a suitable $k$-basis of $M\in\{ M_{2n+1,1} , M_{2n+1,2}\}$:
\begin{itemize}
\item[$\bullet$] if $M=M_{2n+1,1}$ then
$$\sigma-1 \;\longleftrightarrow\; \left(\begin{array}{c|cc} \mathbf{0}_n&\mathbf{1}_n&\mathbf{0}_{n\times 1}\\ \hline \mathbf{0}_n&\mathbf{0}_n&\mathbf{0}_{n\times 1}\\ \mathbf{0}_{1\times n}&\mathbf{0}_{1\times n} & 0\end{array}\right)
\qquad\mbox{and}\qquad
\tau-1\;\longleftrightarrow\; \left(\begin{array}{c|cc} \mathbf{0}_n&\mathbf{0}_{n\times 1}&\mathbf{1}_n\\ \hline \mathbf{0}_n&\mathbf{0}_{n\times 1}&\mathbf{0}_n\\ \mathbf{0}_{1\times n} & 0&\mathbf{0}_{1\times n}\end{array}\right) ,$$
\item[$\bullet$] and if $M=M_{2n+1,2}$ then
$$\sigma-1 \;\longleftrightarrow\; \left(\begin{array}{cc|c} \mathbf{0}_n&\mathbf{0}_{n\times 1}&\mathbf{1}_n\\ \mathbf{0}_{1\times n}&0&\mathbf{0}_{1\times n}\\\hline \mathbf{0}_n&\mathbf{0}_{n\times 1}&\mathbf{0}_n\end{array}\right)
\qquad\mbox{and}\qquad
\tau-1\;\longleftrightarrow\; \left(\begin{array}{cc|c} \mathbf{0}_n&\mathbf{0}_{n\times 1}&\mathbf{0}_{1\times n}\\ \mathbf{0}_{1\times n}&0&\mathbf{1}_n\\\hline \mathbf{0}_n&\mathbf{0}_{n\times 1}&\mathbf{0}_n\end{array}\right) ,$$
\end{itemize}
where $\mathbf{0}_{n\times 1}$ (resp. $\mathbf{0}_{1\times n}$) is the zero column vector (resp. zero row vector) of length $n$.
\end{itemize}
\end{nota}

\begin{remark}
\label{rem:indecomposables}
Each indecomposable $kG$-module is isomorphic to one of the modules in the list
$$\left\{k,kG\right\} \cup
\left\{N_{2n,\lambda}\;:\;n \in\mathbb{Z}^+, \lambda\in k \cup\{\infty\}\right\} \cup 
\left\{M_{2n+1,1},M_{2n+1,2}\;:\;n \in\mathbb{Z}^+\right\}$$
and no two such modules are isomorphic to each other (see, for example, \cite{Basev1961} or \cite{Conlon1965}). 
\end{remark}

\begin{remark}
\label{rem:01infty}
For each positive integer $n$, there is the following connection between $N_{2n,\infty}$, $N_{2n,0}$ and $N_{2n,1}$. Let $(\rho_1,\rho_2)$ be an ordered pair of two distinct elements of $\{\sigma,\tau,\sigma\circ\tau\}$, i.e. $(\rho_1,\rho_2)$ is an ordered pair of generators of $G$. Consider the $kG$-module $M$ of $k$-dimension $2n$ with actions by $\rho_1-1$ and $\rho_2-1$ given as the following matrices with respect to a suitable $k$-basis of $M$:
$$\rho_1-1 \;\longleftrightarrow\; \left(\begin{array}{c|c} \mathbf{0}_n&J_n(0)\\ \hline \mathbf{0}_n&\mathbf{0}_n\end{array}\right) 
\qquad\mbox{and}\qquad
\rho_2-1\;\longleftrightarrow\; \left(\begin{array}{c|c} \mathbf{0}_n&\mathbf{1}_n\\ \hline \mathbf{0}_n&\mathbf{0}_n\end{array}\right),$$
Since the product of these two matrices is the zero matrix, it follows that $\rho_1\circ\rho_2-1$ acts as 
$$\rho_1\circ\rho_2-1\;\longleftrightarrow\; \left(\begin{array}{c|c} \mathbf{0}_n&J_n(1)\\ \hline \mathbf{0}_n&\mathbf{0}_n\end{array}\right).$$
We obtain:
\begin{itemize}
\item[(a)] If $(\rho_1,\rho_2)=(\sigma,\tau)$ then $M\cong N_{2n,\infty}$.
\item[(b)] If $(\rho_1,\rho_2)=(\tau,\sigma)$ then $M\cong N_{2n,0}$.
\item[(c)] If $(\rho_1,\rho_2)=(\sigma\circ\tau,\tau)$ then $M\cong N_{2n,1}$.
\end{itemize}
\end{remark}

\begin{remark}
\label{rem:cohorts}
A similar phenomenon to the one described in Remark \ref{rem:01infty} occurs for all parameters in $k\cup \{\infty\}$. More precisely, fix a positive integer $n$ and fix $\lambda \in k\cup \{\infty\}$. For any permutation $\xi$ of the set $\{\sigma,\tau,\sigma\circ\tau\}$, define $N_{2n,\lambda}^\xi$ to be the $kG$-module with the same underlying $k$-vector space as $N_{2n,\lambda}$ but on which each non-identity element $g\in G$ acts as $\xi(g)$. Let $\xi_1$ be the permutation interchanging $\sigma$ and $\tau$, and let $\xi_2$ be the permutation interchanging $\tau$ and $\sigma\circ \tau$. Identifying $\frac{1}{0}=\infty$, $\frac{1}{\infty}=0$ and $1+\infty = \infty$, we obtain
$$N_{2n,\lambda}^{\xi_1} \cong N_{2n,\frac{1}{\lambda}}\quad\mbox{and}\quad
N_{2n,\lambda}^{\xi_2} \cong N_{2n,1+\lambda}.$$
Let $S_\lambda$ be the set of all $\mu\in k\cup\{\infty\}$ for which there exists a permutation $\xi$ of $\{\sigma,\tau,\sigma\circ\tau\}$ such that $N_{2n,\lambda}^{\xi}\cong N_{2n,\mu}$. Then 
$$S_\lambda=\left\{\lambda,\frac{1}{\lambda},1+\lambda,\frac{1}{1+\lambda},\frac{\lambda}{1+\lambda},\frac{1+\lambda}{\lambda}\right\}.$$
This set contains precisely 6 elements for all $\lambda\in k-\mathbb{F}_4$. Moreover, 
$$S_0=\{0,1,\infty\} = S_1 = S_\infty,$$
and, if $\mathbb{F}_4^\times = \langle \alpha \rangle$, then 
$$S_{\alpha}=\{\alpha,1+\alpha\} = S_{1+\alpha}.$$
\end{remark}

Fix $(a:b)\in \mathbb{P}^1_k$. Table \ref{tab:succdims} provides, for every indecomposable $kG$-module $U$, the $k$-dimensions of the subquotients $U^{(i+1)}/U^{(i)}$ for $0\le i \le 3$, as defined in Notation \ref{not:operators}.
\begin{table}[ht]
\caption{Dimensions of successive subquotients of indecomposable $kG$-modules.}
\label{tab:succdims}
$\begin{array}{c||c|c|c|c}
U&\mathrm{dim}_k\,U^{(4)}/U^{(3)}&\mathrm{dim}_k\,U^{(3)}/U^{(2)}&\mathrm{dim}_k\,U^{(2)}/U^{(1)}&\mathrm{dim}_k\,U^{(1)}/U^{(0)} \\ \hline\hline
k&0&0&0&1\\ \hline
kG&1&1&1&1 \\ \hline
\begin{array}{c} N_{2n,\lambda}\\\mbox{\footnotesize $(\lambda\in k\cup\{\infty\})$}\end{array}&0&\begin{array}{cl} n-1 & \mbox{if $\lambda =a/b$}\\
n & \mbox{if $\lambda\ne a/b$}\end{array} & \begin{array}{cl} 1 & \mbox{if $\lambda = a/b$}\\
0 & \mbox{if $\lambda \ne a/b$}\end{array} & n \\ \hline
M_{2n+1,1}&0&n&1&n\\ \hline
M_{2n+1,2}&0&n&0&n+1
\end{array}$
\end{table}

\bibliographystyle{plain}

\end{document}